\newcommand{\rmd}{\mathrm{d}}
\newcommand{\rmH}{\mathrm{H}}
\newcommand{\rmL}{\mathrm{L}}
\newcommand{\bbN}{\mathbb{N}}
\newcommand{\bbR}{\mathbb{R}}
\newcommand{\bbZ}{\mathbb{Z}}
\newcommand{\calF}{\mathcal{F}}
\newcommand{\calH}{\mathcal{H}}
\DeclareMathOperator{\Div}{div}
\renewcommand{\div}{\Div}
\DeclareMathOperator{\grad}{grad}
\newcommand{\bs}{{\scriptscriptstyle \bullet}}
\newcommand{\cleq}{\preccurlyeq}
\newcommand{\cgeq}{\succcurlyeq}
\newcommand{\ceq}{\approx}
\newcommand{\beq}{\begin{equation}}
\newcommand{\eeq}{\end{equation}}
\newcounter{point}[section]
\newcommand{\point}{
\paragraph{(\roman{point})}
\stepcounter{point}
}
\newtheorem{theorem}{Theorem}[section]
\newtheorem{lemma}[theorem]{Lemma}
\newtheorem{corollary}[theorem]{Corollary}
\newtheorem{proposition}[theorem]{Proposition}
\theoremstyle{definition}
\theoremstyle{remark}
\newtheorem{remark}{Remark}[section]
\theoremstyle{plain}
\theoremstyle{definition}
\theoremstyle{remark}
\title{Stability of an upwind Petrov-Galerkin discretization of convection diffusion equations
}
\author{Snorre H. Christiansen\footnote{Department of Mathematics, University of Oslo, PO Box 1053 Blindern, NO 0316 Oslo, Norway}, Tore G. Halvorsen, Torquil M. S\o rensen}
\date{}
\begin{document}

\maketitle

\begin{abstract}
We study a numerical method for convection diffusion equations, in the regime of small viscosity. It can be described as an exponentially fitted conforming Petrov-Galerkin method. We identify norms for which we have both continuity and an inf-sup condition, which are uniform in mesh-width and viscosity, up to a logarithm, as long as the viscosity is smaller than the mesh-width or the crosswind diffusion is smaller than the streamline diffusion. The analysis allows for the formation of a boundary layer.
\end{abstract}

\section*{Introduction}

For many fluid flow problems of relevance to engineering, the convective term, hyperbolic in nature, is moderated by a viscous term, elliptic in nature. For large viscosity, Galerkin finite element methods yield good results.  As the viscosity tends to zero, sharp gradients in the fluid velocity as well as boundary layers will appear. When the characteristic length of boundary layers is smaller than the mesh-width, standard Galerkin methods become unstable. This prominent example of a multiscale problem has motivated a large body of work on stabilised methods. For overviews and introductions we refer to \cite{RooStyTob08} and \cite{Mor10}.

This paper is motivated by a general method, introduced in \cite{Chr13FoCM}, that applies to differential forms on arbitrary meshes. It produces differential complexes of finite element spaces that take into account convection. It treats uniformly differential $k$-forms of all degrees $k$. Thus it fits into the framework of finite element exterior calculus (FEEC) \cite{ArnFalWin10}. More precisely it fits into the framework of finite element systems (FES) \cite{Chr09AWM}, which was designed to accommodate polyhedral meshes and quite general basis functions, yet produce discrete spaces equipped with commuting interpolators. So far we have not provided any analysis of this method in the convection dominated regime, even for scalar problems.

For scalar equations on product grids, the method relates to exponential fitting. Variants of exponential fitting can be traced all the way back to \cite{AllSou55} and \cite{SchGum69}. This method can be analysed quite exhaustively in dimension one, for instance because, in model situations, the discrete solution turns out to interpolate the exact one at vertices. However, already in dimension two, in spite of its naturality, the method is hard to analyse, compared with Galerkin methods at large viscosity.

To be more precise, one defines upwinded finite elements that solve \emph{local} problems related to the adjoint equation. Downwinded elements, on the other hand, locally solve problems related to the original equation. In cases where these problems can be solved explicitly one often obtains exponential functions with viscosity dependent parameters, hence the name. In \cite{Dor99b} a Galerkin method with downwinded elements is analysed. We, on the other hand, are interested in a Petrov-Galerkin method with a standard trial space, and an upwinded test space.  Compared with for instance \cite{DemHeu13} we point out that our methods produce conforming spaces.

One of our main sources of inspiration for our stability proof is \cite{BaiBre83}. This paper analyses a parabolic problem with an $\rmH^{1/2}$-norm in time. For our purposes the time-variable corresponds to a space-variable which increases in the direction of the flow. The Hilbert transform, which provides an inf-sup estimate for the convective term, plays a prominent role in our arguments. Many of our arguments use, in intermediate steps, a slightly weaker variant of $\rmH^{1/2}$, a certain critical Besov space containing discontinuous functions. Both projection onto piecewise constants and interpolation onto upwinded functions behave quite well in this norm, as we show.

We point out that also \cite{San08} advocates the use of a $\rmH^{1/2}$-norm for convection diffusion problems. The theory of that paper pertains to a posteriori estimates for one-dimensional problems, but numerical results are reported also for multi-dimensional problems. In \cite{Can06} an anisotropic $\rmH^{1/2}$-norm is also considered, in a multi-dimensional setting. The numerical methods analysed in \cite{Can06} and \cite{San08} (Fourier/Wavelets and SUPG respectively) are quite different from the one we consider here.

For a certain choice of viscosity dependent norms, essentially the anisotropic $\rmH^{1/2}$-norm plus the energy norm, we prove both a continuity estimate and an inf-sup condition, for the discrete method, up to a logarithmic factor in the viscosity. Admittedly the hypothesis required for our proof, essentially that the flow is aligned with the mesh, is very restrictive. On the other hand, we do prove stability under hypotheses that allow for the formation of a boundary layer at the outflow boundary. It is our hope that the paper might give a reasonable idea of what sort of arguments should be improved upon to handle more realistic meshes.

The paper is organized as follows. In \S \ref{sec:setup} we set up the model problem we consider and discuss some numerical results. In \S \ref{sec:parabolic} we provide a study,  based on \cite{BaiBre83}, of some parabolic problems, to motivate our techniques. In \S \ref{sec:cont} we provide continuity estimates, in the norms of interest, for some operators acting on functions of one real variable, in particular projection onto piecewise constants and the nodal interpolator, acting from piecewise affines to upwinded functions. In \S \ref{sec:convdiff} we put our results together to prove an inf-sup condition for convection diffusion problems. An appendix contains our first proof of this result, which was more cumbersome.

\section{\label{sec:setup} Problem setup}

We consider a domain $U$ in $\bbR^n$. On this domain we consider the equation:
\begin{equation}\label{eq:convdiff}
-\alpha \Delta u + \beta \cdot \nabla u +  \gamma u = f.
\end{equation}
The scalar $\alpha >0$ is constant in the domain and will be referred to as viscosity. The vector field $\beta$, responsible for convection, is also constant in the domain, and directed along the first axis. With a slight abuse of notations we take it of the form $\beta e$ for a scalar $\beta > 0$ and $e$ the unit vector along the fist axis. The function $\gamma$ is bounded and non-negative.   The right hand side $f$ is given in $\rmL^2(U)$. We impose homogeneous Dirichlet conditions on $u$. This equation has a unique solution in $\rmH^1_0(U)$, as can be deduced from the Lax-Milgram lemma. 

We are interested in letting the parameter $\alpha$ tend to $0$, all other data remaining fixed. If we let $u_\alpha$ denote the corresponding solution, we know that $u_\alpha$ converges in $\rmL^2(U)$ to some function $u_0$ as $\alpha$ tends to $0$. It then follows that:
\begin{equation}
\alpha \int |\nabla u_\alpha |^2  + \int \gamma |u_\alpha|^2 \to \int f u_0.
\end{equation}
In general therefore the $\rmH^1(U)$-seminorm of $u_\alpha$ blows up. One observes the formation of a boundary layer at the outflow boundary, which is the part of $\partial U$ where $\beta \cdot \nu > 0$, where $\nu$ denotes the outward pointing normal vector on $\partial U$. A boundary layer of a rather different nature appears close to the part of the boundary where $\beta \cdot \nu = 0$. Away from the boundary layers, $u_\alpha$ converges to $u_0$ in strong norms. The limit $u_0$ satisfies the homogeneous boundary condition on the inflow boundary (where $\beta \cdot \nu <0$), but in general not elsewhere.

\begin{figure}
\includegraphics[width=7cm]{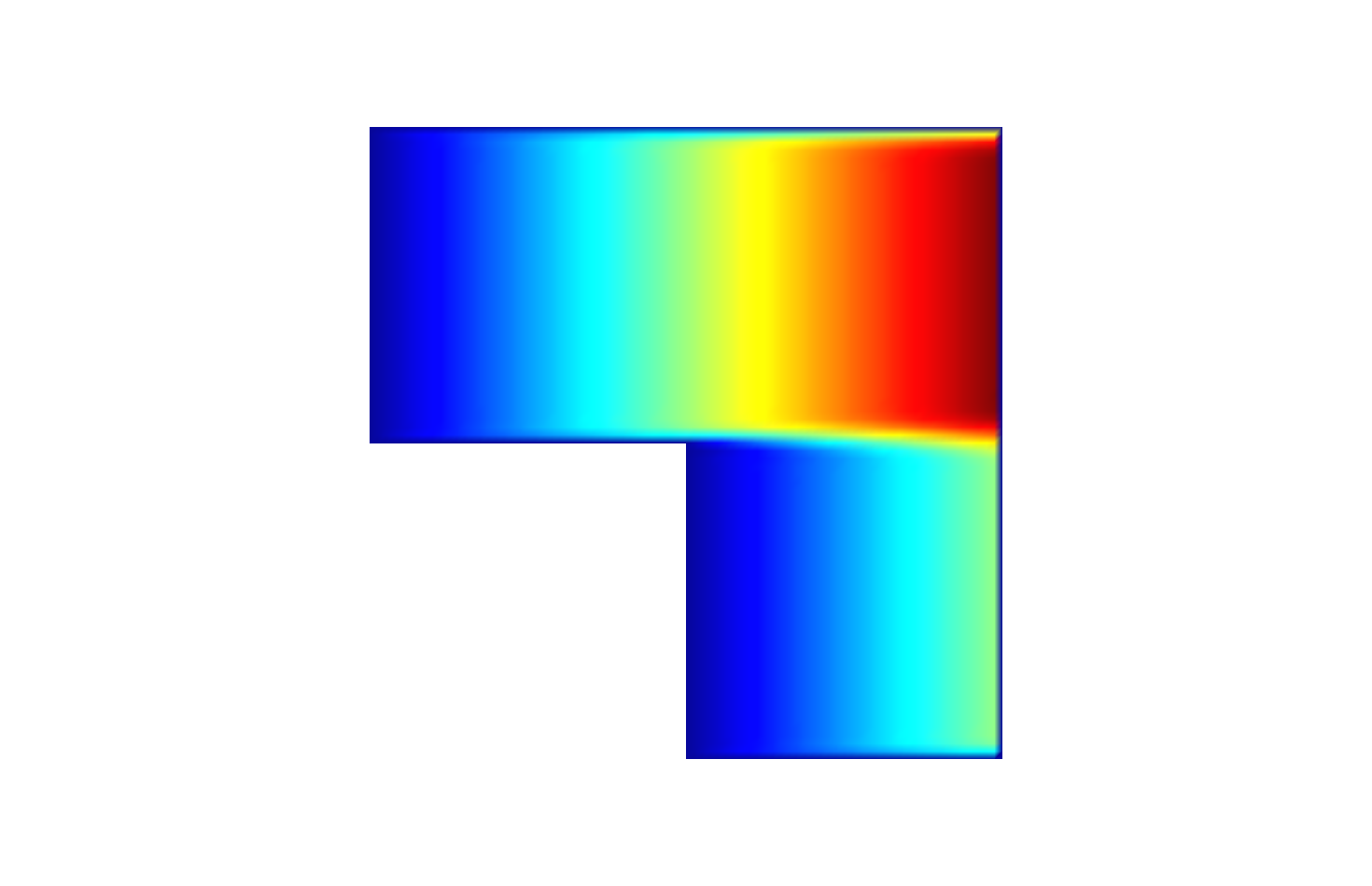}
\includegraphics[width=7cm]{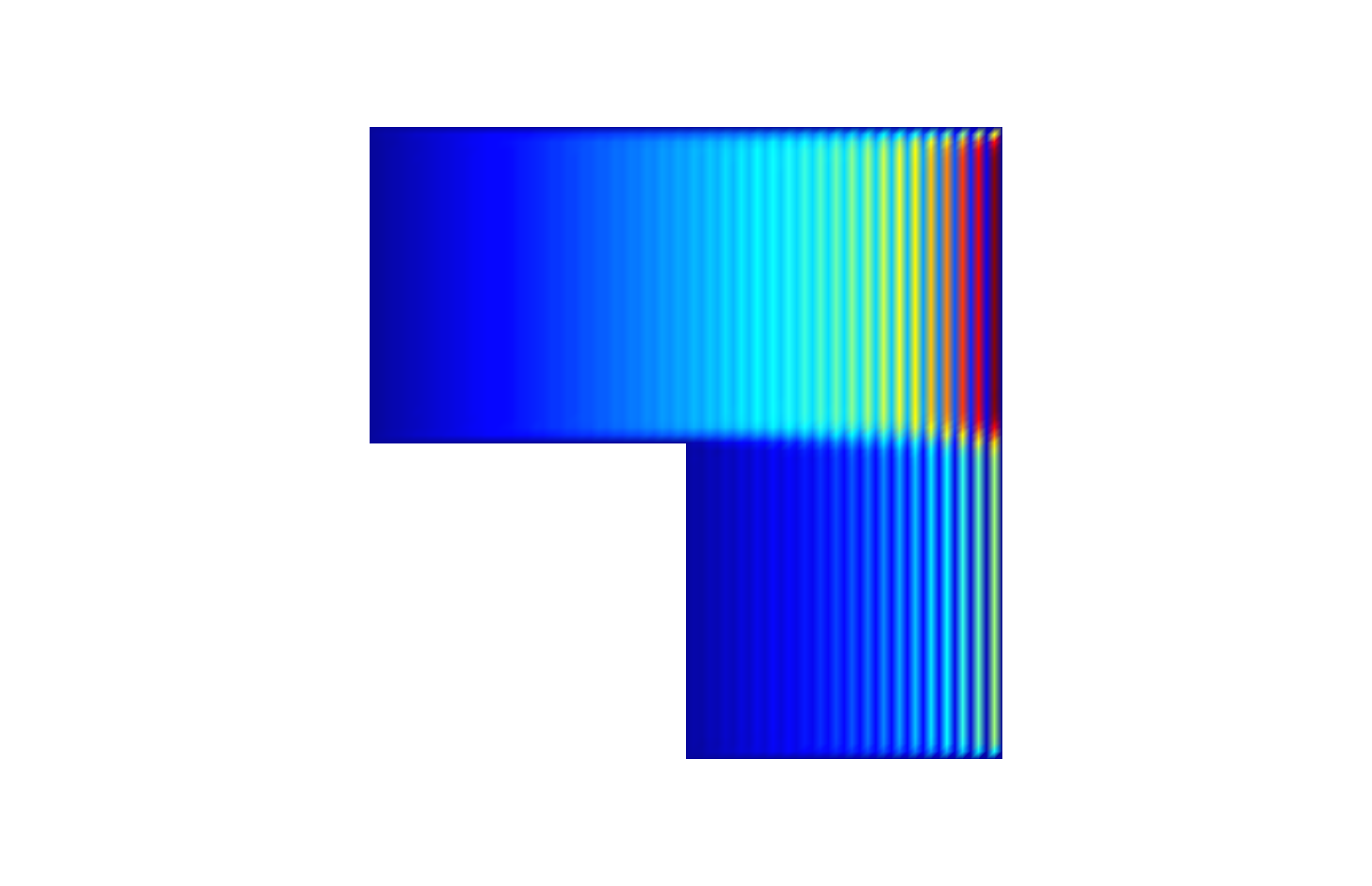}
\caption{\label{fig:wwo} Numerical solution computed with (left) and without (right) our proposed upwinding.}
\end{figure}
In Figure \ref{fig:wwo} numerical results are shown for $f = 1$, $\beta = 1$ and $\gamma = 0$. The width of the domain in the horizontal direction is 1. We chose $\alpha = 3 \times 10^{-4}$. The standard numerical method we use, is a  Galerkin finite element method with continuous $Q_{11}$ finite elements on a square grid of width $\sigma$.  For $\sigma=1/80$ we observe that the numerical solution is very oscillatory (right hand figure). This well-known instability appears whenever the P\'eclet number $\beta \sigma /\alpha$ exceeds 1 (for the displayed figure it is above 40). The upwinded method we propose is a Petrov-Galerkin method with $Q_{11}$ as trial space and a test space we now proceed to describe.

Generally speaking, consider a mesh consisting of cells of various dimensions: vertices (dimension $0$), edges (dimension $1$), faces (dimension $2$), etc. arranged in a cellular complex. For definiteness one can think of simplicial complexes or product grids. Actually only the latter are considered in our numerical experiments and for the stability proof we present in the following sections.
 
For each cell $T$, of any dimension, let $\beta_T$ be the tangential component of $\beta$ on $T$. We construct an upwinded basis function $v$ attached to a given vertex by first assigning the value 1 to this vertex, and the  value 0 to all others. Next we extend recursively, from vertices to edges, from edges to faces, etc, each time solving the equation, on say the cell $T$ (with prescribed boundary values):
\begin{equation}\label{eq:harmext}
-\alpha \Delta_T v - \beta_T \cdot \nabla_T v = 0.
\end{equation}

We may remark that if $\beta = 0$ the method of recursive harmonic extension produces the piecewise affines on simplicial meshes, and the tensor product $Q_{1\ldots 1}$ functions on orthogonal product meshes. On such meshes the obtained functions are simple also in the case where $\beta $ is non-zero (but constant on the domain): On an edge the solutions to (\ref{eq:harmext}) are linear combinations of the constant function and a certain exponential:
\begin{equation}\label{eq:cpe}
v(x)= c_1 +c_2 \exp(-\frac{\beta_T}{\alpha} x).
\end{equation}
If $\beta_T= 0$ one replaces of course the exponential by a linear function. Globally one obtains, from the recursive extension procedure, tensor products of such functions.

We develop our theory for the case when the domain is of the form  $U =  ]0,T[ \times V$ and the mesh is aligned with the first axis, which is also the direction of the vector field $\beta$. In this case we obtain, with the above method, the tensor products of functions which in the direction of the first axis are piecewise of the form (\ref{eq:cpe}), and which in the direction of $V$ are standard $Q_{1...1}$ functions. This defines our upwinded test space. Recall that the trial space is just $Q_{1...1}$.

For applications it is important that the numerical method be able to treat variable $\beta$, not necessarily aligned with the mesh. This is done by replacing (\ref{eq:harmext}) by :
\begin{equation}\label{eq:harmextb}
\div_T \exp(\frac{\beta_T \cdot x}{\alpha}) \grad_T v (x) = 0,
\end{equation}
and solving this equation approximately on a sub-grid. The particular sub-grid we advocate consists in adding one point to each cell of the mesh, and taking the corresponding simplicial refinement. The added points are placed taking into account the expected singular behaviour of the upwinded basis functions. In other words we do a barycentric refinement, where barycenters are computed with weights involving the P\'eclet number $\beta \sigma/\alpha$. The main reason for preferring (\ref{eq:harmextb}) to (\ref{eq:harmext}) is that in this form the \emph{discrete} upwinding method (involving typically an adapted subgrid) extends nicely to differential forms, as explained in \cite{Chr13FoCM}.

In Figure \ref{fig:rel} we have plotted the relative distance between the numerical solutions $u_{ex}$ and $u_{ap}$ obtained with exact  and approximate upwinding respectively, with respect to the norm defined by:
\begin{equation}
\| u\|_{\alpha}^2  = \int |u|^2 + \alpha \int |\nabla u|^2 .
\end{equation}
We first notice that the relative error stays below 0.03 in this experiment, indicating that the use of approximate unwinding does not change too much the computed solution. In the following sections we just analyse the case of exact upwinding.

Interestingly, we also notice that the relative error, between exact and discrete upwinding in the computed solutions, seems to be maximal along a certain line, here given by  $\alpha = 0.15 \sigma$. We also noticed that the relative error between the exact and upwinded basis functions is maximal along such a line, but with a different proportionality constant.

\begin{figure}[htbp]
\begin{center}
\includegraphics[width=12cm]{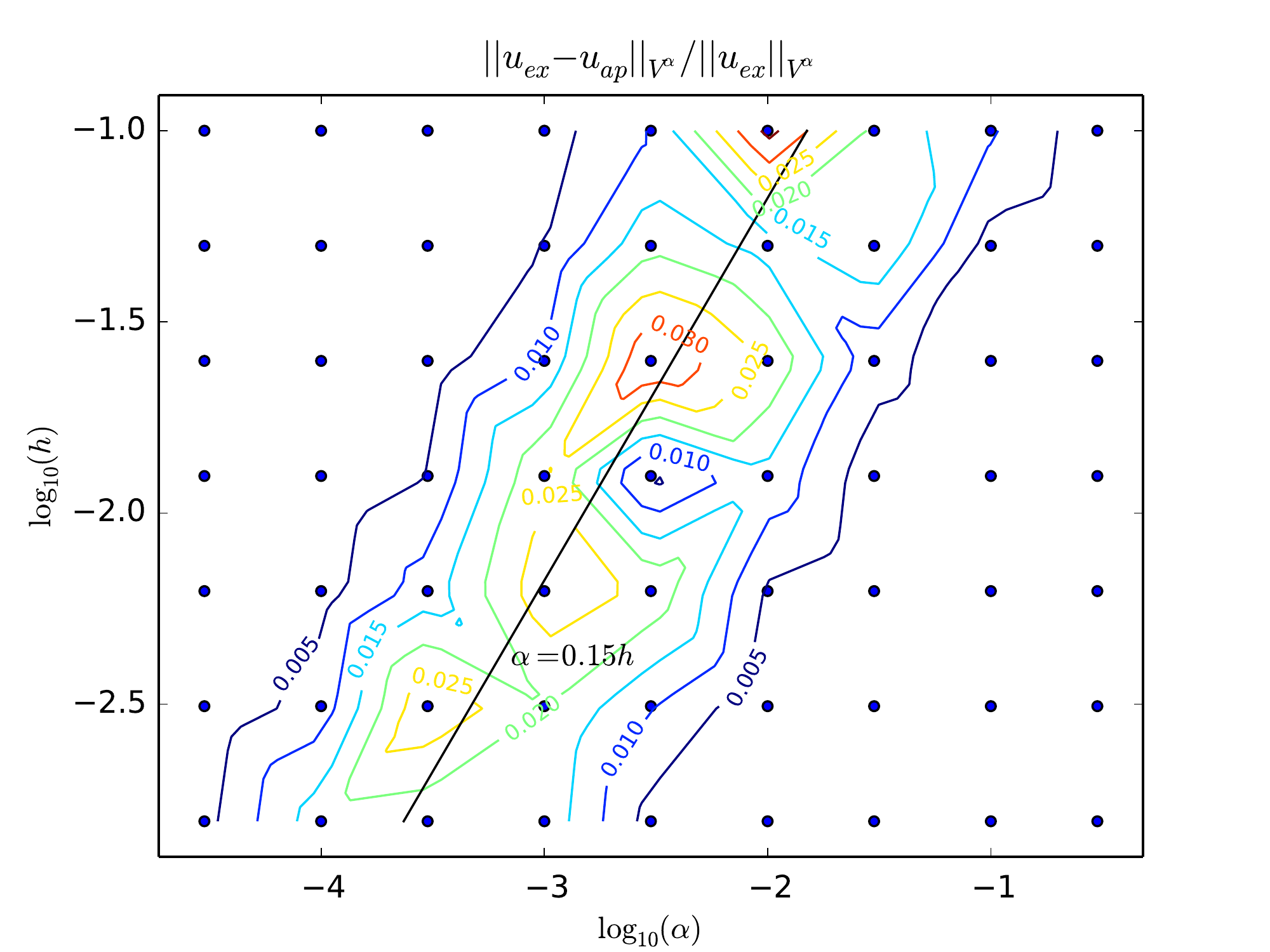}
\end{center}
\caption{\label{fig:rel} Relative distance between the solutions obtained with exact and approximate upwinding}
\end{figure}

\section{\label{sec:parabolic} A study of parabolic problems}

In this section we give our reading of \cite{BaiBre83}. It serves mainly to motivate the techniques of the next sections. Some improvements occur, because in our setting the viscosity is destined to become small, as illustrated by Lemma \ref{lem:equiv}. We shall restrict our attention to the Crank-Nicolson scheme, whereas \cite{BaiBre83} treats some other discretizations as well.

\paragraph{Inf-Sup condition.} We state some ways in which inf-sup conditions may be obtained, for bilinear forms on Hilbert spaces.

\begin{proposition}\label{prop:isa}
Suppose $X$ and $Y$ are Hilbert spaces and that $a:X \times Y \to \bbR$ is a continuous bilinear form.
Suppose we have a continuous linear map $A: X \to Y$ such that:
\begin{align}
\| A u \| &\leq C_1 \| u\|,\\
|a(u, Au)| &\geq \frac{1}{C_2} \|u \|^2.
\end{align}
Then $a$ satisfies the inf-sup condition:
\begin{equation}
\inf_{u \in X} \sup_{v \in Y} \frac{|a(u,v)|}{\| u \| \, \|v\| } \geq \frac{1}{C_1 C_2}.
\end{equation}
\end{proposition}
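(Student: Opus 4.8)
The plan is to exhibit, for each fixed $u \in X$, a test function $v \in Y$ that makes the ratio $|a(u,v)|/(\|u\|\,\|v\|)$ bounded below by $1/(C_1 C_2)$; the natural candidate is $v = Au$. First I would dispose of the trivial case $u = 0$, where the infimum-defining quotient is conventionally $+\infty$ (or one restricts to $u \neq 0$), so assume $u \neq 0$. Then $\|u\| > 0$, and I would like $Au \neq 0$ as well: this follows from the lower bound $|a(u,Au)| \geq \tfrac{1}{C_2}\|u\|^2 > 0$, since $a(u,0) = 0$ by bilinearity, so $Au$ cannot be the zero vector. Hence $v := Au$ is an admissible nonzero test vector.

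Next I would simply estimate the quotient from below with this choice:
\begin{equation}
\sup_{v \in Y} \frac{|a(u,v)|}{\|u\|\,\|v\|} \;\geq\; \frac{|a(u,Au)|}{\|u\|\,\|Au\|} \;\geq\; \frac{\tfrac{1}{C_2}\|u\|^2}{\|u\|\cdot C_1\|u\|} \;=\; \frac{1}{C_1 C_2},
\end{equation}
where the first inequality is just the definition of the supremum over $v$, the numerator is bounded below by the second hypothesis, and the denominator is bounded above using $\|Au\| \leq C_1\|u\|$ from the first hypothesis (here I use $\|u\| > 0$ to divide, and the continuity of $A$ to know $Au \in Y$ so the supremum is legitimately taken over it). Since the right-hand side is independent of $u$, taking the infimum over all $u \in X$ (equivalently all nonzero $u$) preserves the bound, giving the claimed inf-sup inequality.

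There is essentially no obstacle here: the statement is a soft functional-analytic lemma and the entire content is the observation that $Au$ is a good test direction, with the continuity of $a$ never actually needed for the inequality itself (it is presumably listed so that the inf-sup condition has its usual meaning for the pair $(a,X,Y)$, and so that the supremum is attained/finite in applications). The only point requiring a word of care is the non-degeneracy $Au \neq 0$ for $u \neq 0$, handled above via bilinearity; everything else is a one-line chain of inequalities.
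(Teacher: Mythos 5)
Your proof is correct and is essentially identical to the paper's: both take $v = Au$ as the test vector, note $Au \neq 0$ for $u \neq 0$, and combine the lower bound on $|a(u,Au)|$ with the upper bound $\|Au\| \leq C_1\|u\|$ to obtain the inf-sup constant $1/(C_1 C_2)$.
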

\begin{proof}
For non-zero $u \in X$ we have $Au \neq 0$ and we may write:
\begin{align}
\sup_{v \in Y} \frac{|a(u,v)|}{\| v\|} \geq \frac{|a(u, Au)|}{\| A u \|} \geq \frac{1}{C_1 C_2} \|u\|.
\end{align}
The inf-sup estimate follows.
\end{proof}

\begin{proposition}\label{prop:combine}
Suppose we have two Hilbert spaces $X$ and $Y$, and two continuous bilinear forms $a$ and $b$ on $X \times Y$. Suppose we have two continuous operators $A$ and $B$ from $X$ to $Y$ such that for some $C_1 > 0$:
\begin{align}
b(u, B u) + a(u, A u) & \geq \frac{1}{C_1} \| u\|^2.
\end{align}
Suppose moreover that we have the compatibility conditions:
\begin{align}
b(u, A u) & \geq 0, \\
|a(u, B u)| & \leq C_2 a(u, Au).
\end{align}
Then $b+a$ satisfies an inf-sup condition on $X \times Y$.
\end{proposition}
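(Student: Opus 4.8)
The plan is to apply Proposition \ref{prop:isa} to the bilinear form $b+a$, using as the auxiliary map a suitable linear combination $C_\lambda := B + \lambda A : X \to Y$, where $\lambda > 0$ is a parameter to be fixed at the end. Since $A$ and $B$ are continuous, so is $C_\lambda$, and $\| C_\lambda u \| \leq (\|B\| + \lambda \|A\|) \|u\|$; this disposes of the first hypothesis of Proposition \ref{prop:isa}, so the real work is the lower bound on $(b+a)(u, C_\lambda u)$.

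First I would note that the compatibility condition $|a(u, Bu)| \leq C_2\, a(u, Au)$ forces $a(u, Au) \geq 0$ for every $u$, since its left-hand side is nonnegative. Then, expanding,
\begin{equation}
(b+a)(u, C_\lambda u) = b(u, Bu) + a(u, Bu) + \lambda\, b(u, Au) + \lambda\, a(u, Au),
\end{equation}
I would rewrite $b(u, Bu) = \big[b(u, Bu) + a(u, Au)\big] - a(u, Au)$ and invoke the three hypotheses in turn: the coercivity estimate gives $b(u,Bu) + a(u,Au) \geq \tfrac{1}{C_1}\|u\|^2$; the sign condition gives $\lambda\, b(u, Au) \geq 0$; and $|a(u,Bu)| \leq C_2\, a(u,Au)$ gives $a(u, Bu) \geq -C_2\, a(u, Au)$. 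Collecting terms yields
\begin{equation}
(b+a)(u, C_\lambda u) \geq \frac{1}{C_1}\|u\|^2 + (\lambda - 1 - C_2)\, a(u, Au).
\end{equation}

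Now I would choose $\lambda := 1 + C_2$, so the indefinite term involving $a(u, Au)$ vanishes and we retain $(b+a)(u, C_\lambda u) \geq \tfrac{1}{C_1}\|u\|^2$. Applying Proposition \ref{prop:isa} with the form $b+a$, the map $C_\lambda$, and the constants $\|B\| + (1+C_2)\|A\|$ and $C_1$ playing the roles of $C_1$ and $C_2$ there, one obtains an inf-sup condition for $b+a$ on $X \times Y$, with explicit constant $\big(C_1(\|B\| + (1+C_2)\|A\|)\big)^{-1}$.

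There is no genuine obstacle here: the substance of the argument is recognizing that $B + \lambda A$ is the correct test-vector map, and that the hypothesis $|a(u,Bu)| \leq C_2\, a(u,Au)$ makes $a(u,\cdot)$ behave like a positive form along the directions $Au$, so that increasing $\lambda$ only improves the bound beyond the fixed deficit $-(1+C_2)a(u,Au)$ it is designed to cancel. The only thing requiring care is the bookkeeping of the two distinct pairs of constants named $C_1, C_2$ in the two propositions.
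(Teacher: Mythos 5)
Your proof is correct and follows essentially the same route as the paper: test with $B+\lambda A$, expand, use the sign and domination hypotheses to reduce to the coercivity estimate, choose $\lambda = 1+C_2$ (the paper takes $\lambda \geq C_2+1$), and apply Proposition \ref{prop:isa}. Your explicit remark that $a(u,Au)\geq 0$ is forced by the compatibility condition, and your tracking of the final constants, are welcome but minor refinements of the same argument.
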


\begin{proof}

We introduce a parameter $\lambda >0$. We remark that $B + \lambda A: X \to Y$ is continuous and that:
\begin{align}
(b + a)(u, Bu + \lambda A u)  & = b(u, B u) + \lambda b(u, A u) + a(u, B u ) + \lambda a(u, Au ), \\
& \geq b(u, B u) +  \lambda a(u, A u) - |a(u, B u)|,\\
& \geq b(u, B u) + a(u, A u) + (\lambda - C_2 -1)a(u, A u). 
\end{align}
We choose $\lambda \geq C_2 + 1$. Then we apply Proposition \ref{prop:isa}.
\end{proof}

\paragraph{Crank-Nicolson.}

We let $\bbR_+$ denote the set of positive reals. Recall that $\rmH^{1/2}_{00}(\bbR_+)$ denotes the Lions-Magenes space of scalar functions on $\bbR_+$ whose extension by zero to $\bbR$ are in $\rmH^{1/2}(\bbR)$, see chapter 33 in \cite{Tar07}. Such spaces may also be obtained by interpolation techniques and this will play a role in our arguments. We will use the following seminorm on $\rmH^{1/2}(\bbR)$, defined by the Fourier transform, denoted $\calF$:
\begin{equation}
|u|_{\rmH^{1/2}}^2 = \int |\xi| \, |(\calF u)(\xi)|^2 \rmd \xi.
\end{equation}
The full norm on $\rmH^{1/2}(\bbR)$ is obtained by adding the $\rmL^2$-norm.

We let $O$ be a Hilbert space, with scalar product $\langle \cdot, \cdot \rangle$.
Let $X$ be a Hilbert space contained in $O$.
Let $a: X \times X \to \bbR$ be a continuous symmetric bilinear form, which is also coercive.

We also use spaces of Hilbert-space valued functions of a real variable, such as the Bochner space $\rmL^2(\bbR_+, X)$, without  further ado. The real variable will usually be referred to as time. The time-derivative of a function $u$ is denoted $\dot u$.

We define a Hilbert space $Y$ as follows:
\begin{equation}\label{eq:defy}
Y_{00} = \rmH^{1/2}_{00}(\bbR_+, O) \cap \rmL^2(\bbR_+, X).
\end{equation}
As a variant, we also use the space:
\begin{equation}\label{eq:defyvar}
Y = \rmH^{1/2}(\bbR_+, O) \cap \rmL^2(\bbR_+, X).
\end{equation}

Given $f \in Y'$, we are interested in finding $u \in Y_{00}$ such that for all $v$:
\begin{equation}\label{eq:abstractp}
\langle \dot u, v \rangle + a(u,v) = \langle f, v \rangle,
\end{equation}
in some weak sense which may involve integration in time. This is an abstract parabolic equation. The initial condition $u(0)= 0$ is imposed in a weak sense by these hypotheses.

Consider a family of Galerkin spaces $X_\sigma$, included in $X$, where the parameter $\sigma$ is thought of as mesh-width. Given also a time-step $\tau$, the Crank-Nicolson scheme is defined as follows. We let $u: \bbR_+ \to X_\sigma$ be continuous and $\tau$-piecewise affine. We denote $u_i= u(i\tau)$, for $i \in \bbN$. We impose $u(0)= 0$ and, for all $v \in X_\sigma$:
\begin{equation}
\langle \frac{u_{i+1} - u_i}{\tau}, v \rangle + a(u_{i+1/2},v) = \langle f_{i+1/2}, v \rangle.
\end{equation} 
Here we have put:
\begin{equation}\label{eq:uih}
u_{i+1/2}= \frac{1}{2}(u_i + u_{i+1}),
\end{equation}
and:
\begin{equation}\label{eq:fih}
f_{i+1/2}= \frac{1}{\tau}\int_{i \tau}^{(i+1)\tau} f.
\end{equation}
For any function $u$ on $\bbR$, we let $\overline u$ be the function which is $\tau$-piecewise constant, with the same piecewise averages as $u$. In other words $\overline u$ is the $\rmL^2$ projection of $u$ onto the piecewise constants. This generalizes both (\ref{eq:uih}) and (\ref{eq:fih}).

The Crank-Nicolson scheme then yields, for all functions $v: \bbR_+ \to X_\sigma$:
\begin{equation}\label{eq:cncont}
\int \langle \dot u, v \rangle + \int a(\overline u , v) = \int \langle \overline f, v \rangle.
\end{equation}
Notice in particular that, even though $u$ is piecewise affine in time, this identity holds for any time-dependence of $v$ (remaining, say, integrable).

\paragraph{Small abstract viscosity.} We are particularly interested in problems with a parameter $\alpha\in \bbR_+$. That is, the space $X$ is replaced by $X^\alpha$, and $a$ by $a^\alpha$. As a set $X = X^\alpha$, but we suppose that the norm of $X^\alpha$ is equivalent to the one defined by $a^\alpha$, uniformly in $\alpha$. Explicitly, there is a constant $C >0$ such that for all $\alpha$ and all $w \in X$:
\begin{equation}\label{eq:alphaequiv}
\frac{1}{C} \| w\|_\alpha^2 \leq a^\alpha(w,w) \leq C \| w \|_\alpha^2.
\end{equation}
 As $\alpha$ tends to $0$, the norm $\| \cdot \|_\alpha$ converges to the norm on $O$, which is henceforth denoted $\| \cdot \|_0$. We denote by $Y^\alpha_{00}$ and $Y^\alpha$ the corresponding modifications of $Y_{00}$ and $Y$.

In accordance with the interpretation of $\sigma$ as mesh-width for a finite element method, and $\alpha$ as a viscosity parameter, we suppose that the following inverse inequality holds. There exists $C>0$ such that, for all $\alpha$ and $\sigma$, and all $u\in X_\sigma$: 
\begin{equation}
\|u\|_{\alpha} \leq C (1 + \alpha^{1/2}\sigma^{-1}) \| u\|_0.
\end{equation}
For notational convenience, we denote statements of the form $A \leq CB$ for some large enough $C$  independent of the parameters in their natural range, as $A\cleq B$.

\begin{lemma}\label{lem:equiv}
Suppose we have estimates $\tau \cleq \sigma$ and $\alpha \cleq \sigma$. Then we have an estimate, for $u$ $\tau$-piecewise affine with values in $X_\sigma$:
\begin{equation}
| u|_{\rmH^{1/2}(O)}^2 + \| u\|_{\rmL^2(X^\alpha)}^2 \cleq  |u|_{\rmH^{1/2}(O)}^2 + \| \overline u\|_{\rmL^2(X^\alpha)}^2.
\end{equation}
\end{lemma}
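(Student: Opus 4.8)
The plan is to isolate the oscillatory part $u-\overline u$ and to extract a bound on it from the $\rmH^{1/2}$-seminorm via the Gagliardo (Sobolev--Slobodeckij) characterization of $\rmH^{1/2}$. Since $|u|_{\rmH^{1/2}(O)}^2$ occurs on both sides, it suffices to control $\|u\|_{\rmL^2(X^\alpha)}^2$, and by the triangle inequality $\|u\|_{\rmL^2(X^\alpha)}^2\leq 2\|\overline u\|_{\rmL^2(X^\alpha)}^2+2\|u-\overline u\|_{\rmL^2(X^\alpha)}^2$, so the lemma reduces to
\[
\|u-\overline u\|_{\rmL^2(X^\alpha)}^2\cleq |u|_{\rmH^{1/2}(O)}^2 .
\]

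First I would evaluate the left-hand side cell by cell. On $I_i=\,]i\tau,(i+1)\tau[$ the function $u$ is affine with nodal values $u_i,u_{i+1}$, while there $\overline u\equiv u_{i+1/2}=(u_i+u_{i+1})/2$, so $(u-\overline u)(t)=\big((t-i\tau)/\tau-\tfrac12\big)(u_{i+1}-u_i)$; integrating the scalar prefactor over $I_i$ gives
\[
\|u-\overline u\|_{\rmL^2(X^\alpha)}^2=\frac{\tau}{12}\sum_i\|u_{i+1}-u_i\|_\alpha^2 .
\]
Then, since each difference $u_{i+1}-u_i$ lies in $X_\sigma$, the inverse inequality gives $\|u_{i+1}-u_i\|_\alpha^2\cleq (1+\alpha^{1/2}\sigma^{-1})^2\,\|u_{i+1}-u_i\|_0^2$.

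The heart of the argument is to view $\sum_i\|u_{i+1}-u_i\|_0^2$ as a lower portion of the $\rmH^{1/2}$-seminorm. Recall that $|u|_{\rmH^{1/2}(O)}^2$ is equivalent to the Gagliardo double integral $\iint \|u(s)-u(t)\|_O^2\,|s-t|^{-2}\,\rmd s\,\rmd t$ (over $\bbR$ after extension by zero, or over $\bbR_+$; either realization dominates the integral over $\bbR_+\times\bbR_+$, and the integrand is nonnegative). On the diagonal block $I_i\times I_i$ one has the exact identity $\|u(s)-u(t)\|_O=|s-t|\,\tau^{-1}\|u_{i+1}-u_i\|_O$, so restricting the double integral to $\bigcup_i I_i\times I_i$ yields $\sum_i\|u_{i+1}-u_i\|_0^2\leq [u]^2_{\rmH^{1/2}}\cleq |u|_{\rmH^{1/2}(O)}^2$. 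Combining the three estimates,
\[
\|u-\overline u\|_{\rmL^2(X^\alpha)}^2\cleq \tau\,(1+\alpha^{1/2}\sigma^{-1})^2\,|u|_{\rmH^{1/2}(O)}^2,
\]
and it remains to check that $\tau\,(1+\alpha^{1/2}\sigma^{-1})^2=\big(\tau^{1/2}+(\tau\alpha)^{1/2}\sigma^{-1}\big)^2\cleq 1$: here $\tau\cleq\sigma$ (with $\sigma$ bounded above) keeps $\tau^{1/2}$ bounded, while $\tau\cleq\sigma$ together with $\alpha\cleq\sigma$ gives $(\tau\alpha)^{1/2}\sigma^{-1}\cleq 1$. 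This is exactly where both hypotheses of the lemma enter.

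I expect the only delicate point to be the middle paragraph: invoking the Slobodeckij representation of $\rmH^{1/2}$ in the Bochner (Hilbert-space-valued) setting, and reconciling it with whichever precise realization of $\rmH^{1/2}$ --- $\rmH^{1/2}$, $\rmH^{1/2}_{00}$, on $\bbR$ or on $\bbR_+$ --- is in force; in every case the diagonal blocks lie inside $\bbR_+\times\bbR_+$, so the one-sided bound $\sum_i\|u_{i+1}-u_i\|_0^2\leq [u]^2_{\rmH^{1/2}}$ holds regardless. The remaining steps --- the cellwise computation, the application of the inverse inequality, and the bookkeeping with $\tau$, $\alpha$, $\sigma$ --- are routine.
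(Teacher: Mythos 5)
Your proof is correct, and it follows the same backbone as the paper's: both reduce the lemma, via the triangle inequality, to the bound $\|u-\overline u\|_{\rmL^2(X^\alpha)} \cleq \tau^{1/2}(1+\alpha^{1/2}\sigma^{-1})\,|u|_{\rmH^{1/2}(O)}$ and then absorb the prefactor using $\tau \cleq \sigma$, $\alpha \cleq \sigma$ (and boundedness of $\tau$). The difference is in how that intermediate bound is obtained: the paper invokes, without proof, the generic approximation estimate $\|u-\overline u\|_{\rmL^2(X^\alpha)} \cleq \tau^{1/2}|u|_{\rmH^{1/2}(X^\alpha)}$ followed by the inverse inequality applied inside the Bochner-valued seminorm, whereas you never form $|u|_{\rmH^{1/2}(X^\alpha)}$: you exploit the piecewise-affine structure to get the exact identity $\|u-\overline u\|_{\rmL^2(X^\alpha)}^2=\tfrac{\tau}{12}\sum_i\|u_{i+1}-u_i\|_\alpha^2$, apply the inverse inequality cellwise to the nodal differences, and bound $\sum_i\|u_{i+1}-u_i\|_0^2$ by restricting the Slobodetski double integral to the diagonal blocks $I_i\times I_i$ (where the block integral equals $\|u_{i+1}-u_i\|_0^2$ exactly). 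This buys a self-contained argument needing only the $O$-valued Gagliardo characterization and positivity of the integrand — including the correct handling of the half-line versus whole-line realizations — at the cost of being specific to piecewise affines, which is all the lemma requires; the paper's version is shorter but leans on standard approximation theory in the Hilbert-space-valued $\rmH^{1/2}$ setting.
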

\begin{proof}
We have, using first an approximation estimate and then the above inverse inequality:
\begin{align}
\| u - \overline u\|_{\rmL^2(X^\alpha)} & \cleq \tau^{1/2} |u|_{\rmH^{1/2}(X^\alpha)},\\
&\cleq \tau^{1/2}(1+\alpha^{1/2}\sigma^{-1})|u|_{\rmH^{1/2}(O)},\\
&\cleq |u|_{\rmH^{1/2}(O)}.
\end{align}
From this, the announced estimate follows.
\end{proof}

\paragraph{Stability of Crank-Nicolson.} We now derive a stability estimate for the Crank-Nicolson scheme for parabolic problems, reformulated by (\ref{eq:cncont}). 

For this purpose we will use the Hilbert transform, which is convolution by the kernel function $x \mapsto 1/x$. Since the kernel function is non-integrable, the definition of the Hilbert transform on test functions involves taking a principal value. The Fourier transform of the kernel function is some constant times the sign function. Multiplying the Hilbert transform by a suitable constant, we obtain an operator $\calH$ satisfying the following crucial identity:
\begin{equation}
\int \dot u \calH u = |u|^2_{\rmH^{1/2}}. 
\end{equation}
Various continuity properties of the Hilbert transform will also be used, for which we refer to \cite{Tar07}. In particular it is continuous inside Hilbert spaces $\rmH^s(\bbR)$ for all $s$. Henceforth we take the liberty of calling $\calH$ the Hilbert transform. 

Following \cite{BaiBre83}, we intend to apply a variant of Proposition \ref{prop:combine}, but we consider the regime $\tau \cleq \sigma$ and $\alpha \cleq \sigma$, where we may use Lemma \ref{lem:equiv}. Recall that the proposition features two Hilbert spaces $X$ and $Y$ and two operators $A$ and $B$ from $X$ to $Y$. The space $X$ will be the one defined by $(\ref{eq:defy})$, whereas $Y$ will be defined by  (\ref{eq:defyvar}). The operator $A$ will be the identity and $B$ the Hilbert transform (composed with restriction to the half-line). The bilinear forms $b$ and $a$ in that proposition will correspond to the first term and second term on the left hand side of (\ref{eq:abstractp}), integrated in time. 

Considering now an element $u$ of the space defined in $(\ref{eq:defy})$ we define a candidate for optimal test function $v = \calH u + \lambda u$, for some large enough $\lambda >0$ to be determined.. We suppose that $u$ is piecewise affine with values in $X_\sigma$. Then $v$ also takes values in $X_\sigma$. For the bilinear form appearing on the left hand side of $(\ref{eq:cncont})$ we get:
\begin{align}
\int \langle \dot u, v \rangle + \int a^\alpha(\overline u , v) \geq
| u |_{\rmH^{1/2}(O)}^2 + \lambda \int a^\alpha( \overline u , \overline u) - \int |a^\alpha(\overline u , \calH u)|. 
\end{align}
We have a continuity estimate, for any $\epsilon >0$:
\begin{align}
\int |a^\alpha(\overline u , \calH u)|  & \cleq  \|\overline u\|_{\rmL^2(X^\alpha)} \| \calH u\|_{\rmL^2(X^\alpha)},\\
& \leq C \|\overline u\|_{\rmL^2(X^\alpha)} \| u\|_{\rmL^2(X^\alpha)},\\
& \leq  \frac{C}{2 \epsilon}  \|\overline u\|_{\rmL^2(X^\alpha)}^2 + \frac{ C\epsilon }{2 }\| u\|_{\rmL^2(X^\alpha)}^2.
\end{align}
So we get, using Lemma \ref{lem:equiv}:
\begin{align}
\int \langle \dot u, v \rangle + \int a^\alpha(\overline u , v)  \geq &\frac{1}{C'} \left(| u|_{\rmH^{1/2}(O)}^2 + \| u\|_{\rmL^2(X^\alpha)}^2\right) +\\
& (\lambda -1) \int a^\alpha( \overline u , \overline u) -  \frac{C }{2 \epsilon}  \|\overline u\|_{\rmL^2(X^\alpha)}^2 - \frac{ C\epsilon }{2 }\| u\|_{\rmL^2(X^\alpha)}^2. \label{eq:seclin}
\end{align}
To control the last term, choose $\epsilon$ so small that :
\begin{equation}
\frac{ C\epsilon }{2 } < \frac {1}{C'}.
\end{equation}
Then, to handle the two other terms on line (\ref{eq:seclin}), choose $\lambda$ so big that:
\begin{equation}
(\lambda -1)  > \frac{C}{2 C'' \epsilon},
\end{equation}
where $C''$ is the constant appearing in $(\ref{eq:alphaequiv})$.

For such a choice of $\epsilon$ and $\lambda$, we get an estimate:
\begin{equation}
\int \langle \dot u, v \rangle + \int a^\alpha(\overline u , v) \cgeq  | u|_{\rmH^{1/2}(O)}^2 + \| u\|_{\rmL^2(X^\alpha)}^2.
\end{equation}
Together with (\ref{eq:cncont}) this gives a stability estimate for the Crank-Nicolson scheme:
\begin{equation}
\| u\|_{Y_{00}^\alpha}^2 = |u|_{\rmH^{1/2}(O)}^2 + \| u\|_{\rmL^2(X^\alpha)}^2 \cleq \| \overline f\|^2_{(Y^{\alpha})'}.
\end{equation}
Notice the somewhat annoying fact that on the right hand side, we have $\overline f$ where we would have preferred $f$. The operation $f \to \overline f$ is well defined on the (non-closed) subspace $\rmL^2(\bbR_+, (X^{\alpha})')$ of $(Y^{\alpha})'$. However it is not well defined inside $\rmH^{1/2}(\bbR_+)$, nor its dual.

\begin{remark}
In \cite{BaiBre83}, which treats the case of fixed $\alpha$, the obtained stability estimate concerns the discrete ($\tau$-dependent) norm with square:
\begin{equation}
|u|_{\rmH^{1/2}(O)}^2 + \| \overline u\|_{\rmL^2(X)}^2.
\end{equation}
The appearance of $\overline u$ makes this norm slightly weaker. The interpretation is that some oscillations in time are not so well controlled in $X$-norm, even though the first term controls them in $O$-norm.
\end{remark}

\section{\label{sec:cont} Stability estimates of some operators}

In this section we prove uniform continuity estimates for the operators we use in our stability proofs. These operators act on functions of a real variable. We will use several variants of the space $\rmH^{1/2}(\bbR)$, where the functions take values in Hilbert spaces. We refer to \cite{Tar07} (especially chapter 35) for definitions pertaining to scalar-valued functions. Until now we have used the characterisation with the Fourier transform, but now we will also use the Slobodetski seminorm, as it appears in particular in Lemma 35.2 in \cite{Tar07}. Specifically, on an interval $I$, for $s\in ]0,1[$:
\begin{equation}
| u |_{\rmH^{s}(I)}^2  = \iint_{I \times I} \frac{| u(x + y) - u(x)|^2}{| y|^{1 + 2 s}} \rmd x \rmd y.
\end{equation}
For $I = \bbR$ and for $s$ in a compact subset of $]0,1[$, this seminorm is uniformly equivalent to the one defined by Fourier transform.

We will use also the space $\rmH^{1/2}_w(\bbR)$ consisting of functions $u \in  \rmL^2(\bbR)$ such that for some $C \geq 0$, we have, for all $y \in \bbR$:
\begin{equation} 
\| u - \tau_y u\|_{\rmL^2} \leq C |y|^{1/2}.
\end{equation}
Here, $\tau_y$ denotes translation by the vector $y$. The best constant $C$ in this estimate defines a seminorm, denoted:
\begin{equation}
| u|_{\rmH^{1/2}_w} = C.
\end{equation}
The subscript $w$ stands for "weak", reflecting that the Banach space $\rmH^{1/2}_w(\bbR)$ is slightly bigger than the Hilbert space $\rmH^{1/2}(\bbR)$. The space $\rmH^{1/2}_w(\bbR)$ is nothing but the Besov space $B^{1/2, 2}_\infty(\bbR)$, see for instance Lemma 35.1 in \cite{Tar07}. We will use that it is big enough to contain piecewise constant functions. On the other hand it is small enough to be included in all the spaces $\rmH^{1/2-\epsilon}(\bbR)$ for $\epsilon >0$.

We will use two methods to obtain upwinded functions:
\begin{itemize}
\item First $\rmL^2$-project onto piecewise constants and then convolve with $G_\alpha$, which will be defined later. We notice that values at extremities will not be zero in general, so this needs to be taken care of.
\item Interpolate the values at vertices.
\end{itemize}
We now analyse the stability of these two methods, one after the other.

\paragraph{Projection onto piecewise constants and convolution.}

\begin{proposition}\label{prop:bar}
Consider the map $u \mapsto \overline u$, which $\rmL^2$-projects onto $\tau$-piecewise constants. It is bounded from $\rmH^{1/2}(\bbR)$ to $\rmH^{1/2}_w(\bbR)$, uniformly in $\tau$.
\end{proposition}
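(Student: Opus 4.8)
The natural strategy is to work with the Slobodetski (double-integral) seminorm for $\rmH^{1/2}$ on the domain side and the translation-modulus characterisation $|{\cdot}|_{\rmH^{1/2}_w}$ on the target side. So I would fix $u \in \rmH^{1/2}(\bbR)$ and $y \in \bbR$, and estimate $\|\overline u - \tau_y \overline u\|_{\rmL^2}$ by a constant times $|y|^{1/2} |u|_{\rmH^{1/2}}$, plus of course the trivial bound $\|\overline u\|_{\rmL^2} \leq \|u\|_{\rmL^2}$ for the $\rmL^2$ part of the norm. The whole point is that $u\mapsto\overline u$ is a contraction in $\rmL^2$, so the only thing to control is the oscillation.

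\textbf{Key steps.} First I would reduce to the case $0 < y \leq \tau$ (or even $0<y\le\tau/2$): if $y$ is an integer multiple of $\tau$, then $\tau_y$ commutes with $\overline{(\cdot)}$ exactly and one gets $\|\overline u - \tau_y\overline u\|_{\rmL^2} = \|\overline{u - \tau_y u}\|_{\rmL^2} \leq \|u - \tau_y u\|_{\rmL^2} \leq |u|_{\rmH^{1/2}_w}|y|^{1/2}$ — wait, that uses $u\in\rmH^{1/2}_w$, which is fine since $\rmH^{1/2}\subset\rmH^{1/2}_w$, but I actually want the constant in terms of $|u|_{\rmH^{1/2}}$; since $|u|_{\rmH^{1/2}_w}\cleq|u|_{\rmH^{1/2}}$ this is acceptable. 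For a general $y$ write $y = k\tau + y'$ with $0 \le y' < \tau$ and combine the integer-shift case with the small-shift case via the triangle inequality (noting $\|\overline{\tau_{k\tau}u} - \tau_y\overline u\|$ type terms reduce back to the $y'$ case). So the crux is $0<y<\tau$. Here, on each cell $J = [j\tau,(j+1)\tau]$, the function $\overline u - \tau_y\overline u$ is piecewise constant taking at most two values on $J$ (coming from the averages of $u$ over $J$ and over the neighbouring cell $J - \tau$ or $J+\tau$), so its $\rmL^2$-norm squared over $J$ is controlled by $y$ (the length of the region where the two differ) times the squared difference of consecutive cell-averages of $u$. Thus
\begin{equation}
\|\overline u - \tau_y \overline u\|_{\rmL^2}^2 \cleq y \sum_j \left| \fint_{J_j} u - \fint_{J_{j-1}} u \right|^2 ,
\end{equation}
where $\fint$ denotes the average. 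The remaining task is to bound $\sum_j |\fint_{J_j}u - \fint_{J_{j-1}}u|^2$ by $|u|_{\rmH^{1/2}}^2 / \tau$. Writing the difference of averages as a double average $\fint_{J_j}\fint_{J_{j-1}}(u(s)-u(t))\,\rmd s\,\rmd t$, Jensen gives $|\fint_{J_j}u - \fint_{J_{j-1}}u|^2 \le \fint_{J_j}\fint_{J_{j-1}}|u(s)-u(t)|^2$, and since $|s-t|\le 2\tau$ on $J_j\times J_{j-1}$ one has $|s-t|^{-2} \ge (2\tau)^{-2}$, hence $|u(s)-u(t)|^2 \le (2\tau)^2 |u(s)-u(t)|^2/|s-t|^2$. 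Integrating and summing over $j$ (the products $J_j\times J_{j-1}$ are disjoint and contained in $\{|s-t|\le2\tau\}$) yields $\sum_j|\fint_{J_j}u-\fint_{J_{j-1}}u|^2 \cleq \tau^{-2}\cdot\tau^2\cdot\frac{1}{\tau}\iint_{|s-t|\le2\tau}\frac{|u(s)-u(t)|^2}{|s-t|}\cdot\frac{\rmd s\,\rmd t}{|s-t|}$ — more carefully, tracking powers, one gets $\sum_j |\fint_{J_j}u-\fint_{J_{j-1}}u|^2 \cleq \tau^{-1}|u|_{\rmH^{1/2}}^2$ because the $\rmH^{1/2}$ Slobodetski integrand carries weight $|s-t|^{-2}$. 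Combining with the displayed inequality gives $\|\overline u - \tau_y\overline u\|_{\rmL^2}^2 \cleq (y/\tau)|u|_{\rmH^{1/2}}^2 \le |u|_{\rmH^{1/2}}^2$, but I want $\cleq y \cdot(\text{const})$ uniformly in $\tau$; indeed since $y<\tau$ I should instead keep $\|\overline u-\tau_y\overline u\|^2_{\rmL^2}\cleq y\cdot\tau^{-1}|u|^2_{\rmH^{1/2}}\le |u|^2_{\rmH^{1/2}}$, and then observe this is $\le |y|^{1/2}\cdot|y|^{1/2}\tau^{-1}|u|^2_{\rmH^{1/2}}\cleq|y|^{1/2}\cdot(\ldots)$ — the cleaner route is: for $y<\tau$, $\|\overline u-\tau_y\overline u\|_{\rmL^2}^2 \cleq (y/\tau)|u|^2_{\rmH^{1/2}} \le (y/\tau)^{1/2}\cdot(y/\tau)^{1/2}|u|^2_{\rmH^{1/2}}$, and since $y/\tau\le1$ this is $\le (y/\tau)^{1/2}|u|^2_{\rmH^{1/2}}\le y^{1/2}\tau^{-1/2}|u|^2_{\rmH^{1/2}}$; but $\tau$ is not bounded below, so to get the $\tau$-uniform statement I should rather not lose the full power — in fact the correct clean bound already obtained, $\|\overline u-\tau_y\overline u\|^2_{\rmL^2}\cleq(y/\tau)|u|^2_{\rmH^{1/2}}$ for $0<y<\tau$, combined with the integer-shift bound $\cleq|u|^2_{\rmH^{1/2}_w}|y|$ for $y\ge\tau$, and interpolating/taking the max, gives $\|\overline u-\tau_y\overline u\|^2_{\rmL^2}\cleq|y|\cdot|u|^2_{\rmH^{1/2}}$ uniformly in $\tau$, which is exactly $|\overline u|_{\rmH^{1/2}_w}\cleq|u|_{\rmH^{1/2}}$.

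\textbf{Main obstacle.} The delicate point is obtaining the $\tau$-\emph{uniform} constant: the estimate for small shifts naturally produces a factor $y/\tau$, and one must see that this is still $\cleq y\cdot\text{const}$ with the constant independent of $\tau$ — which works precisely because $y/\tau \le 1$ in that regime, so $y/\tau \le (y/\tau)^{1/2}\le \ldots$; one has to be careful to package the small-shift and large-shift regimes so that the powers of $\tau$ cancel and one lands on the homogeneous bound $|\overline u - \tau_y\overline u|_{\rmL^2}^2\cleq |y|\,|u|^2_{\rmH^{1/2}}$. A secondary technical point is the bookkeeping of the sum $\sum_j|\fint_{J_j}u-\fint_{J_{j-1}}u|^2$ and verifying the products of consecutive cells tile the strip $\{|s-t|\lesssim\tau\}$ with bounded overlap; this is routine but must be done to pin down constants. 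Everything else — the $\rmL^2$-contraction property, Jensen, reduction to $0<y<\tau$ — is straightforward.
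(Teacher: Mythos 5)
Your overall route is the same as the paper's: split into the regimes $|y|<\tau$ and $|y|\geq\tau$, observe that for small shifts $\overline u-\tau_y\overline u$ is supported on intervals of length $|y|$ next to the knots, with values equal to the jumps of $\overline u$, i.e.\ differences of consecutive cell averages, and control those jumps by local $\rmH^{1/2}$ seminorms. (The paper obtains the jump bound by a reference-interval-and-scaling argument and handles $|y|\cgeq\tau$ via the approximation estimate $\|u-\overline u\|_{\rmL^2}\cleq\tau^{1/2}|u|_{\rmH^{1/2}}$; your Jensen argument and your reduction via commutation with shifts by integer multiples of $\tau$ are equivalent in substance.)

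As written, though, your small-shift regime contains a genuine flaw: the intermediate claim $\sum_j|\,\cdot\,|^2\cleq\tau^{-1}|u|^2_{\rmH^{1/2}}$ comes from mis-tracking powers, and the attempted repair at the end does not work. Your own Jensen computation, done carefully, gives a $\tau$-free bound: writing $a_j=\frac{1}{\tau}\int_{J_j}u$,
\begin{equation}
|a_j-a_{j-1}|^2\leq\frac{1}{\tau^2}\iint_{J_j\times J_{j-1}}|u(s)-u(t)|^2\,\rmd s\,\rmd t\leq\frac{(2\tau)^2}{\tau^2}\iint_{J_j\times J_{j-1}}\frac{|u(s)-u(t)|^2}{|s-t|^2}\,\rmd s\,\rmd t,
\end{equation}
and since the rectangles $J_j\times J_{j-1}$ are pairwise disjoint, $\sum_j|a_j-a_{j-1}|^2\leq 4\,|u|^2_{\rmH^{1/2}}$; note both sides are dilation-invariant, so no power of $\tau$ can legitimately appear. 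Combined with your identity $\|\overline u-\tau_y\overline u\|^2_{\rmL^2}=y\sum_j|a_j-a_{j-1}|^2$ for $0<y<\tau$ (indeed an equality), this yields exactly the homogeneous bound $\cleq y\,|u|^2_{\rmH^{1/2}}$ that the proposition requires. By contrast, the bound $(y/\tau)\,|u|^2_{\rmH^{1/2}}$ you arrive at is weaker by the unbounded factor $\tau^{-1}$, and your concluding assertion that combining it with the integer-shift bound and ``interpolating/taking the max'' gives $\cleq|y|\,|u|^2_{\rmH^{1/2}}$ uniformly in $\tau$ is a non sequitur: for $y=\tau/2$ with $\tau$ small, $y/\tau=1/2$ while $y$ is arbitrarily small, so the desired estimate does not follow from the ones you state. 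With the corrected constant-tracking the argument closes and coincides in substance with the paper's proof; the large-shift case, using $\rmH^{1/2}\subset\rmH^{1/2}_w$ and translation invariance of the $\rmL^2$ norm, is fine as you wrote it.
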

\begin{proof}
In this proof we use the Slobodetski seminorm on $\rmH^{1/2}(I) = \rmH^{1/2}(a,b) $, for various intervals $I= ] a, b[$. We let $P_\tau: u \mapsto \overline u$ denote the $\rmL^2$ projection onto $\tau$-piecewise constant functions.

On the reference interval $]-1, 1 [$ we have an estimate, for the jump at $0$:
\begin{equation}
|(P_1 u) (0+) - (P_1 u)(0-)| \cleq |u|_{ \rmH^{1/2}(-1,1)}.
\end{equation}
Scaling to the interval $]-\tau, \tau [$ one notices that the two sides scale in the same way, so we get:
\begin{equation}
|(P_\tau u) (0+) - (P_\tau u)(0-)| \cleq |u|_{ \rmH^{1/2}(-\tau, \tau)}.
\end{equation}
For $|y| < \tau$ we have:
\begin{align}
\| P_\tau u - \tau_y P_\tau u \|_{\rmL^2}^2  & \cleq \sum_{k \in \bbZ} |y| \, |(P_\tau u)(k\tau+) - (P_\tau u )(k \tau-)|^2,\\
& \cleq |y| \sum_{k \in \bbZ} |u|_{\rmH^{1/2}((k-1)\tau, (k+1)\tau )}^2,\\
& \cleq |y| \, |u|_{\rmH^{1/2}(\bbR)}^2.
\end{align}
For $|y| \cgeq \tau$ we have:
\begin{align}
\| P_\tau u - \tau_y P_\tau u \|_{\rmL^2} & \leq \| P_\tau u -u \|_{\rmL^2} + \|u -\tau_y u \|_{\rmL^2} + \| \tau_y u - \tau_y P_\tau u \|_{\rmL^2},\\
& \cleq (\tau^{1/2} + |y|^{1/2} ) |u|_{\rmH^{1/2}},\\
& \cleq |y|^{1/2}  |u|_{\rmH^{1/2}}.
\end{align}
Together these two estimates conclude the proof.
\end{proof}

\begin{proposition}\label{prop:wh}
Consider the canonical injection of $\rmH^{1/2}_w$ into $\rmH^{1/2 - \epsilon}$. Its norm is of order $1/ \epsilon^{1/2}$ for small $\epsilon$. 
\end{proposition}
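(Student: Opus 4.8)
The plan is to compute the Slobodetski seminorm of order $1/2-\epsilon$ of a function $u \in \rmH^{1/2}_w(\bbR)$ directly, splitting the double integral over $|y| < 1$ and $|y| \geq 1$. The key quantitative input is the defining inequality $\| u - \tau_y u\|_{\rmL^2}^2 \leq |u|_{\rmH^{1/2}_w}^2 \, |y|$, which bounds the inner $\rmL^2$-norm that appears when we write
\begin{equation}
| u |_{\rmH^{1/2-\epsilon}(\bbR)}^2 = \int_{\bbR} \frac{\| u - \tau_y u\|_{\rmL^2}^2}{|y|^{2 - 2\epsilon}} \, \rmd y.
\end{equation}
For the near-diagonal part $|y| < 1$ we substitute the weak bound to get an integrand dominated by $|u|_{\rmH^{1/2}_w}^2 \, |y|^{-1 + 2\epsilon}$, whose integral over $]-1,1[$ equals $|u|_{\rmH^{1/2}_w}^2 \cdot \tfrac{2}{2\epsilon} = |u|_{\rmH^{1/2}_w}^2/\epsilon$. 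This is exactly where the $1/\epsilon$ blow-up (hence the $1/\epsilon^{1/2}$ in norm) comes from.

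For the far part $|y| \geq 1$ we instead use the trivial bound $\| u - \tau_y u\|_{\rmL^2}^2 \leq 4 \|u\|_{\rmL^2}^2$, so the integrand is dominated by $4\|u\|_{\rmL^2}^2 |y|^{-2+2\epsilon}$, which integrates (for $\epsilon$ small, say $\epsilon \leq 1/4$) to something of order $\|u\|_{\rmL^2}^2/(1-2\epsilon) \cleq \|u\|_{\rmL^2}^2$, with a constant uniform in $\epsilon$ on that range. Adding the two contributions, together with $\|u\|_{\rmL^2}$ itself for the full norm, gives $\| u \|_{\rmH^{1/2-\epsilon}}^2 \cleq \tfrac{1}{\epsilon}\, |u|_{\rmH^{1/2}_w}^2 + \|u\|_{\rmL^2}^2 \cleq \tfrac{1}{\epsilon}\, \|u\|_{\rmH^{1/2}_w}^2$, i.e. the injection norm is $\bigo(\epsilon^{-1/2})$.

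One subtlety to address is that the Slobodetski seminorm on $\bbR$ is only equivalent to the Fourier-defined $\rmH^{1/2-\epsilon}$ norm with constants that themselves could degenerate as $\epsilon \to 0$ (the remark in the excerpt only claims uniform equivalence for $s$ in a \emph{compact} subset of $]0,1[$). To make the statement clean I would either (a) phrase the conclusion in terms of the Slobodetski seminorm throughout, which is self-contained; or (b) check the scalar constant in the Fourier/Slobodetski comparison explicitly, noting that the multiplier relating $\int |\xi|^{2s}|\calF u|^2$ to the Slobodetski integral is $c_s = \big(\int (1-\cos t)|t|^{-1-2s}\rmd t\big)$, which stays bounded away from $0$ and $\infty$ as $s \to 1/2$. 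The main (very mild) obstacle is thus just bookkeeping of which constants are allowed to depend on $\epsilon$; the actual estimate is the elementary two-region split above. A matching lower bound showing the order $\epsilon^{-1/2}$ is sharp can be obtained by testing against a piecewise constant function (e.g. the indicator of a half-line truncated to a bounded interval), for which $|u|_{\rmH^{1/2}_w}$ is finite while $|u|_{\rmH^{1/2-\epsilon}}$ grows like $\epsilon^{-1/2}$ — but if the paper only needs the upper bound this part can be omitted.
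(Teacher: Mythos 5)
Your proof is correct and is essentially the paper's own argument: both bound the inner $\rmL^2$-difference by $\min\{|y|,1\}$ times the weak norm (the weak seminorm for $|y|<1$, the trivial $\rmL^2$ bound for $|y|\geq 1$) and integrate, picking up $1/(2\epsilon)$ from the near-diagonal region. Your added remarks on the Fourier/Slobodetski constant (harmless here since $s=1/2-\epsilon$ stays in a compact subset of $]0,1[$) and on sharpness go beyond what the paper records but do not change the argument.
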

\begin{proof}
We write:
\begin{align}
| u |_{\rmH^{1/2 - \epsilon}}^2  & = \iint \frac{| u(x + y) - u(x)|^2}{| y|^{ 2 - 2 \epsilon}} \rmd x \rmd y, \\
& \cleq \left(\int \frac{\min\{ |y|, 1 \}}{| y|^{ 2 - 2 \epsilon}} \rmd y\right) \| u\|_{\rmH^{1/2}_w}^2.
\end{align}
The integral over $y$ is bounded by:
\begin{equation}
\int_0^1 \frac{1}{[y|^{1- 2 \epsilon}}  \rmd y+ \int_1^\infty \frac{1}{[y|^{2- 2 \epsilon}} \rmd y= \frac{1}{2 \epsilon} + \frac{1}{1 - 2 \epsilon}.
\end{equation}
This yields the claimed result.
\end{proof}

Given a locally integrable function $u$ on $\bbR$, such as a piecewise constant one, we are interested in finding an absolutely continuous function $v$ that solves:
\begin{equation}
\alpha \dot v + \beta v = \beta u.
\end{equation}
We may determine $v$ as:
\begin{equation}
v(t) = \int_{- \infty}^t \frac{\beta}{\alpha} \exp( \frac{\beta(s-t)}{\alpha}) u (s) \rmd s.
\end{equation}
We introduce the function $G_\alpha$ defined by:
\begin{equation}
G_\alpha(s) = \left \{ \begin{array}{l l}
\frac{\beta}{\alpha} \exp( \frac{- \beta s}{\alpha}) &\textrm{ for } s \geq 0,\\
0 &\textrm{ for } s < 0.
\end{array} \right.
\end{equation}
With this notation we have:
\begin{equation}
v = G_\alpha \ast u.
\end{equation}
We now provide some mapping properties of convolution by $G_\alpha$. We notice first that:
\begin{equation}
\| G_\alpha \|_{\rmL^1} = 1.
\end{equation}
This gives uniform boundedness, from $\rmL^2(\bbR)$ to $\rmL^2(\bbR)$, for convolution by $G_\alpha$. Notice that this estimate works also for Hilbert space valued functions.
\begin{proposition}\label{prop:diffnorm}
The map $ u \mapsto G_\alpha \ast u$,  from $\rmH^{1/2 - \epsilon}(\bbR)$ to $\rmH^{1/2}(\bbR)$, has a norm of order
$1/\alpha^{\epsilon}$ for small $\epsilon$, uniformly in $\alpha$.
\end{proposition}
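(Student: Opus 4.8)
The plan is to work on the Fourier side, where convolution by $G_\alpha$ becomes multiplication by the symbol $\widehat{G_\alpha}(\xi)$, and then to interpolate a crude high-frequency bound against an exact low-frequency one. First I would compute the Fourier transform explicitly: since $G_\alpha(s) = (\beta/\alpha) \exp(-\beta s/\alpha) \mathbf{1}_{s\geq 0}$, one gets $\widehat{G_\alpha}(\xi) = (1 + \mathrm{i}\alpha\xi/\beta)^{-1}$ up to normalisation conventions, so that $|\widehat{G_\alpha}(\xi)|^2 = (1 + \alpha^2\xi^2/\beta^2)^{-1}$. The claim is then equivalent to the statement that the multiplier $m_\alpha(\xi) = (1+|\xi|)^{1/2}(1+|\xi|)^{-(1/2-\epsilon)}|\widehat{G_\alpha}(\xi)|$, which is essentially $(1+|\xi|)^{\epsilon}(1+\alpha^2\xi^2/\beta^2)^{-1/2}$, is bounded by $C\alpha^{-\epsilon}/\sqrt{\epsilon}$ or at least by $C\alpha^{-\epsilon}$ times something mild; and since $\beta$ is a fixed constant we may absorb it.

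The core estimate is the pointwise bound on $m_\alpha(\xi)$. I would split into two frequency regimes at the threshold $|\xi| \sim 1/\alpha$. For $|\xi| \leq 1/\alpha$ the factor $(1+\alpha^2\xi^2/\beta^2)^{-1/2}$ is bounded below by a constant, so $m_\alpha(\xi) \cleq (1+|\xi|)^\epsilon \cleq \alpha^{-\epsilon}$ (using $|\xi|\le 1/\alpha$, at least for $\alpha$ bounded). For $|\xi| \geq 1/\alpha$ the decaying factor gives $(1+\alpha^2\xi^2/\beta^2)^{-1/2} \cleq \beta/(\alpha|\xi|)$, so $m_\alpha(\xi) \cleq |\xi|^{\epsilon} \cdot (\alpha|\xi|)^{-1} = \alpha^{-1}|\xi|^{\epsilon - 1} \leq \alpha^{-1}(1/\alpha)^{\epsilon-1} = \alpha^{-\epsilon}$, since $\epsilon - 1 < 0$ and $|\xi| \geq 1/\alpha$. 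Thus $\|m_\alpha\|_\infty \cleq \alpha^{-\epsilon}$ uniformly in $\alpha$ in its range, and multiplier boundedness on $\rmL^2$ (Plancherel) converts this into the asserted operator bound from $\rmH^{1/2-\epsilon}(\bbR)$ to $\rmH^{1/2}(\bbR)$. The same argument applies verbatim to Hilbert-space-valued functions, since the multiplier is scalar and one integrates $|m_\alpha(\xi)|^2$ against $\|(\calF u)(\xi)\|^2$.

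The sharpness statement — that the norm is genuinely of order $\alpha^{-\epsilon}$ and not smaller — would follow by testing on a function whose Fourier transform concentrates near $|\xi| \sim 1/\alpha$, where both the crude bounds above are simultaneously saturated: there $m_\alpha(\xi) \sim (1/\alpha)^\epsilon$. I do not expect any real obstacle here; the only mild care needed is bookkeeping of the Fourier normalisation constant relating $\widehat{G_\alpha}$ to the stated symbol, and making precise the phrase "of order $1/\alpha^\epsilon$" — I would interpret it as: the operator norm is $\leq C\alpha^{-\epsilon}$ with $C$ independent of $\alpha$ and $\epsilon$ (for $\epsilon$ small, say $\epsilon \leq 1/4$), with a matching lower bound up to a constant. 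If one instead wants the $1/\sqrt\epsilon$ refinement in the spirit of Proposition \ref{prop:wh}, one keeps the $\rmL^2$-in-$\xi$ character of the composition with the injection $\rmH^{1/2}_w \hookrightarrow \rmH^{1/2-\epsilon}$ rather than taking a crude sup; but for the statement as written the sup-of-multiplier argument suffices.
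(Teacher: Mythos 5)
Your proposal is correct and follows essentially the same route as the paper: both pass to the Fourier side, compute $\calF G_\alpha(\xi)=(1+\mathrm{i}\alpha\xi/\beta)^{-1}$, and bound the resulting multiplier ratio between the $\rmH^{1/2}$ and $\rmH^{1/2-\epsilon}$ weights by $C\alpha^{-\epsilon}$. The only cosmetic difference is that you split at the threshold $|\xi|\sim 1/\alpha$ to bound the supremum, whereas the paper maximizes the ratio $|\xi|^{2\epsilon}/(1+\alpha^2|\xi|^2)$ exactly by calculus (obtaining a constant tending to $1$ as $\epsilon\to 0$); this changes nothing of substance.
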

\begin{proof}
For this proof we suppose, without loss of generality, that $\beta = 1$.

\noindent The Fourier transform of $G_\alpha$ is given by:
\begin{equation}
\calF G_\alpha (\xi) = \frac{1}{1 + \alpha i \xi}.
\end{equation}
It follows that:
\begin{equation}
|\calF G_\alpha (\xi)|^2 = \frac{1}{1 + \alpha^2 |\xi|^2}.
\end{equation}
We can therefore write:
\begin{align}
|v|_{\rmH^{1/2}}^2 & = \int \frac{|\xi|}{1 + \alpha^2|\xi|^2} |\calF u (\xi)|^2 \rmd \xi, \\
& \leq C(\alpha, \epsilon) |u|_{\rmH^{1/2-\epsilon}}^2,
\end{align}
with:
\begin{equation} 
C(\alpha,\epsilon) = \max \{ \frac{ |\xi|^{2 \epsilon}}{1 + \alpha^2|\xi|^2} \ : \ \xi \in \bbR \}.
\end{equation}
Calculus gives that the maximum is achieved when:
\begin{equation}
|\xi|^2 = \frac{\epsilon}{(1- \epsilon) \alpha^2 }.
\end{equation}
This provides:
\begin{equation}
C(\alpha, \epsilon) = \frac{(1-\epsilon) \epsilon^\epsilon}{(1- \epsilon)^{\epsilon} }\frac{1}{\alpha^{2\epsilon}}.
\end{equation}
One checks:
\begin{equation}
\lim_{\epsilon \to 0} \frac{(1-\epsilon) \epsilon^\epsilon}{(1- \epsilon)^{\epsilon} } = 1.
\end{equation}
This concludes the proof.
\end{proof}

\begin{corollary}\label{cor:comp} When we compose the three operators defined in Propositions \ref{prop:bar}, \ref{prop:wh} and \ref{prop:diffnorm}, which consists in projecting onto $\tau$-piecewise constants and then convolving with $G_\alpha$, we get an operator from $\rmH^{1/2}(\bbR)$ to itself, with norm of order $|\log(\alpha)|^{1/2}$.
\end{corollary}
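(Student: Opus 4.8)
The plan is to simply chain the three norm estimates from Propositions \ref{prop:bar}, \ref{prop:wh} and \ref{prop:diffnorm}, tracking the $\epsilon$-dependence of the constants and then optimizing over $\epsilon$. Concretely, write the composite operator as $u \mapsto G_\alpha \ast \overline u$, factored through the intermediate spaces as
\begin{equation}
\rmH^{1/2}(\bbR) \xrightarrow{\ u \mapsto \overline u\ } \rmH^{1/2}_w(\bbR) \hookrightarrow \rmH^{1/2-\epsilon}(\bbR) \xrightarrow{\ \ast G_\alpha\ } \rmH^{1/2}(\bbR).
\end{equation}
By Proposition \ref{prop:bar} the first map has operator norm $\cleq 1$, uniformly in $\tau$ and in $\epsilon$ (this proposition gives no $\epsilon$). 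By Proposition \ref{prop:wh} the injection has norm of order $\epsilon^{-1/2}$ for small $\epsilon$. By Proposition \ref{prop:diffnorm} the convolution has norm of order $\alpha^{-\epsilon}$ for small $\epsilon$, uniformly in $\alpha$. Multiplying, the composite has operator norm bounded, up to a constant, by $\epsilon^{-1/2} \alpha^{-\epsilon}$.

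Next I would choose $\epsilon$ to balance the two competing factors. Since $\alpha$ is small, $|\log \alpha| = -\log \alpha$ is large and positive; taking $\epsilon = 1/|\log \alpha|$ gives $\alpha^{-\epsilon} = \exp(-\epsilon \log \alpha) = \exp(1) = e$, a fixed constant, while $\epsilon^{-1/2} = |\log \alpha|^{1/2}$. Hence the product $\epsilon^{-1/2}\alpha^{-\epsilon} \cleq |\log \alpha|^{1/2}$, which is exactly the claimed bound. (One should note in passing that when $\alpha$ is not small, say $\alpha \geq 1/2$, the statement is vacuous or trivially handled, since then $G_\alpha \ast \overline{\cdot}$ is bounded on $\rmH^{1/2}$ directly with a constant; the interesting regime is $\alpha \to 0$, where $|\log \alpha|^{1/2} \to \infty$ so the estimate is still consistent but non-trivial.)

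There is essentially no hard obstacle here: the work was done in the three preceding propositions, and the corollary is the routine act of composing them and optimizing a single free parameter. The only point requiring a little care is to make sure the $\epsilon$-dependence is used honestly — that is, the constant hidden in $\cleq$ for the injection norm is genuinely of the form $c\,\epsilon^{-1/2}$ with $c$ independent of $\epsilon$ (as Proposition \ref{prop:wh}'s proof shows, the $y$-integral is $\tfrac{1}{2\epsilon} + \tfrac{1}{1-2\epsilon} \sim \tfrac{1}{2\epsilon}$), and similarly that the constant $C(\alpha,\epsilon)$ in Proposition \ref{prop:diffnorm} tends to $1$ as $\epsilon \to 0$ uniformly in $\alpha$, so that $\alpha^{-\epsilon}$ is multiplied by a factor bounded independently of both. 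Granting these, the substitution $\epsilon = 1/|\log\alpha|$ finishes the argument, and one may equally well write $\epsilon = \min\{1/|\log\alpha|, \epsilon_0\}$ for a fixed small $\epsilon_0$ to stay safely inside the range where "for small $\epsilon$" is valid, without affecting the conclusion.
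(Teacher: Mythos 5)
Your proposal is correct and is essentially identical to the paper's own proof: compose the three estimates to get a norm of order $\epsilon^{-1/2}\alpha^{-\epsilon}$ and choose $\epsilon = 1/|\log(\alpha)|$. The extra care you take with the $\epsilon$-uniformity of the constants and the small-$\epsilon$ range is sound but not a departure from the paper's argument.
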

\begin{proof} We get a norm of order $1/ (\epsilon^{1/2} \alpha^{\epsilon})$ and choose $\epsilon =  1 / |\log (\alpha)|$.
\end{proof}

We also require the following result, whose proof is a variant of the above techniques:
\begin{proposition}\label{prop:varal} Given $u\in \rmH^{1/2}(\bbR)$ let $\overline u$ be the projection onto $\tau$-piecewise constants and define $v= G_\alpha \ast \overline u$. Then we have  bound:
\begin{equation}
\alpha \int |\dot v|^2 \cleq |\log(\alpha)| \|u\|_{\rmH^{1/2}}^2.
\end{equation}
\end{proposition}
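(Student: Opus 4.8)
The plan is to proceed directly on the Fourier side, exactly as in the proof of Proposition~\ref{prop:diffnorm}, and combine it with the estimate of Proposition~\ref{prop:bar} for the projection onto piecewise constants. First I would write, using $\calF \dot v = i\xi \, \calF v$ and $\calF v = (\calF G_\alpha)(\calF \overline u)$,
\begin{equation}
\alpha \int |\dot v|^2 = \alpha \int \frac{|\xi|^2}{1 + \alpha^2 |\xi|^2} |\calF \overline u(\xi)|^2 \rmd \xi,
\end{equation}
after normalizing $\beta = 1$ as before. The task is thus to bound the Fourier multiplier $\alpha |\xi|^2 / (1 + \alpha^2|\xi|^2)$ against the multiplier $|\xi|$ defining $|\overline u|_{\rmH^{1/2}}^2$, losing only a factor $|\log \alpha|$ when one passes from $\overline u$ to $u$.

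The key step is the pointwise multiplier comparison: for any $\epsilon \in \, ]0,1/2[$,
\begin{equation}
\frac{\alpha |\xi|^2}{1 + \alpha^2 |\xi|^2} \leq |\xi|^{1 - 2\epsilon} \cdot \frac{\alpha |\xi|^{1 + 2\epsilon}}{1 + \alpha^2|\xi|^2} \leq |\xi|^{1-2\epsilon} \, D(\alpha, \epsilon),
\end{equation}
where $D(\alpha,\epsilon) = \max\{ \alpha |\xi|^{1+2\epsilon}/(1 + \alpha^2 |\xi|^2) : \xi \in \bbR\}$. A calculus computation, identical in spirit to the one in Proposition~\ref{prop:diffnorm}, shows that the maximum is attained at $\alpha^2|\xi|^2 = (1+2\epsilon)/(1-2\epsilon)$, giving $D(\alpha,\epsilon)$ of order $\alpha^{-2\epsilon}$ with a constant that stays bounded (indeed tends to a finite limit) as $\epsilon \to 0$. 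Consequently
\begin{equation}
\alpha \int |\dot v|^2 \cleq \alpha^{-2\epsilon} \int |\xi|^{1-2\epsilon} |\calF \overline u(\xi)|^2 \rmd \xi = \alpha^{-2\epsilon} |\overline u|_{\rmH^{1/2 - \epsilon}}^2.
\end{equation}
Now I invoke Proposition~\ref{prop:bar}, which gives $|\overline u|_{\rmH^{1/2}_w} \cleq |u|_{\rmH^{1/2}}$ uniformly in $\tau$, followed by Proposition~\ref{prop:wh}, which bounds $|\overline u|_{\rmH^{1/2-\epsilon}}^2 \cleq \epsilon^{-1} |\overline u|_{\rmH^{1/2}_w}^2$. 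Stringing these together yields $\alpha \int |\dot v|^2 \cleq \alpha^{-2\epsilon}\epsilon^{-1} \|u\|_{\rmH^{1/2}}^2$, and the final choice $\epsilon = 1/|\log \alpha|$ (so that $\alpha^{-2\epsilon} = e^2$ is bounded) produces the claimed bound $\alpha\int|\dot v|^2 \cleq |\log\alpha| \, \|u\|_{\rmH^{1/2}}^2$.

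The main obstacle, and the only genuinely new point compared with Corollary~\ref{cor:comp}, is verifying that the relevant multiplier $\alpha |\xi|^{1+2\epsilon}/(1 + \alpha^2|\xi|^2)$ has supremum of the \emph{right} order $\alpha^{-2\epsilon}$ with a constant bounded uniformly as $\epsilon \to 0$ — this is where the extra power of $|\xi|$ coming from $\dot v$ (rather than $v$) is absorbed, and it is essentially a one-variable maximization that must be done carefully to confirm no hidden blow-up in $\epsilon$ or $\alpha$ occurs. A minor additional wrinkle is that, as flagged in the bullet preceding Corollary~\ref{cor:comp}, $\overline u$ and hence $v$ need not vanish at the endpoints of the interval under consideration; but since the whole argument is carried out on $\bbR$ via the Fourier/Slobodetski characterization and the cited propositions are stated on $\bbR$, this causes no difficulty for the stated estimate.
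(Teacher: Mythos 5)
Your argument is correct and is essentially the paper's own proof: the same Fourier-multiplier bound $\max_\xi \alpha|\xi|^{1+2\epsilon}/(1+\alpha^2|\xi|^2) \cleq \alpha^{-2\epsilon}$, the same appeal to Propositions \ref{prop:bar} and \ref{prop:wh} to pass from $\overline u$ to $u$ at the cost of $\epsilon^{-1}$, and the same final choice $\epsilon = 1/|\log(\alpha)|$. The explicit verification that the maximizing constant stays bounded as $\epsilon \to 0$ is a nice added detail but not a departure from the paper's route.
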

\begin{proof}
We write:
\begin{align}
\alpha \int |\dot v|^2 & \cleq \alpha \int |\xi|^2 |(\calF G_\alpha \ast \overline u)(\xi)|^2 \rmd \xi,\\
& \cleq \int \frac{\alpha |\xi|^2 }{1 + \alpha^2|\xi|^2} |(\calF \overline u) (\xi)|^2 \rmd \xi,\\
& \cleq C(\alpha, \epsilon) \| \overline u\|_{\rmH^{1/2-\epsilon}}.
\end{align}
Here we estimate:
\begin{equation}
C(\alpha, \epsilon) = \max \{ \frac{\alpha |\xi|^{1+ 2\epsilon } }{1 + \alpha^2|\xi|^2}   \ : \ \xi \in \bbR \} \cleq \frac{1}{\alpha^{2\epsilon}}.
\end{equation}
We proceed using Propositions \ref{prop:wh} and \ref{prop:bar}.
\begin{equation}
\alpha \int |\dot v|^2 \cleq \frac{1}{\epsilon \alpha^{2\epsilon}} \| u\|_{\rmH^{1/2}}^2. 
\end{equation}
And finally we choose $\epsilon = 1/|\log(\alpha)|$.
\end{proof}

Recall that convolution by $G_\alpha$ on piecewise constants produces upwinded functions that do not respect homogeneous Dirichlet boundary conditions. To control the boundary values we will use:

\begin{proposition}\label{prop:linfal}
 We have an estimate, valid for $u\in \rmH^1(\bbR)$:
\begin{equation}
\| u\|_{\rmL^{\infty}} \cleq |\log(\alpha)|^{1/2} ( \| u\|_{\rmH^{1/2}}^2 + \alpha \int |\dot u|^2)^{1/2}.
\end{equation}
\end{proposition}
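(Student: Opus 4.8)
The plan is to argue in Fourier variables, exactly as in the proofs of Propositions \ref{prop:diffnorm} and \ref{prop:varal}. The starting point is the elementary bound $\| u\|_{\rmL^\infty} \cleq \| \calF u\|_{\rmL^1(\bbR)}$, valid whenever $\calF u$ is integrable, which is automatic for $u \in \rmH^1(\bbR)$. Then, for a positive weight $w$ to be chosen, Cauchy--Schwarz gives
\begin{equation}
\| \calF u\|_{\rmL^1} = \int \frac{1}{w(\xi)^{1/2}}\, w(\xi)^{1/2}\, |\calF u(\xi)|\, \rmd \xi \leq \left( \int \frac{\rmd \xi}{w(\xi)} \right)^{1/2} \left( \int w(\xi)\, |\calF u(\xi)|^2\, \rmd \xi \right)^{1/2}.
\end{equation}

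First I would fix the weight so that the second factor matches the right-hand side of the claim. Since, by Plancherel, $\| u\|_{\rmH^{1/2}}^2 + \alpha \int |\dot u|^2 = \int (1 + |\xi| + \alpha |\xi|^2)\, |\calF u(\xi)|^2\, \rmd \xi$, the natural choice is $w(\xi) = 1 + |\xi| + \alpha |\xi|^2$, and it then remains to show $\int_\bbR w(\xi)^{-1} \rmd \xi \cleq |\log \alpha|$ in the regime $\alpha \cleq 1$. Incidentally, the same Cauchy--Schwarz inequality, together with the finiteness of $\int_\bbR w(\xi)^{-1}\rmd \xi$, is also what justifies $\calF u \in \rmL^1$ in the first place, so the argument is self-contained.

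Next I would estimate $\int_\bbR w(\xi)^{-1}\rmd \xi = 2 \int_0^\infty (1 + \xi + \alpha \xi^2)^{-1}\rmd \xi$ by splitting $[0,\infty)$ into three ranges according to which term of $w$ dominates: on $[0,1]$ the integrand is $\leq 1$; on $[1, 1/\alpha]$ one has $\alpha \xi^2 \leq \xi$, so $w(\xi) \ceq \xi$ and the contribution is comparable to $\int_1^{1/\alpha} \xi^{-1}\rmd \xi = |\log \alpha|$; on $[1/\alpha, \infty)$ one has $\xi \leq \alpha \xi^2$, so $w(\xi) \geq \alpha \xi^2$ and the contribution is $\leq \int_{1/\alpha}^\infty (\alpha \xi^2)^{-1}\rmd \xi = 1$. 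Summing the three bounds gives $\int_\bbR w(\xi)^{-1}\rmd \xi \cleq |\log \alpha|$, and inserting this into the displayed Cauchy--Schwarz estimate yields the claim.

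I do not expect a serious obstacle: the argument is a weighted Cauchy--Schwarz of precisely the type used twice already in this section, and the only quantitative input is the three-region estimate of the elementary integral, where the logarithm is produced exactly by the middle range $1 \leq |\xi| \leq 1/\alpha$ on which the $\rmH^{1/2}$-weight $|\xi|$ governs $w$. The one minor point to state carefully is the passage from $\rmL^\infty$ to $\rmL^1$ of the Fourier transform, which is handled as indicated above.
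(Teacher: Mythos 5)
Your argument is correct and is essentially the paper's own proof: the same bound $\| u\|_{\rmL^\infty} \cleq \| \calF u\|_{\rmL^1}$, the same weighted Cauchy--Schwarz with weight $1 + |\xi| + \alpha|\xi|^2$, and the same elementary estimate of $\int (1+|\xi|+\alpha|\xi|^2)^{-1}\rmd\xi \cleq |\log\alpha|$ (the paper splits at $|\xi| = 1/\alpha$ into two ranges rather than your three, an immaterial difference).
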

\begin{proof}
We write:
\begin{align}
\| u\|_{\rmL^{\infty}} & \cleq \| \calF u \|_{\rmL^1} = \int \frac{(1 + |\xi| + \alpha |\xi|^2)^{1/2}}{ (1 + |\xi| + \alpha |\xi|^2)^{1/2} }|\calF u (\xi)| \rmd \xi.
\end{align}
Here we have prepared for a Cauchy-Schwartz inequality. We are led to evaluate the integral:
\begin{align}
 \int \frac{1}{1 + |\xi| + \alpha |\xi|^2 } \rmd \xi . 
\end{align}
We distinguish two subdomains for the variable $\xi$ by comparing $\alpha |\xi|$ with $1$.
For the first integral we use:
\begin{equation}
 \int_0^{1/\alpha} \frac {1}{1 +  \xi}\rmd \xi \leq 1 + | \log(\alpha) |.
\end{equation}
For the second integral we use:
\begin{equation}
 \int_{1/\alpha}^\infty \frac {1}{ \alpha |\xi|^2} \rmd \xi = 1.
\end{equation}
This concludes the proof.
\end{proof}

\paragraph{Interpolation onto upwinded functions.}

\begin{proposition} For $u$ continuous piecewise affine, $0$ at extremities, and $v$ the upwinded interpolant, we have (independent of $\alpha$ and $\tau$):
\begin{equation}
\int |v|^2  \ceq \int |u|^2.\\
\end{equation}
\end{proposition}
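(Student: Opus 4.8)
The plan is to reduce everything to a single mesh interval by scaling, and there to compare the piecewise affine interpolant $\ell$ with the exponential (upwinded) interpolant $w$ sharing the two prescribed nodal values. The point to watch is that this per-interval comparison is \emph{not} two-sided uniformly in the P\'eclet number $\beta\tau/\alpha$: only the value at the downwind endpoint of each interval is robustly controlled from below. Consequently the two inequalities hidden in $\ceq$ are assembled differently — the lower one by a telescoping over the nodes that exploits $u=0$ at the extremities.

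After the change of variables $x\mapsto x/\tau$ we may assume $\tau=1$, and a further scaling shows that the only relevant parameter is $p:=\beta\tau/\alpha$, which is positive since $\beta>0$. Write the nodes as $0,1,\dots,N$ and put $u_i=u(i)$, so $u_0=u_N=0$. On $[i,i+1]$ the function $u$ is the affine interpolant $\ell_{u_i,u_{i+1}}$ of the endpoint values, whereas by (\ref{eq:cpe}) the upwinded interpolant $v$ equals there the function $w_{u_i,u_{i+1}}$ of the form $c_1+c_2\exp(-px)$ with the same endpoint values. From $\int_0^1|\ell_{a,b}|^2=\frac{1}{3}(a^2+ab+b^2)$ one obtains, by summation and using $u_0=u_N=0$, the standard equivalence $\int|u|^2\ceq\sum_i u_i^2$; so it suffices to prove $\int|v|^2\ceq\sum_i u_i^2$. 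Since $p>0$, the profile $w_{a,b}$ is monotone on $[0,1]$, hence $\|w_{a,b}\|_{\rmL^\infty(0,1)}=\max\{|a|,|b|\}$ and $\int_0^1|w_{a,b}|^2\le a^2+b^2$; summing over the intervals gives the upper bound $\int|v|^2\le\sum_i(u_i^2+u_{i+1}^2)\cleq\sum_i u_i^2$.

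For the lower bound the key ingredient is the one-interval estimate, uniform in $p>0$ and $a,b\in\bbR$,
\[
\int_0^1 |w_{a,b}(x)|^2\,\rmd x \ \cgeq\ b^2 ,
\]
where $b$ is the value at the downwind endpoint $x=1$. To prove it, write $w_{a,b}(x)=b+c_2\,\phi_p(x)$ with $\phi_p(x)=\exp(-px)-\exp(-p)$; then $\phi_p\ge0$ on $[0,1]$, $\phi_p(1)=0$, and $\phi_p$ is not constant. Viewing $\int_0^1|w_{a,b}|^2$ as a quadratic in the free parameter $c_2\in\bbR$, its minimum equals $b^2\left(1-\frac{(\int_0^1\phi_p)^2}{\int_0^1\phi_p^2}\right)$, which is strictly positive for each fixed $p>0$ by Cauchy--Schwarz, equality being excluded because $\phi_p$ is non-constant. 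For uniformity one computes the two limits of the ratio $\frac{(\int_0^1\phi_p)^2}{\int_0^1\phi_p^2}$: it tends to $3/4$ as $p\to0^+$ (the affine limit, with minimiser $a=-b/2$) and to $0$ as $p\to\infty$; continuity of this ratio on $(0,\infty)$ together with the positivity of these two limits yields $c:=\inf_{p>0}\left(1-\frac{(\int_0^1\phi_p)^2}{\int_0^1\phi_p^2}\right)>0$. Granting the displayed estimate,
\[
\int|v|^2=\sum_{i=0}^{N-1}\int_0^1|w_{u_i,u_{i+1}}|^2\ \cgeq\ \sum_{i=0}^{N-1}u_{i+1}^2=\sum_{i=1}^{N}u_i^2 ,
\]
and since $u_0=u_N=0$ the right-hand side equals $\sum_{i=0}^{N}u_i^2\ceq\int|u|^2$, which closes the proof.

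I expect the only real work to be the uniform-in-$p$ one-interval lower bound: one must check that the Cauchy--Schwarz deficit $1-\frac{(\int\phi_p)^2}{\int\phi_p^2}$ does not degenerate as $p$ runs over $(0,\infty)$, which is why we evaluate its two limiting values and invoke continuity. The symmetric attempt to protect instead the upwind value $a$ genuinely fails — the corresponding deficit tends to $0$ as $p\to\infty$, reflecting the thin layer the upwinded profile develops near the inflow endpoint — so the telescoping, which picks up each interior node as the downwind endpoint of exactly one interval, is essential. Everything else is scaling and the monotonicity of the exponential profile.
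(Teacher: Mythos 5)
Your proof is correct, and in fact the paper states this proposition without any proof (it is followed only by the remark that one of the two bounds fails without the boundary conditions), so there is no argument of the paper to compare against; yours fills the gap soundly. The two halves are handled exactly as one should: the upper bound is a pointwise monotonicity observation needing no boundary condition, while the lower bound rests on the per-interval estimate $\int_0^1|w_{a,b}|^2\cgeq b^2$ for the downwind value, and your verification that the Cauchy--Schwarz deficit $1-(\int_0^1\phi_p)^2/\int_0^1\phi_p^2$ stays bounded away from $0$ uniformly in the P\'eclet number (limits $1/4$ at $p\to0^+$ and $1$ at $p\to\infty$, plus continuity and strict positivity in between) is the genuine content. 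The telescoping then uses only $u_0=0$ at the inflow extremity, and your closing observation that the symmetric bound $\int_0^1|w_{a,b}|^2\cgeq a^2$ degenerates as $p\to\infty$ is precisely the reason behind the paper's remark that the equivalence is false without the boundary conditions. One cosmetic point: the scaling step should note (as is immediate) that the change of variables multiplies $\int|u|^2$ and $\int|v|^2$ by the same factor $\tau$, so the equivalence and its constants are unaffected; with that said, nothing is missing.
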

Notice that one of the two bounds is false if we remove the boundary conditions.

\begin{proposition} For $u\in \rmH^1(0,T)$ and $v$ the upwinded interpolant, we have (independent of $\alpha$ and $\tau$):
\begin{equation}
\int | u - v|^2  \cleq \tau^2 \int |\dot u|^2.
\end{equation}
\end{proposition}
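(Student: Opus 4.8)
The plan is to prove the approximation estimate $\int |u-v|^2 \cleq \tau^2 \int |\dot u|^2$ by a local (element-by-element) argument combined with a scaling argument to extract the power of $\tau$, exactly as one does for nodal interpolation by piecewise affines. Write $t_j = j\tau$ for the vertices, and on each interval $I_j = ]t_j, t_{j+1}[$ note that both $u$ and $v$ agree at the two endpoints $t_j, t_{j+1}$: indeed $v$ is the \emph{upwinded} interpolant, which by construction matches the nodal values of $u$, and those nodal values are obtained from the same continuous piecewise affine $u$. So on each $I_j$ the difference $w = u - v$ vanishes at both endpoints. The estimate will follow from a local bound $\int_{I_j} |u-v|^2 \cleq \tau^2 \int_{I_j} |\dot u|^2$ summed over $j$.

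For the local bound I would first reduce to the reference interval. Scale $I_j$ to $]0,1[$ by $x \mapsto (x - t_j)/\tau$; under this scaling $\int_{I_j}|u-v|^2$ picks up a factor $\tau$ and $\int_{I_j}|\dot u|^2$ a factor $\tau^{-1}$, so a bound $\int_{I_j}|u-v|^2 \cleq \tau^2 \int_{I_j}|\dot u|^2$ on $I_j$ is equivalent to $\int_{\hat I}|\hat u - \hat v|^2 \cleq \int_{\hat I}|\dot{\hat u}|^2$ on $\hat I = ]0,1[$, where now the rescaled viscosity is $\hat\alpha = \alpha/\tau$; the bound must be uniform in $\hat\alpha \in ]0,\infty[$. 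On $\hat I$, $\hat u$ is affine, say $\hat u(x) = a + bx$, and $\hat v$ is the solution of $\hat\alpha \dot{\hat v} + \beta \hat v = $ (the corresponding right-hand side) — more concretely, by \eqref{eq:cpe}, $\hat v$ is of the form $c_1 + c_2\exp(-\tfrac{\beta}{\hat\alpha}x)$ with $c_1, c_2$ fixed by $\hat v(0) = \hat u(0) = a$, $\hat v(1) = \hat u(1) = a+b$. Since $\int_{\hat I}|\dot{\hat u}|^2 = b^2$, what has to be shown is $\int_0^1 |\hat u - \hat v|^2 \cleq b^2$ with a constant independent of $\hat\alpha$; by linearity in $(a,b)$ and the fact that $\hat u - \hat v$ vanishes at $0,1$, only the $b$-dependence survives, so WLOG $a = 0$, $b = 1$, and one must check $\int_0^1 |\hat u - \hat v|^2 \le C$ uniformly. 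Here $\hat u(x) = x$ and $\hat v(x) = c_1 + c_2 e^{-\mu x}$ with $\mu = \beta/\hat\alpha$, $c_1 + c_2 = 0$, $c_1 + c_2 e^{-\mu} = 1$, so that $|\hat u(x) - \hat v(x)| \le |\hat u(x)| + |\hat v(x)| \le 1 + \max_{[0,1]}|\hat v|$, and $\max_{[0,1]}|\hat v| \le 1$ because $\hat v$ is monotone between its endpoint values $0$ and $1$ (the exponential $e^{-\mu x}$ is monotone, so $\hat v$ lies between $\hat v(0)$ and $\hat v(1)$). Thus $\int_0^1|\hat u-\hat v|^2 \le 4$, uniformly in $\hat\alpha$.

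Summing the rescaled-back local estimates over $j$ gives $\int |u-v|^2 = \sum_j \int_{I_j}|u-v|^2 \cleq \tau^2 \sum_j \int_{I_j}|\dot u|^2 = \tau^2 \int |\dot u|^2$, which is the claim. I expect the only genuinely delicate point to be confirming that the reference-interval constant is truly independent of $\hat\alpha$ in \emph{both} regimes $\hat\alpha \to 0$ (where $\hat v$ develops a sharp layer near $x=1$ but stays bounded by $1$ in sup norm, so the $\rmL^2$ bound survives) and $\hat\alpha \to \infty$ (where $\hat v \to \hat u$ and the difference tends to $0$); the monotonicity/maximum-principle remark above handles the first and is the cleanest way to avoid grinding through the explicit $c_1, c_2$. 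A minor bookkeeping point is that the statement allows nonzero interior structure of $u$ only through its being continuous piecewise affine on the same $\tau$-grid — if $u$ has breakpoints only at the $t_j$ then the above is exactly right; if one wants to allow $u \in \rmH^1$ with its \emph{piecewise affine interpolant} compared to the upwinded interpolant, one first inserts the standard $\rmH^1 \to$ piecewise-affine estimate, but as stated ("$u$ continuous piecewise affine") no such extra step is needed.
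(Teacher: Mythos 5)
Your argument is correct and rests on the same key fact as the paper's own (one-line) proof: on each $\tau$-interval the upwinded interpolant stays between the two nodal values (a maximum principle for solutions of $-\alpha\ddot v-\beta\dot v=0$), which yields the local bound $\int_{I_j}|u-v|^2\cleq \tau^2\int_{I_j}|\dot u|^2$; your scaling to the reference interval and the explicit constants $c_1,c_2$ are a more computational packaging of the same idea. One caveat on the closing remark: the proposition as stated takes $u\in\rmH^1(0,T)$, not continuous piecewise affine, so the ``extra step'' you describe is actually required rather than optional --- insert the piecewise affine interpolant $\Pi u$ (which has the same nodal values as $u$, hence the same upwinded interpolant $v$), use $\|u-\Pi u\|_{\rmL^2}\cleq\tau\|\dot u\|_{\rmL^2}$ and $\|\dot{(\Pi u)}\|_{\rmL^2}\le\|\dot u\|_{\rmL^2}$, and then apply your local estimate to $\Pi u-v$; alternatively, the maximum-principle bound $|v(x)-u(t_j)|\le|u(t_{j+1})-u(t_j)|\le\tau^{1/2}(\int_{I_j}|\dot u|^2)^{1/2}$ applies directly to $\rmH^1$ functions, with no scaling or explicit formula, which is how the paper argues.
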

\begin{proof}
Because the values of $v$ on a $\tau$-interval lie between the values at the extremities (which is a maximum principle for upwinded functions).
\end{proof}

\begin{proposition}\label{prop:phi}
 Let $u$ be continuous piecewise affine and $v$ be the upwinded interpolant. Then:
\begin{equation}
\int |\dot v |^2 = \Phi(p) \int |\dot u|^2,
\end{equation}
where $p=\beta \tau / \alpha $ is the P\'eclet number and:
\begin{equation}
\Phi(p) = \left ( \frac{\exp(p) +1}{\exp(p) -1} \right ) \frac{p}{2}.
\end{equation}
\end{proposition}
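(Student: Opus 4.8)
The plan is to reduce the global computation to a single $\tau$-interval and then evaluate one explicit integral. Since $\dot v$ and $\dot u$ are both $\tau$-piecewise, the ratio $\int |\dot v|^2 / \int |\dot u|^2$ will be shown to be constant across intervals, equal to the value it takes on one reference interval; the key point is that the upwinded interpolant on $]0,\tau[$ depends only on the two nodal values, which are the same as those of $u$, so we may work interval by interval. By translation and scaling we may normalize the interval to $]0,1[$ and, by linearity in the nodal data, compute the ratio for the affine function $u(x) = x$ with nodal values $0$ and $1$; a general affine piece is an affine combination of this one and a constant (whose derivative vanishes), so the ratio is unchanged.

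First I would write down the upwinded interpolant on $]0,\tau[$ explicitly. By (\ref{eq:cpe}), the function of the form $c_1 + c_2\exp(-\beta x/\alpha)$ taking value $0$ at $x=0$ and $1$ at $x=\tau$ is
\begin{equation}
v(x) = \frac{1 - \exp(-\beta x/\alpha)}{1 - \exp(-\beta \tau/\alpha)} = \frac{1 - \exp(-\beta x/\alpha)}{1 - \exp(-p)},
\end{equation}
with $p = \beta\tau/\alpha$. Then $\dot v(x) = \dfrac{(\beta/\alpha)\exp(-\beta x/\alpha)}{1 - \exp(-p)}$, whereas the affine interpolant has $\dot u \equiv 1/\tau$ on this interval, so $\int_0^\tau |\dot u|^2 = 1/\tau$. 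I would compute
\begin{equation}
\int_0^\tau |\dot v(x)|^2\,\rmd x = \frac{(\beta/\alpha)^2}{(1-\exp(-p))^2} \int_0^\tau \exp(-2\beta x/\alpha)\,\rmd x = \frac{(\beta/\alpha)^2}{(1-\exp(-p))^2}\cdot\frac{\alpha}{2\beta}\bigl(1 - \exp(-2p)\bigr).
\end{equation}
Using $1 - \exp(-2p) = (1-\exp(-p))(1+\exp(-p))$ this simplifies to $\dfrac{\beta}{2\alpha}\cdot\dfrac{1+\exp(-p)}{1-\exp(-p)}$. Dividing by $1/\tau$ and recalling $\beta\tau/\alpha = p$ gives the ratio $\dfrac{p}{2}\cdot\dfrac{1+\exp(-p)}{1-\exp(-p)} = \dfrac{p}{2}\cdot\dfrac{\exp(p)+1}{\exp(p)-1} = \Phi(p)$, after multiplying numerator and denominator by $\exp(p)$.

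Finally I would assemble the global statement: writing $u = \sum_k u_k$ where $u_k$ agrees with $u$ on the $k$-th interval and is constant outside, or more simply just noting that both $\int|\dot v|^2$ and $\int|\dot u|^2$ decompose as sums over the $\tau$-intervals $]k\tau,(k+1)\tau[$ of the per-interval contributions, each of which carries the same factor $\Phi(p)$ by the interval-wise computation above (the constant $\Phi(p)$ being independent of $k$ and of the nodal values). Summing over $k$ yields $\int|\dot v|^2 = \Phi(p)\int|\dot u|^2$. I do not anticipate a genuine obstacle here; the only mild subtlety is the degenerate case $\beta_T = 0$ (i.e. $p = 0$), where one checks $\lim_{p\to 0}\Phi(p) = 1$, consistent with $v = u$, and where (\ref{eq:cpe}) is replaced by the affine function as noted after that equation. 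If one prefers to avoid separate treatment of $p=0$, one observes that $\Phi$ extends continuously to $p=0$ and both sides of the claimed identity depend continuously on $\alpha$.
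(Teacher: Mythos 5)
Your proposal is correct and follows essentially the same route as the paper: reduce to a single $\tau$-interval by linearity, and verify the identity there by an elementary explicit computation with the exponential upwinded function (the paper simply checks it for $v(t)=\exp(-\beta t/\alpha)$, which is equivalent to your normalized nodal-value computation). You have merely written out the elementary integral that the paper leaves to the reader, and your per-interval decomposition and handling of the degenerate case $p=0$ are both fine.
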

\begin{proof}
It suffices to check the identity on a $\tau$-interval. And there it suffices to check it in the case where $v$ is the function defined by:
\begin{equation}
v(t) = \exp(-\frac{\beta t}{\alpha}).
\end{equation}
Then its an elementary computation.
\end{proof}
Notice in particular that $\Phi(p)$ tends to $1$ as $p$ tends $0$ (as expected) and behaves like $p$ as $p$ tends to infinity.

\begin{proposition}  Let $u$ be continuous piecewise affine and $v$ be the upwinded interpolant. Then:
\begin{equation}
| v|_{\rmH^{1/2}_w} \cleq |u|_{\rmH^{1/2}}.
\end{equation} 
\end{proposition}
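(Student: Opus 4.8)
The plan is to reduce the bound on $|v|_{\rmH^{1/2}_w}$ to three pieces: the comparison $\|v-\tau_y v\|_{\rmL^2}$ is estimated by splitting into small $|y|$ and large $|y|$, as in the proof of Proposition~\ref{prop:bar}. For $|y|\cgeq\tau$, one writes $\|v-\tau_y v\|_{\rmL^2}\leq\|v-u\|_{\rmL^2}+\|u-\tau_y u\|_{\rmL^2}+\|\tau_y(u-v)\|_{\rmL^2}$, and applies the previously proved approximation estimate $\int|u-v|^2\cleq\tau^2\int|\dot u|^2$ together with the inverse inequality $\tau\int|\dot u|^2\cleq|u|^2_{\rmH^{1/2}}$ available for $\tau$-piecewise affine $u$ (the same estimate used implicitly in Lemma~\ref{lem:equiv}); this yields $\|v-\tau_y v\|_{\rmL^2}\cleq(\tau^{1/2}+|y|^{1/2})|u|_{\rmH^{1/2}}\cleq|y|^{1/2}|u|_{\rmH^{1/2}}$. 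So the only genuine work is the regime $|y|<\tau$.

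For $|y|<\tau$ the idea is the same as in Proposition~\ref{prop:bar}: the $\rmL^2$-norm of $v-\tau_y v$ is controlled, up to $|y|$, by the sum over nodes $k\tau$ of the squared jumps of $v$ across those nodes — except that $v$ is continuous, so instead of jumps one picks up the local oscillation of $v$ on each interval $[(k-1)\tau,k\tau]$ and $[k\tau,(k+1)\tau]$. More precisely, for $|y|<\tau$ each point lies in at most two consecutive $\tau$-intervals from $x$ to $x+y$, so $|v(x+y)-v(x)|$ is bounded by the oscillation of $v$ on the interval containing $x$ plus that on the next one; squaring and integrating over a period gives $\|v-\tau_y v\|^2_{\rmL^2}\cleq|y|\sum_k \mathrm{osc}_{I_k}(v)^2$ where $I_k$ is the union of the two intervals adjacent to $k\tau$. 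Now by the maximum principle for upwinded functions (used already in the proof that $\int|u-v|^2\cleq\tau^2\int|\dot u|^2$), the oscillation of $v$ on a $\tau$-interval equals the oscillation of $u$ on that interval, i.e. $|u((k{+}1)\tau)-u(k\tau)|$. So $\sum_k\mathrm{osc}_{I_k}(v)^2\cleq\sum_k|u((k{+}1)\tau)-u(k\tau)|^2$, and the final task is to bound this node-difference sum by $|u|^2_{\rmH^{1/2}}$.

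That last bound is the main obstacle, though it is a standard fact about piecewise affine functions: for $u$ continuous and $\tau$-piecewise affine, $\sum_k|u((k{+}1)\tau)-u(k\tau)|^2\cleq|u|^2_{\rmH^{1/2}(\bbR)}$, uniformly in $\tau$. One proves it by scaling: on the reference interval $]-1,1[$ one has $|u(0)-u(1)|\cleq|u|_{\rmH^{1/2}(-1,1)}$ for affine $u$ restricted to each half, and the two sides scale identically under dilation by $\tau$, giving $|u((k{+}1)\tau)-u(k\tau)|\cleq|u|_{\rmH^{1/2}((k-1)\tau,(k+2)\tau)}$; summing the (finitely overlapping) localized seminorms gives the claim. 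Combining the two regimes, one concludes $|v|_{\rmH^{1/2}_w}\cleq|u|_{\rmH^{1/2}}$. I expect the only slightly delicate point to be making the oscillation/max-principle step precise, so that the continuity of $v$ is used to replace the jump terms of Proposition~\ref{prop:bar} by oscillation terms without losing uniformity in $\tau$ or $\alpha$.
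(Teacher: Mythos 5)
Your overall architecture is the same as the paper's: a local estimate on a macro-interval, the observation that (by monotonicity/the maximum principle) the oscillation of $v$ on each $\tau$-interval equals the nodal difference of $u$ there, the bound of the squared nodal differences by localized $\rmH^{1/2}$ seminorms summed with finite overlap as in Proposition \ref{prop:bar}, and the regime $|y|\cgeq\tau$ handled by the triangle inequality with the approximation and inverse estimates. Those parts are sound. The gap is in the step that is supposed to produce the factor $|y|$ when $|y|<\tau$. Bounding $|v(x+y)-v(x)|$ pointwise by the oscillation of $v$ on the at most two intervals met by $[x,x+y]$ and then squaring and integrating $x$ over a period gives only $\|v-\tau_y v\|_{\rmL^2}^2\cleq\tau\sum_k\mathrm{osc}_{I_k}(v)^2$, with $\tau$ and not $|y|$ in front: nothing in that argument restricts the set of $x$ where the pointwise bound is saturated to have measure $\cleq|y|$. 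In Proposition \ref{prop:bar} this restriction is automatic, because $P_\tau u-\tau_yP_\tau u$ vanishes outside $|y|$-neighbourhoods of the nodes; for the upwinded $v$ it is not, since $\dot v$ may be concentrated in an internal layer of width $\alpha/\beta$ unrelated to $y$, and for $|y|\ll\tau$ the factor $\tau$ is useless. So the key claim $\|v-\tau_y v\|_{\rmL^2}^2\cleq|y|\sum_k\mathrm{osc}_{I_k}(v)^2$ is true but not proved by the reasoning you give.

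The missing ingredient is a bounded-variation translation estimate rather than an oscillation estimate. Since $v$ is monotone on each subinterval, on a macro-interval $\tilde I_k$ one has $\|v-\tau_y v\|_{\rmL^1}\leq|y|\int_{\tilde I_k}|\dot v|$ (Fubini) and $\|v-\tau_y v\|_{\rmL^\infty}\leq\int_{\tilde I_k}|\dot v|$, hence
\begin{equation}
\|v-\tau_y v\|_{\rmL^2(I_k)}\ \cleq\ |y|^{1/2}\int_{\tilde I_k}|\dot v|\ \cleq\ |y|^{1/2}\,|\max_{\tilde I_k} u-\min_{\tilde I_k} u|\ \cleq\ |y|^{1/2}\,|u|_{\rmH^{1/2}(\tilde I_k)},
\end{equation}
uniformly in $\alpha$ and the P\'eclet number, the middle step using that $\int_{\tilde I_k}|\dot v|$ is the sum of the nodal oscillations of $u$. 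Both sides scale identically under dilation, so it suffices to prove this on the reference macro-interval and then sum over $k$ with finite overlap. This is precisely the displayed chain of inequalities in the paper's proof; with it inserted in place of your oscillation step, your argument closes, and the rest of your proposal (the $|y|\cgeq\tau$ regime and the nodal-difference bound) needs no change.
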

\begin{proof}
We first write, on the reference interval $]0, 1[$, included in the reference macro-interval $]-1, 2[$, for $u$ which is continuous and affine on the three subintervals, and $v$ its upwinded interpolant, an estimate which is independent of P\'eclet number, for $|y| \leq 1$:
\begin{align}
\| v - \tau_y v \|_{\rmL^2(0,1)} & \cleq |y|^{1/2} \, \int_{-1}^2 |\dot v| ,\\
& \cleq |y|^{1/2} \, |\max u - \min u| ,\\
& \cleq |y|^{1/2} \, | u|_{\rmH^{1/2}(-1, 2)} .
\end{align}
From there one proceeds as in the proof of Proposition \ref{prop:bar}.
\end{proof}

\begin{proposition} We have an estimate, for functions $u \in \rmH^1(\bbR)$, valid for all $\alpha$:
\begin{equation}
|u|_{\rmH^{1/2}}^2 \cleq \| u\|_{\rmL^2}^2 +  |\log(\alpha)|\,  |u|_{\rmH_w^{1/2}}^2 + \alpha \int |\dot u|^2.
\end{equation}
\end{proposition}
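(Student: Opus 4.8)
The plan is to pass to the Fourier side and interpolate the weak seminorm against the weighted first-order term, balancing an auxiliary exponent exactly as in Corollary~\ref{cor:comp} and Proposition~\ref{prop:varal}. By Plancherel, up to universal constants, $\|u\|_{\rmL^2}^2 \simeq \int |\calF u(\xi)|^2 \rmd \xi$, $\int |\dot u|^2 \simeq \int |\xi|^2 |\calF u(\xi)|^2 \rmd \xi$, and $|u|_{\rmH^{1/2}}^2 = \int |\xi|\, |\calF u(\xi)|^2 \rmd \xi$; so everything reduces to estimating $\int |\xi|\, |\calF u(\xi)|^2 \rmd \xi$.

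First I would record the elementary pointwise inequality
\[
|\xi| \ \leq\ 1 + \alpha |\xi|^2 + \alpha^{-2\epsilon} |\xi|^{1 - 2\epsilon},
\]
valid for every $\xi \in \bbR$, every $\alpha > 0$, and every $\epsilon \in {}]0, \tfrac{1}{2}[$: if $|\xi| \leq 1$ the first term dominates; if $\alpha|\xi| \geq 1$ the second does; and in the range $1 \leq |\xi| \leq 1/\alpha$ one has $|\xi|^{2\epsilon} \leq \alpha^{-2\epsilon}$, so the third term dominates. Integrating against $|\calF u(\xi)|^2$ gives
\[
|u|_{\rmH^{1/2}}^2 \ \cleq\ \|u\|_{\rmL^2}^2 + \alpha \int |\dot u|^2 + \alpha^{-2\epsilon} \int |\xi|^{1 - 2\epsilon} |\calF u(\xi)|^2 \rmd \xi .
\]

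To handle the last integral I would use the injection $\rmH^{1/2}_w \hookrightarrow \rmH^{1/2 - \epsilon}$ from Proposition~\ref{prop:wh}, but in a slightly sharpened form so as not to spoil the coefficient of $\|u\|_{\rmL^2}^2$: rerunning the computation there with the Slobodetski integral split at $|y| = 1$ — using $\|u - \tau_y u\|_{\rmL^2}^2 \leq |y|\, |u|_{\rmH^{1/2}_w}^2$ on $|y| \leq 1$ and $\|u - \tau_y u\|_{\rmL^2}^2 \leq 4\|u\|_{\rmL^2}^2$ on $|y| > 1$, and invoking the uniform equivalence of the Slobodetski and Fourier seminorms of $\rmH^{1/2 - \epsilon}$ (valid since $\tfrac{1}{2} - \epsilon$ stays in a fixed compact subinterval of $]0,1[$ once $\epsilon \leq \tfrac{1}{4}$) — yields
\[
\int |\xi|^{1 - 2\epsilon} |\calF u(\xi)|^2 \rmd \xi \ \cleq\ \tfrac{1}{\epsilon}\, |u|_{\rmH^{1/2}_w}^2 + \|u\|_{\rmL^2}^2 ,
\]
with the constant in front of $\|u\|_{\rmL^2}^2$ bounded as $\epsilon \to 0$.

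Finally I would choose the exponent. For $\alpha$ small enough that $1/|\log\alpha| \leq \tfrac{1}{4}$, set $\epsilon = 1/|\log\alpha|$: then $\alpha^{-2\epsilon} = e^2$ is bounded while $1/\epsilon = |\log\alpha|$, so combining the last two displays gives precisely $|u|_{\rmH^{1/2}}^2 \cleq \|u\|_{\rmL^2}^2 + |\log\alpha|\, |u|_{\rmH^{1/2}_w}^2 + \alpha \int |\dot u|^2$. For $\alpha$ in the remaining, bounded, part of its range the inequality is routine: the crude split $\int |\xi|\, |\calF u|^2 \leq M \|u\|_{\rmL^2}^2 + M^{-1} \int |\xi|^2 |\calF u|^2$ with $M \sim 1/\alpha$ already gives $|u|_{\rmH^{1/2}}^2 \cleq \|u\|_{\rmL^2}^2 + \alpha \int |\dot u|^2$, and the nonnegative term $|\log\alpha|\, |u|_{\rmH^{1/2}_w}^2$ only helps. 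I do not expect a genuine obstacle here: this is an interpolation-type estimate, and the only points requiring attention are keeping the coefficient of $\|u\|_{\rmL^2}^2$ at $O(1)$ — whence the sharpened form of Proposition~\ref{prop:wh} rather than a black-box application — and the by-now-familiar observation that the logarithm is produced exactly by the choice $\epsilon = 1/|\log\alpha|$.
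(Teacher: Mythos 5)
Your proof is correct, but it takes a genuinely different route from the paper's. The paper works entirely on the real-variable side: it writes the Slobodetski seminorm and splits the $y$-integral into the three ranges $]0,\alpha[$, $]\alpha,1[$ and $]1,\infty[$, using $\|u-\tau_y u\|_{\rmL^2}^2\cleq |y|^2\|\dot u\|_{\rmL^2}^2$, $\|u-\tau_y u\|_{\rmL^2}^2\leq |y|\,|u|_{\rmH^{1/2}_w}^2$ and $\|u-\tau_y u\|_{\rmL^2}^2\cleq\|u\|_{\rmL^2}^2$ on the respective ranges; the logarithm then drops out in one line from $\int_\alpha^1 y^{-1}\,\rmd y=|\log\alpha|$, with no Fourier transform, no auxiliary exponent $\epsilon$, and no need for uniform-in-$s$ equivalence of the two seminorms. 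You instead go to the Fourier side, use the pointwise bound $|\xi|\leq 1+\alpha|\xi|^2+\alpha^{-2\epsilon}|\xi|^{1-2\epsilon}$, pass through $\rmH^{1/2-\epsilon}$ via a sharpened Proposition \ref{prop:wh}, and generate the logarithm by the choice $\epsilon=1/|\log\alpha|$, exactly as in Corollary \ref{cor:comp} and Proposition \ref{prop:varal}; your three frequency regimes ($|\xi|\leq 1$, $1\leq|\xi|\leq 1/\alpha$, $\alpha|\xi|\geq 1$) are in effect the dual picture of the paper's three $y$-ranges. Your route buys stylistic consistency with the neighbouring proofs and reuses existing machinery, at the cost of being more roundabout: it needs the uniform (in $s$ near $1/2$) equivalence of the Slobodetski and Fourier seminorms, a limiting-exponent bookkeeping, and a separate treatment of $\alpha$ bounded away from $0$, none of which the paper's direct split requires. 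You were right, and it was necessary, to reprove Proposition \ref{prop:wh} in the split form keeping the $\rmL^2$ and weak-seminorm contributions separate — a black-box application would have put a spurious $|\log\alpha|$ in front of $\|u\|_{\rmL^2}^2$ and yielded a strictly weaker statement.
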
 
\begin{proof}
We write the Slobodetski seminorm (letting $(-)$ stand for the similar term with $y <0$):
\begin{equation}
|u|_{\rmH^{1/2}}^2 = \left(\int_0^\alpha + \int_\alpha^1 +  \int_1^\infty\right)\left (\int |u(x + y) - u(x)|^2\rmd x \right )\frac{\rmd y}{y^2} + (-).
\end{equation}
On $]0, \alpha[$ we use:
\begin{equation}
\| u - \tau_y u \|_{\rmL^2}^2 \cleq |y|^2 \| \dot u\|_{\rmL^2}^2.
\end{equation}
On $]\alpha, 1[$ we use:
\begin{equation}
\int_\alpha^1 \frac{1}{y} \rmd y = |\log (\alpha)|.
\end{equation}
On  $]1, \infty [$ we use:
\begin{equation}
\int_1^\infty \frac{1}{y^2} \rmd y = 1.
\end{equation}

This gives the three announced terms in reverse order.
\end{proof}

\begin{proposition}\label{prop:asav}
  Let $u$ be continuous piecewise affine and $v$ be the upwinded interpolant. Then:
\begin{equation}
\alpha \int | \dot v|^2 \cleq |u|_{\rmH^{1/2}}^2 +  \alpha \int | \dot u |^2.
\end{equation} 
\end{proposition}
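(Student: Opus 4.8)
The plan is to combine the exact identity of Proposition~\ref{prop:phi} with an elementary inverse estimate bounding $\tau\int|\dot u|^2$ by $|u|_{\rmH^{1/2}}^2$. First, Proposition~\ref{prop:phi} gives
\[
\alpha\int|\dot v|^2 = \alpha\,\Phi(p)\int|\dot u|^2, \qquad p = \frac{\beta\tau}{\alpha}.
\]
Writing $\Phi(p) = \tfrac{p}{2}\,\tfrac{e^p+1}{e^p-1}$ and using $\alpha p = \beta\tau$ together with the elementary inequality $e^p - 1\geq p$ for $p\geq 0$, we obtain
\[
\alpha\,\Phi(p) = \frac{\beta\tau}{2}\cdot\frac{e^p+1}{e^p-1} = \frac{\beta\tau}{2} + \frac{\beta\tau}{e^p-1} \leq \frac{\beta\tau}{2} + \frac{\beta\tau}{p} = \frac{\beta\tau}{2} + \alpha .
\]
Since $\beta$ is fixed data this yields $\alpha\int|\dot v|^2 \cleq (\tau+\alpha)\int|\dot u|^2$, uniformly in the P\'eclet number.

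The remaining point is the inverse estimate $\tau\int|\dot u|^2 \leq |u|_{\rmH^{1/2}}^2$. Let $(I_k)_k$ be the mesh intervals and let $s_k$ denote the constant slope of $u$ on $I_k$. For $x,x'\in I_k$ one has $|u(x)-u(x')| = |s_k|\,|x-x'|$, so the diagonal block of the Slobodetski double integral contributes
\[
\iint_{I_k\times I_k}\frac{|u(x)-u(x')|^2}{|x-x'|^2}\,\rmd x\,\rmd x' = s_k^2\,|I_k|^2 = s_k^2\,\tau^2 .
\]
The blocks $I_k\times I_k$ are pairwise disjoint in the plane, hence summing over $k$ gives $\sum_k s_k^2\tau^2 \leq |u|_{\rmH^{1/2}}^2$; and $\sum_k s_k^2\tau^2 = \tau\sum_k s_k^2\,|I_k| = \tau\int|\dot u|^2$, which is the claim. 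Combining, $\alpha\int|\dot v|^2 \cleq \tau\int|\dot u|^2 + \alpha\int|\dot u|^2 \cleq |u|_{\rmH^{1/2}}^2 + \alpha\int|\dot u|^2$.

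I do not expect any serious obstacle. The only points requiring a small observation are, first, that the prefactor $\alpha\Phi(p)$ is bounded by $\tfrac{1}{2}\beta\tau + \alpha$ uniformly in $p$, so that the high-P\'eclet contribution is controlled purely by $\tau$; and second, that $\tau\|\dot u\|_{\rmL^2}^2$ is an inverse-type quantity dominated by the $\rmH^{1/2}$-seminorm already through its local, near-diagonal part, so that no scaling or covering argument beyond the disjointness of the diagonal blocks is needed.
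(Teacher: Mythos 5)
Your proof is correct and takes essentially the same route as the paper: the exact identity of Proposition~\ref{prop:phi} combined with the inverse estimate $\tau\int|\dot u|^2 \cleq |u|_{\rmH^{1/2}}^2$. The only differences are minor refinements: you replace the paper's two-regime split ($p\leq 1$ versus $p\geq 1$, using $\Phi(p)\cleq 1$ and $\Phi(p)\cleq p$) by the single uniform bound $\alpha\Phi(p)\leq \tfrac{1}{2}\beta\tau+\alpha$, and you spell out, via the diagonal blocks of the Slobodetski seminorm, the inverse inequality that the paper merely invokes.
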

\begin{proof}
We distinguish two regimes, according to the P\'eclet number $p = \beta \tau /\alpha$:

\noindent -- for $p \leq 1$ we have $\Phi(p) \cleq 1$ so :
\begin{equation}
\alpha \int | \dot v|^2 \cleq  \alpha \int | \dot u |^2.
\end{equation}

\noindent -- for $p \geq 1$ we have $\Phi(p) \cleq p $ so :
 \begin{align}
\alpha \int | \dot v|^2 & \cleq  \tau \int | \dot u |^2,\\
& \cleq |u |^2_{\rmH^{1/2}},
\end{align}
from an inverse inequality. 

Together these two regimes give the announced bound.
\end{proof}

Combining the previous propositions we get:
\begin{proposition}\label{prop:halfint}  Let $u$ be continuous piecewise affine and $v$ be the upwinded interpolant. Then:
\begin{equation}
| v |_{\rmH^{1/2}} \cleq \| u\|_{\rmL^2} +   |\log(\alpha)|^{1/2} |u|_{\rmH^{1/2}} +  (\alpha \int |\dot u|^2)^{1/2}.
\end{equation} 
\end{proposition}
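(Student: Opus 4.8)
The plan is to read off the estimate by combining the propositions already established in this subsection; no new idea is needed. First I would invoke the proposition stating that, for any $w \in \rmH^1(\bbR)$ and any $\alpha$,
\[
|w|_{\rmH^{1/2}}^2 \cleq \|w\|_{\rmL^2}^2 + |\log(\alpha)|\,|w|_{\rmH^{1/2}_w}^2 + \alpha \int |\dot w|^2,
\]
applied to $w = v$; this is legitimate since the upwinded interpolant $v$, being continuous and piecewise a linear combination of a constant and an exponential, belongs to $\rmH^1(\bbR)$. It then suffices to bound each of the three terms on the right, evaluated at $v$, by the right-hand side of the claimed inequality.

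Next I would treat the three terms in turn. For the $\rmL^2$-term I would use $\|v\|_{\rmL^2} \cleq \|u\|_{\rmL^2}$: on each $\tau$-interval the maximum principle for upwinded functions (already used above) forces $v$ to stay between the two nodal values of $u$, so $|v|$ is pointwise at most the larger of the two nodal absolute values there, and a one-interval computation bounds $\int_{\text{interval}}|v|^2$ by a constant times $\int_{\text{interval}}|u|^2$; summing over intervals gives the claim. This is exactly the half of the earlier two-sided statement $\int|v|^2 \ceq \int|u|^2$ that does not require the boundary conditions. For the second term I would quote the proposition $|v|_{\rmH^{1/2}_w} \cleq |u|_{\rmH^{1/2}}$, which gives $|\log(\alpha)|\,|v|_{\rmH^{1/2}_w}^2 \cleq |\log(\alpha)|\,|u|_{\rmH^{1/2}}^2$. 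For the third term I would quote Proposition \ref{prop:asav}, namely $\alpha \int |\dot v|^2 \cleq |u|_{\rmH^{1/2}}^2 + \alpha \int |\dot u|^2$.

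Assembling these gives
\[
|v|_{\rmH^{1/2}}^2 \cleq \|u\|_{\rmL^2}^2 + |\log(\alpha)|\,|u|_{\rmH^{1/2}}^2 + |u|_{\rmH^{1/2}}^2 + \alpha \int |\dot u|^2,
\]
and the spare $|u|_{\rmH^{1/2}}^2$ is absorbed into the logarithmic term (in the regime of interest $\alpha$ is small, so $|\log(\alpha)| \cgeq 1$; when $\alpha$ stays bounded away from $0$ this proposition carries no content). Finally I would take square roots and use $(a+b+c)^{1/2} \le a^{1/2} + b^{1/2} + c^{1/2}$ for $a,b,c\ge 0$, which yields the stated bound.

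I do not expect a genuine obstacle here, since all the substance sits in the propositions being combined. The only point needing a little care is the $\rmL^2$-term: the earlier equivalence $\int|v|^2 \ceq \int|u|^2$ is stated with a boundary hypothesis, so I would make explicit that here only the upper bound $\|v\|_{\rmL^2}\cleq\|u\|_{\rmL^2}$ is used and that this half holds unconditionally, directly from the maximum principle.
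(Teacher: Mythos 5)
Your proposal is correct and is essentially the paper's own argument: the paper gives no written proof beyond ``combining the previous propositions,'' and the combination you spell out (the $|\log(\alpha)|$-weighted interpolation-type estimate applied to $v$, together with $\|v\|_{\rmL^2}\cleq\|u\|_{\rmL^2}$, $|v|_{\rmH^{1/2}_w}\cleq|u|_{\rmH^{1/2}}$, and Proposition \ref{prop:asav}) is exactly what is intended. Your explicit remark that only the unconditional upper half of $\int|v|^2\ceq\int|u|^2$ is needed, via the maximum principle, is a correct and welcome clarification.
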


For the above propositions, minimal changes occur when we replace functions from $\bbR$ to $\bbR$, by functions from $\bbR$ to some fixed Hilbert space. From there, the techniques can be extended to more complicated situations, such as spaces of the form (\ref{eq:defy}), where values in two different Hilbert spaces are considered.

\section{\label{sec:convdiff} Convection diffusion}

For a function $u$ defined on a domain $U=]0,T[ \times V$, derivation along the first axis will be denoted $u \mapsto \dot u$, and derivation along the remaining axes (in $V$) will be denoted $u \mapsto  \partial u$. For convenience we refer to the first variable as time and the second as space. We are interested in an elliptic boundary value problem on space-time, namely: 
\begin{equation}
-\alpha \Delta u + \beta \cdot \nabla u + \gamma u= f,
\end{equation}
with homogeneous Dirichlet boundary condition on $\partial U$. The convection takes place along the first axis (time). We rewrite the equation as:
\begin{equation}
-\alpha \ddot u + \beta \dot u + \gamma u - \alpha \partial^2 u = f.
\end{equation}
We consider the following setup:
\begin{itemize}
\item $\alpha >0$ is a parameter, varying in the interval $]0, \alpha_0[$, for some $\alpha_0 >0$, keeping in mind that it is the asymptotic behavior as $\alpha \to 0$ that is of biggest interest.
\item $\beta >0$ is a fixed constant.
\item $\gamma$ is a fixed function in $\rmL^\infty(U)$, such that $\gamma (x)\geq 0$ for all $x\in U$.
\end{itemize}

Given a time-step $\tau$, let $Z^0_\tau$ denote the space of $\tau$-piecewise constant functions on $[0,T]$. Also, let $Z^1_\tau$ denote the space of continuous $\tau$-piecewise affine ones, which are $0$ at the extremities of the interval. Finally let $Z^1_\tau(\alpha)$ denote the space of continuous functions which are $\tau$-piecewise of the upwinded form (\ref{eq:cpe}), which are also $0$ at the extremities.

We consider the following variational formulation. Find $u \in Z^1_\tau \otimes X_\sigma$ such that for all $v \in Z^1_\tau(\alpha)\otimes X_\sigma$:
\begin{equation}\label{eq:varfin}
\int \dot u (\alpha \dot v + \beta v) + \int \gamma u v + \alpha \int \partial u \cdot \partial v =  \int f v.
\end{equation}
We set out to prove an inf-sup condition.

\emph{In the following, one should keep in mind the one-dimensional problem, in which $X_\sigma$ is replaced by $\bbR$ and the last term on the left hand side in (\ref{eq:varfin}) disappears. Equivalently one can consider the multi-dimensional problem without crosswind diffusion.  In the transverse direction one then only needs to consider $\rmL^2(V)$ norms.}

\point Given a trial function $u \in Z^1_\tau \otimes X_\sigma$ we define an upwinded test function in three steps.
\begin{itemize}
\item $w = \calH u$ is Hilbert transform along the first axis. Notice by the way that $w \in \rmL^2(\bbR)\otimes X_\sigma$.
\item $\overline w \in Z^0_\tau \otimes X_\sigma$  is the $\rmL^2$-projection of $w$ onto $\tau$-piecewise constant functions on $\bbR$, with values in $X_\sigma$.
\item $v$ solves $\alpha \dot v + \beta v = \beta \overline w + c $, with $v(t,x)= 0$ for $t=0$ or $t=T$  and $x \in V$ and some function $c\in X_\sigma$ considered on $U$ as constant in time.
\end{itemize}
We notice that we do obtain $v\in Z^1_\tau(\alpha)\otimes X_\sigma$.  Moreover:
\begin{align}
\int \dot u (\alpha \dot v + \beta v)  &= \int \dot u (\beta \overline w + c),\\
& = \int \dot u \beta w. 
\end{align}
This gives:
\begin{align}
\int \dot u (\alpha \dot v + \beta v) \cgeq |u|_{\rmH^{1/2}}^2.
\end{align}

\emph{Next, to bound $v$, we write $v= g - f$ with $g= G_\alpha \ast \overline w$. Then $f$ is the function coinciding with $g$ at extremities and solving $\alpha \ddot f + \beta \dot f = 0$.}

\point Concerning $g$ we have, since $G_\alpha$ has $\rmL^1(\bbR)$-norm one, whatever the Hilbert space $X$:
\begin{equation}
\|g\|_{\rmL^2(X)} \leq \| w \|_{\rmL^2(X)},
\end{equation}
From Corollary \ref{cor:comp} we get:
\begin{align}
|g|_{\rmH^{1/2}} & \cleq |\log(\alpha)|^{1/2} \| w\|_{\rmH^{1/2}},\\
& \cleq  |\log(\alpha)|^{1/2} \| u\|_{\rmH^{1/2}},
\end{align}
From Proposition \ref{prop:varal} we get:
\begin{equation}
(\alpha \int |\dot g|^2)^{1/2} \cleq |\log(\alpha)|^{1/2} \| u\|_{\rmH^{1/2}}.
\end{equation}
We will also use that $g$ is continuous with a bound deduced from Proposition \ref{prop:linfal}:
\begin{align}
\| g\|_{\rmL^\infty} &\leq \| \overline w\|_{\rmL^\infty} \leq  \| w\|_{\rmL^\infty}, \\
& \cleq |\log(\alpha)|^{1/2}( \| w\|_{\rmH^{1/2}}^2    + \alpha \int |\dot w|^2 )^{1/2}, \\
& \cleq |\log(\alpha)|^{1/2}( \| u\|_{\rmH^{1/2}}^2    + \alpha \int |\dot u|^2 )^{1/2}.
\end{align}

\point Concerning $f$ we have the explicit formula:
\begin{equation}
f(t) = g(0)\frac{\exp(-\frac{\beta t}{\alpha}) -  \exp(-\frac{\beta T}{\alpha})}{1 -  \exp(-\frac{\beta T}{\alpha}  )} + g(T)\frac{1 - \exp(-\frac{\beta t}{\alpha})}{1 - \exp(-\frac{\beta T}{\alpha})}.
\end{equation}
We may calculate the Slobodetski seminorm of $f$ on the interval from the expression:
\begin{equation}\label{eq:slobe}
| \exp(-\frac{\beta t}{\alpha}) |^2_{\rmH^{1/2}(0,T)} = \int_0^T|\exp(-\frac{\beta t}{\alpha})|^2 \rmd t \int_0^T \frac{| 1- \exp(-\frac{\beta s}{\alpha})|^2}{s^2} \rmd s.
\end{equation}
The first integral is of order $\alpha$. To estimate the second integral we integrate first from $0$ to $\alpha/\beta$ and then from $\alpha/\beta$ to $T$. The first term is then of order $1/\alpha$ and so is the second one. We conclude that the Slobodetski seminorm (\ref{eq:slobe}) is uniformly bounded as a function of $\alpha$. This gives, for $f$:
\begin{equation}
\|f\|_{\rmH^{1/2}(0,T)}  \cleq \max\{ |g(0)|, |g(T)| \}.
\end{equation}
We also have, from explicit computation:
\begin{equation}
\alpha \int |\dot f|^2 \cleq \max\{ |g(0)|, |g(T)| \}^2.
\end{equation}

\point We now assess a second type of test functions. We denote by $b^\alpha$ the bilinear form defined by:
\begin{equation}
b^\alpha(u,v) = \int \dot u (\alpha \dot v + \beta v).
\end{equation}
Le $\phi$ be a smooth function. We have:
\begin{align}
b^\alpha(u, \phi u) & = \int \alpha \phi |\dot u|^2 + \int (-\alpha \ddot \phi - \beta \dot \phi) \frac{|u|^2}{2}.
\end{align}
Likewise, if $\psi$ is a smooth function, we have:
\begin{align}
b^\alpha(\psi v, v) & = \int \alpha \psi |\dot v|^2 + \int (-\alpha \ddot \psi + \beta \dot \psi)\frac{|v|^2}{2}.
\end{align} 
In the following we let $\phi$ be defined by:
\begin{equation}
\phi (t) = \exp (-t/\kappa).
\end{equation}
We also let $\psi$ be the inverse of $\phi$. We get for a fixed moderate $\kappa$ (say $\kappa = \alpha_0/(2 \beta)$ expressed in terms of the upperbound $\alpha_0$ on $\alpha$):
\begin{align}
-\alpha \ddot \phi - \beta \dot \phi = (- \frac{\alpha}{\kappa^2} + \frac{\beta}{\kappa}) \phi \ceq 1,\\
-\alpha \ddot \psi + \beta \dot \psi = (- \frac{\alpha}{\kappa^2} + \frac{\beta}{\kappa} )\psi \ceq 1.
 \end{align}
By this trick, the test function enables one to dominate also the $\rmL^2$-norm of $u$.
 
\point Now, in the case where $u$ is a discrete trial function, let $v$ be the test function obtained as the upwinded function coinciding with $\phi u$ at vertices. We may consider that $v$ is obtained in two steps, first interpolating $\phi u$ onto piecewise affine functions and then from there to the upwinded functions. The first step is stable, with stable inverse, in the norms of interest by elementary arguments, for $\tau$ small enough. 

We notice the remarkable fact that:
\begin{align}
b^\alpha(u, v) & = b^\alpha(\psi v, v),
\end{align}
because on any $\tau$-interval, $u$ and $\psi v$ coincide at vertices and $v$ is upwinded.
This gives:
\begin{align}
b^\alpha(u, v) & \cgeq  \alpha \int |\dot v|^2 + \int |v|^2,\\
& \cgeq \alpha \int |\dot u |^2 + \int |u|^2,
\end{align} 
We also write:
\begin{align}
\int \gamma u v \geq \int \gamma \phi |u|^2 - \|\gamma\|_{\rmL^\infty} \| u \|_{\rmL^2} \| \phi u - v \|_{\rmL^2} .
\end{align}
We estimate (where the fat dot denotes time derivation):
\begin{align}\label{eq:uvapp}
\int | \phi u - v|^2 &\cleq \tau^2 \int |(\phi u)^\bs|^2,\\
& \cleq \tau^2 (\int |\dot u |^2 + \int |u|^2),
\end{align}
 We deduce that for $\tau$ sufficiently small (independently of $\alpha$) we have:
\begin{equation}\label{eq:sfin}
\int \dot u (\alpha \dot v + \beta v) + \int \gamma u v  \cgeq    \int |u|^2.
\end{equation}

\point We now provide bounds on $v$. Recall that we consider $v$ as obtained in two steps, see the previous point. The stability of the second step is handled by Propositions \ref{prop:asav} and \ref{prop:halfint}. We get:
 \begin{equation}
 | v |_{\rmH^{1/2}} + (\alpha \int |\dot v|^2)^{1/2}  \cleq \| u\|_{\rmL^2} +   |\log(\alpha)|^{1/2} |u|_{\rmH^{1/2}} +  (\alpha \int |\dot u|^2)^{1/2}.
\end{equation} 
In other words we have a stability estimate of order $|\log(\alpha)|^{1/2}$.

\point We now combine the two test functions constructed above, with a parameter $\lambda >0$. Let's call the first one, constructed essentially by projecting the Hilbert transform and convolving, $v_p$, and the second one, constructed by multiplying by $\phi$ and interpolating, $v_i$.
We conclude that the test function:
\begin{equation}\label{eq:testop}
v= v_p + \lambda v_i,
\end{equation}
for $\lambda$ sufficiently larger than $|\log(\alpha)|^{1/2}$ gives and inf-sup condition, deteriorating no faster than $|\log(\alpha)|^{-1}$. To be more precise, the condition on $\lambda$ is that $\lambda \geq C |\log(\alpha)|^{1/2}$, with $C$ sufficiently large (independent of the parameters).

\begin{theorem} Consider the two norms:
\begin{align}
\| u\|_\alpha & = \|u\|_{\rmH^{1/2}_{00}(0,T)} + \alpha^{1/2}\|\dot u\|_{\rmL^2(0,T)},\\
\| v\|_\alpha' & = \|v\|_{\rmH^{1/2}(0,T)} + \alpha^{1/2}\|\dot v\|_{\rmL^2(0,T)}.
\end{align}
Then, for $\tau$ sufficiently small independently of $\alpha$, for any test function $u$, constructing a trial function $v$ by (\ref{eq:testop}) for $\lambda$ sufficiently larger than $|\log(\alpha)|^{1/2}$, we have:
\begin{equation}
\| v\|_\alpha' \cleq |\log(\alpha)| \| u\|_\alpha,
\end{equation}
and:
\begin{equation}
\int \dot u (\alpha \dot v + \beta v) + \int \gamma u v  \cgeq  \| u\|_\alpha^2.
\end{equation}
\end{theorem}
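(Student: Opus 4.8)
The plan is to read the statement off the chain of ``points'' just completed -- this theorem is their synthesis -- by taking the test function $v = v_p + \lambda v_i$ of (\ref{eq:testop}) and adding up the estimates already proved, with $\lambda$ tuned to balance the two halves of the construction.

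For the coercivity inequality I would expand, writing $a(u,v) = \int\dot u\,(\alpha\dot v + \beta v) + \int\gamma u v$,
\[ a(u,v) = \big(b^\alpha(u,v_p) + \int\gamma u v_p\big) + \lambda\big(b^\alpha(u,v_i) + \int\gamma u v_i\big), \qquad b^\alpha(u,v) := \int\dot u\,(\alpha\dot v + \beta v). \]
The $v_p$-block contributes $b^\alpha(u,v_p) = \beta\int\dot u\,w = \beta|u|_{\rmH^{1/2}}^2$ with $w = \calH u$, since the projection onto $\tau$-piecewise constants and the additive constant drop out against $\dot u$ (piecewise constant, zero integral). The $v_i$-block contributes, by the $\phi/\psi$ trick, $b^\alpha(u,v_i) = b^\alpha(\psi v_i,v_i) \cgeq \alpha\int|\dot v_i|^2 + \int|v_i|^2 \cgeq \alpha\int|\dot u|^2 + \int|u|^2$, together with $\int\gamma u v_i = \int\gamma\phi|u|^2 + \int\gamma u(v_i - \phi u) \geq -\big|\int\gamma u(v_i - \phi u)\big|$ since $\gamma,\phi\geq 0$. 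Then I absorb the two cross-terms. From $\|v_p\|_{\rmL^2}\leq\|g\|_{\rmL^2}+\|f\|_{\rmL^2}\cleq\|u\|_{\rmL^2}+\|g\|_{\rmL^\infty}\cleq|\log(\alpha)|^{1/2}\|u\|_\alpha$ (Corollary \ref{cor:comp}, Propositions \ref{prop:varal} and \ref{prop:linfal}, and (\ref{eq:slobe})) one gets $\big|\int\gamma u v_p\big|\cleq|\log(\alpha)|^{1/2}\|u\|_{\rmL^2}\|u\|_\alpha$, and $\big|\int\gamma u(v_i-\phi u)\big|\cleq\tau\|u\|_{\rmL^2}(\|u\|_{\rmL^2}+\|\dot u\|_{\rmL^2})$ by (\ref{eq:uvapp}). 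Taking $\lambda\cgeq|\log(\alpha)|^{1/2}$ large and $\tau$ small, Young's inequality sends the $\|u\|_{\rmL^2}^2$ shares into $\lambda\int|u|^2$ and the $|u|_{\rmH^{1/2}}^2$, $\alpha\int|\dot u|^2$ shares below the gains already secured; the $\tau$-errors are disposed of by the P\'eclet split of Proposition \ref{prop:asav} (for $p\cgeq 1$ the inverse inequality makes $\alpha\int|\dot u|^2\cleq|u|_{\rmH^{1/2}}^2$, which is free from $b^\alpha(u,v_p)$; for $p\cleq 1$, $\tau^2\int|\dot u|^2\cleq\tau\,\alpha\int|\dot u|^2$); and the Hardy part of $\|u\|_{\rmH^{1/2}_{00}}^2$ beyond $\|u\|_{\rmL^2}^2+|u|_{\rmH^{1/2}}^2$ is controlled by $\alpha\int|\dot u|^2$ over a boundary-layer-width neighbourhood of $0,T$ together with $|u|_{\rmH^{1/2}}^2$, using that $u$ is piecewise affine and vanishes at the endpoints. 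Collecting terms gives $a(u,v)\cgeq\|u\|_\alpha^2$.

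For the continuity estimate I would bound $\|v\|_\alpha'\leq\|v_p\|_\alpha'+\lambda\|v_i\|_\alpha'$. For $v_p = g-f$: Corollary \ref{cor:comp} and Proposition \ref{prop:varal} give $|g|_{\rmH^{1/2}}+(\alpha\int|\dot g|^2)^{1/2}\cleq|\log(\alpha)|^{1/2}\|u\|_\alpha$; Proposition \ref{prop:linfal} gives $\|g\|_{\rmL^\infty}\cleq|\log(\alpha)|^{1/2}\|u\|_\alpha$; and the explicit formula for $f$ with the uniformly bounded Slobodetski seminorm (\ref{eq:slobe}) gives $\|f\|_{\rmH^{1/2}}+(\alpha\int|\dot f|^2)^{1/2}\cleq\max\{|g(0)|,|g(T)|\}\leq\|g\|_{\rmL^\infty}$; hence $\|v_p\|_\alpha'\cleq|\log(\alpha)|^{1/2}\|u\|_\alpha$. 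For $v_i$, viewed as the upwinded interpolant of the piecewise affine interpolant of $\phi u$ (the first step stable, stably invertible for $\tau$ small), Propositions \ref{prop:asav} and \ref{prop:halfint} give $|v_i|_{\rmH^{1/2}}+(\alpha\int|\dot v_i|^2)^{1/2}\cleq\|\phi u\|_{\rmL^2}+|\log(\alpha)|^{1/2}|\phi u|_{\rmH^{1/2}}+(\alpha\int|(\phi u)^{\bs}|^2)^{1/2}\cleq|\log(\alpha)|^{1/2}\|u\|_\alpha$, while $\|v_i\|_{\rmL^2}\cleq\|u\|_{\rmL^2}$; hence $\|v_i\|_\alpha'\cleq|\log(\alpha)|^{1/2}\|u\|_\alpha$. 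With $\lambda\ceq|\log(\alpha)|^{1/2}$ this gives $\|v\|_\alpha'\cleq(1+\lambda)|\log(\alpha)|^{1/2}\|u\|_\alpha\cleq|\log(\alpha)|\|u\|_\alpha$. (In the multi-dimensional problem without crosswind diffusion one only replaces these one-variable estimates by their $\rmL^2(V)$-valued counterparts, cf.\ the end of \S\ref{sec:cont}.)

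The step I expect to be the main obstacle is the uniform-in-$\alpha$ treatment of the zeroth-order ($\gamma$) term and of the boundary layer: the cross-term coming from $v_p$ unavoidably carries the factor $|\log(\alpha)|^{1/2}$ inherited from the $\rmL^\infty$-bound on $g$, which is why $\lambda$ must be taken large, while the interpolation defect $v_i-\phi u$ and the Hardy part of $\|u\|_{\rmH^{1/2}_{00}}$ have to be absorbed without a disguised condition such as $\tau\cleq\alpha$ -- this is exactly what makes the P\'eclet case split indispensable. Everything else is careful bookkeeping of the logarithmic losses so that the two displayed inequalities combine, via Proposition \ref{prop:isa} applied to the linear map $u\mapsto v$ with $C_1\ceq|\log(\alpha)|$ and $C_2\ceq 1$, into an inf-sup constant of the announced order $|\log(\alpha)|^{-1}$.
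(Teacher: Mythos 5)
Your construction follows the paper's own route: the same composite test function $v=v_p+\lambda v_i$ of (\ref{eq:testop}), the Hilbert-transform identity for the convective block, the $\phi/\psi$ trick together with (\ref{eq:uvapp}) for the $v_i$ block, and Corollary \ref{cor:comp} and Propositions \ref{prop:varal}, \ref{prop:linfal}, \ref{prop:asav}, \ref{prop:halfint} for the continuity bound. The genuine problem lies in one of the steps you add to glue these points together, namely your claim that the Hardy part of $\|u\|^2_{\rmH^{1/2}_{00}}$, beyond $\|u\|^2_{\rmL^2}+|u|^2_{\rmH^{1/2}(0,T)}$, is controlled by $\alpha\int|\dot u|^2$ near the endpoints together with $|u|^2_{\rmH^{1/2}}$. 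This inequality is false uniformly in the parameters, even for piecewise affine $u$ vanishing at the extremities: for a logarithmic nodal profile, $u(k\tau)=\min\{1,\log(k+1)/\log(K+1)\}$ near each endpoint with $K\ceq T/\tau$, the Hardy integral is of order $\log K$ while $|u|^2_{\rmH^{1/2}(0,T)}\cleq 1$, $\|u\|^2_{\rmL^2}\ceq 1$ and $\alpha\int|\dot u|^2\cleq 1$ whenever $\alpha\cleq\tau$; this is the same logarithmic gap between $\rmH^{1/2}$ and $\rmH^{1/2}_{00}$ that the boundary-layer remark following the theorem exhibits. The detour is also unnecessary, and this is where your reading differs from the paper: since $u$ is extended by zero and $\calH$ acts on the whole real line, $\int\dot u\,\calH u$ equals the $\rmH^{1/2}(\bbR)$ seminorm squared of the zero-extension, which by the very definition of the Lions--Magenes space is the $\rmH^{1/2}_{00}(0,T)$ seminorm; the missing $\rmL^2$ (and $\alpha^{1/2}\dot u$) contribution comes from the $\lambda v_i$ block, and no Hardy-type upgrade is needed.

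A second, smaller point concerns your bookkeeping of the cross term $\int\gamma u v_p$. Bounding it by $|\log(\alpha)|^{1/2}\|u\|_{\rmL^2}\|u\|_\alpha$ and invoking Young with $\lambda\ceq|\log(\alpha)|^{1/2}$ does not close: the split leaves either a term of order $(|\log(\alpha)|/\lambda)\|u\|^2_\alpha\ceq|\log(\alpha)|^{1/2}\|u\|^2_\alpha$, which the unit-size gain $|u|^2_{\rmH^{1/2}}$ cannot absorb, or it forces $\lambda\cgeq|\log(\alpha)|$, which degrades the continuity constant to $|\log(\alpha)|^{3/2}$. Note that $\int\gamma u\,g$ is log-free since $\|g\|_{\rmL^2}\leq\|\calH u\|_{\rmL^2}\cleq\|u\|_{\rmL^2}$, and the layer attached to $g(0)$ has $\rmL^2$ mass of order $\alpha^{1/2}$; the only delicate piece is the part of $f$ proportional to $g(T)$, which carries the $|\log(\alpha)|^{1/2}$ from the $\rmL^\infty$ bound. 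The paper leaves this term implicit, so you are not contradicting its argument, but as written your accounting does not deliver both displayed inequalities with the stated size of $\lambda$; you should either treat this contribution explicitly or accept a larger $\lambda$ with a correspondingly weaker continuity estimate.
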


\begin{remark}
We point out that, on an interval $I$, the bilinear form $(u,v) \mapsto \int u \dot v$ is continuous on $\rmH^{1/2}_{00}(I) \times \rmH^{1/2}(I) $. Indeed derivation is continuous from $\rmH^1(I)$ to $\rmL^2(I)$ and from $\rmL^2(I)$ to $\rmH^{-1}(I)$. By interpolation it is continuous from $\rmH^{1/2}(I)$ to the dual of $\rmH^{1/2}_{00}(I)$. 
\end{remark}

\begin{remark}
We mention that with this choice of norms one cannot get an $\alpha$-independent inf-sup condition. Indeed we expect the following type of behaviour, for a solution $u_\alpha$, at the outflow boundary, when $t$ is close to $T$:
\begin{equation}
u_\alpha(t) = 1 - \exp(\frac{\beta (t -T)}{\alpha}).
\end{equation}
Then we have:
\begin{equation}
\int \frac{1}{T-t}|u_\alpha(t)|^2 \ceq |\log(\alpha)|.
\end{equation}
This shows that the $\rmH^{1/2}_{00}$-norm of typical boundary layers blows up, as $\alpha$ tends to $0$,  like $|\log(\alpha)|^{1/2}$.
\end{remark}

\begin{remark}
Ideally one should consider continuity of $(u,v) \mapsto \int u \dot v$ on spaces of the form $\rmH^{1/2}_{0\star}(I) \times \rmH^{1/2}_{\star0}(I) $ where the boundary condition is imposed at inflow on the trial functions and at outflow on the test functions. Explicitely these would be the subspaces of $\rmH^{1/2}(I)$ constituted by functions $u$ such that:
\begin{equation}
\int \rho^{-1} |u|^2 < \infty,
\end{equation}
where $\rho$ is the distance to the part of the boundary where one wants to impose a Dirichlet condition. However the Hilbert transform seems less well adapted to this situation.
\end{remark}

\point We now extend the preceding results to multi-dimension, which essentially introduces crosswind diffusion.

From a trial function $u$  we construct two test functions $v_i$ and $v_p$ by the same method as before. Concerning the second one, we notice the following strengthening of (\ref{eq:sfin}):
\begin{equation}
\int \dot u (\alpha \dot v + \beta v) + \int \gamma u v  \cgeq    \alpha \int |\dot v|^2 +  \int |u|^2.
\end{equation}
We supplement (\ref{eq:uvapp}) with the estimate:
\begin{align}
\alpha \int |\partial (\phi u-v)|^2 & \cleq \alpha \tau^2 \int |\partial (\phi u)^\bs |^2,\\
& \cleq \alpha \frac{\tau^2}{\sigma^2} \int |(\phi u)^\bs |^2,\\
& \cleq \frac{\tau^2}{\sigma^2} \Phi(p)^{-1} \alpha \int |\dot v|^2 + \alpha \frac{\tau^2}{\sigma^2} \int |u|^2,
\end{align}
where $p$ is the P\'eclet number and $\Phi$ was defined in Proposition \ref{prop:phi}. Recall that for large $p$, $\Phi(p)$ behaves like $p$. We deduce that for sufficiently large P\'eclet number we have:
\begin{equation}\label{eq:finest}
\int \dot u (\alpha \dot v + \beta v) + \int \gamma u v + \alpha \int \partial u \cdot \partial v \cgeq \int |u|^2 + \alpha \int |\partial u|^2.
\end{equation}

From there one gets:
\begin{theorem} Consider the two norms:
\begin{align}
\| u\|_\alpha & = \|u\|_{\rmH^{1/2}_{00}(0,T) \otimes \rmL^2(V)} + \alpha^{1/2}\|\nabla u\|_{\rmL^2(U)},\\
\| v\|_\alpha' & = \|v\|_{\rmH^{1/2}(0,T) \otimes \rmL^2(V)} + \alpha^{1/2}\|\nabla v\|_{\rmL^2(U)}.
\end{align}
Then, for $\tau$ sufficiently small, independently of $\alpha$, and for P\'eclet numbers bounded below by a sufficiently large number, for any test function $u$, constructing a trial function $v$ by (\ref{eq:testop}) for $\lambda$ sufficiently larger than $|\log(\alpha)|^{1/2}$,  we have:
\begin{equation}
\| v\|_\alpha' \cleq |\log(\alpha)| \| u\|_\alpha,
\end{equation}
and:
\begin{equation}
\int \beta \dot u  v + \int \gamma u v  + \alpha \int \nabla u \cdot \nabla v \cgeq  \| u\|_\alpha^2.
\end{equation}
\end{theorem}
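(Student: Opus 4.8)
The plan is to assemble the pieces already prepared in \S\ref{sec:cont} and in the numbered points above, exactly as in the preceding (one-dimensional) theorem but keeping the crosswind diffusion term throughout. Write $B(u,v)=\int\beta\dot u\,v+\int\gamma uv+\alpha\int\nabla u\cdot\nabla v$ for the bilinear form in the statement; since $\alpha\int\nabla u\cdot\nabla v=\alpha\int\dot u\dot v+\alpha\int\partial u\cdot\partial v$, this is the form of (\ref{eq:varfin}). I would use the test function $v=v_p+\lambda v_i$ of (\ref{eq:testop}), so that $B(u,v)=B(u,v_p)+\lambda B(u,v_i)$, and prove the two assertions by treating the two summands separately. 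Recall $v_p=g-f$, with $g=G_\alpha\ast\overline{\calH u}$ (projection and convolution acting in the time variable) and $f$ the upwinded-in-time corrector matching $g$ at $t=0$ and $t=T$, while $v_i$ is the upwinded interpolant of $\phi u$, $\phi(t)=\exp(-t/\kappa)$.

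For the coercivity estimate the key point is that $v_p$ supplies the $\rmH^{1/2}$-part of $\|u\|_\alpha$ and $\lambda v_i$ the $\rmL^2$- and energy-parts. First, as in the construction of $v_p$, the convection part of $B(u,v_p)$ equals $\beta\int\dot u\,\overline{\calH u}=\beta\int\dot u\,\calH u\cgeq|u|_{\rmH^{1/2}_{00}(0,T)\otimes\rmL^2(V)}^2$, because $\dot u$ is piecewise constant in time and $u$ vanishes at $t=0,T$. The remaining contributions $\int\gamma u\,v_p$ and $\alpha\int\partial u\cdot\partial v_p$ I would treat as perturbations: since $\rmL^2$-projection onto piecewise constants, convolution by $G_\alpha$ (of $\rmL^1$-norm one) and the Hilbert transform are all $\rmL^2$-contractions, and since $\partial$ in $V$ commutes with each of these time-operators, the $g$-part costs only $\|u\|_{\rmL^2}^2$, resp.\ $\alpha\|\partial u\|_{\rmL^2}^2$, while the $f$-part is controlled by $\max\{|g(0)|,|g(T)|\}$ through the bounds established on the corrector (and a crosswind time-trace estimate of the type of Proposition \ref{prop:linfal}), hence by $|\log(\alpha)|^{1/2}\|u\|_\alpha$ via Proposition \ref{prop:linfal}. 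Young's inequality then splits these into a piece $\leq\epsilon\|u\|_\alpha^2$ and a piece $\cleq|\log(\alpha)|\,(\|u\|_{\rmL^2}^2+\alpha\|\nabla u\|_{\rmL^2}^2)$. For the $v_i$-term I would invoke the strengthening of (\ref{eq:sfin}) together with the crosswind estimate (\ref{eq:finest}) of the last point, which give $B(u,v_i)\cgeq\int|u|^2+\alpha\int|\nabla u|^2$. Collecting, and choosing first $\epsilon$ small and then $\lambda$ a sufficiently large multiple of $|\log(\alpha)|^{1/2}$, absorbs both perturbation pieces into the good terms and yields $B(u,v)\cgeq\|u\|_\alpha^2$.

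For the continuity estimate I would use $\|v\|_\alpha'\leq\|v_p\|_\alpha'+\lambda\|v_i\|_\alpha'$ and bound each piece. The bounds already obtained for $g$ (via Corollary \ref{cor:comp} and Proposition \ref{prop:varal}) give $\|g\|_{\rmH^{1/2}(0,T)\otimes\rmL^2(V)}+\alpha^{1/2}\|\dot g\|_{\rmL^2}\cleq|\log(\alpha)|^{1/2}\|u\|_\alpha$, and commuting $\partial$ past $\calH$, the projection and $G_\alpha\ast$ gives $\alpha^{1/2}\|\partial g\|_{\rmL^2}\cleq\alpha^{1/2}\|\partial u\|_{\rmL^2}\leq\|u\|_\alpha$; the corrector $f$ is handled through $\max\{|g(0)|,|g(T)|\}\cleq|\log(\alpha)|^{1/2}\|u\|_\alpha$ and the explicit bounds on $f$, so $\|v_p\|_\alpha'\cleq|\log(\alpha)|^{1/2}\|u\|_\alpha$. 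For $v_i$, Propositions \ref{prop:asav} and \ref{prop:halfint} in their Hilbert-space-valued form, together with the elementary $\rmL^2$-stability of the affine-to-upwinded interpolation that controls the crosswind component, give $\|v_i\|_\alpha'\cleq|\log(\alpha)|^{1/2}\|u\|_\alpha$. Since $\lambda$ is of order $|\log(\alpha)|^{1/2}$, summing gives $\|v\|_\alpha'\cleq|\log(\alpha)|\,\|u\|_\alpha$.

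The hard part will be controlling all the cross-terms at once. Conceptually the argument is just a double application of the ``combine two test functions'' device (Proposition \ref{prop:combine}), but one must arrange that the reaction term $\int\gamma u\,v_p$ and the crosswind term $\alpha\int\partial u\cdot\partial v_p$ are dominated by the $\rmL^2$- and energy-parts produced by $\lambda v_i$, while the crosswind interpolation error $\alpha\|\partial(\phi u-v_i)\|_{\rmL^2}^2$ is dominated by the streamline term $\alpha\|\dot v_i\|_{\rmL^2}^2$; this last domination is available only because $\Phi(p)^{-1}\tau^2/\sigma^2$ is small when the P\'eclet number $p=\beta\tau/\alpha$ is large, and it is exactly this that forces the large-P\'eclet hypothesis (equivalently, crosswind diffusion dominated by streamline diffusion). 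The rest is bookkeeping the logarithmic losses so that they accumulate to no worse than $|\log(\alpha)|$ in the final inf-sup constant.
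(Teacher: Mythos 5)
Your proposal is correct and follows essentially the same route as the paper: the same combined test function $v=v_p+\lambda v_i$ of (\ref{eq:testop}), with the Hilbert-transform/projection/convolution construction and corrector $f$ for $v_p$ controlled by the propositions of \S\ref{sec:cont}, the strengthened estimate (\ref{eq:sfin}) and the crosswind interpolation bound leading to (\ref{eq:finest}) for $v_i$ under the large-P\'eclet hypothesis, and the same choice of $\lambda$ of order $|\log(\alpha)|^{1/2}$. The only addition is that you make explicit the commutation of $\partial$ with the time-direction operators (and the ensuing trace bound for the crosswind part of the corrector), which the paper leaves implicit in its remark that the one-variable propositions extend to Hilbert-space-valued functions.
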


\begin{remark}
There are other circumstances, in addition to large P\'eclet number, that guarantee an estimate of the type (\ref{eq:finest}). For instance $\tau$ sufficiently smaller than $\sigma$. If we allow for anisotropic diffusion, if the crosswind diffusion is sufficiently smaller than the streamline diffusion, this will also be sufficient.
\end{remark}

\section*{Acknowledgement}
We thank Martin Werner Licht and Espen Sande for numerous corrections on the first version.

Kind hospitality of \'Ecole Normale Sup\'erieure, Paris, from October 2015 to February 2016, is gratefully acknowledged. 

This research was supported by the European Research Council through the FP7-IDEAS-ERC Starting Grant scheme, project 278011 STUCCOFIELDS.

\bibliography{../../Bibliography/alexandria,../../Bibliography/newalexandria,../../Bibliography/mybibliography}{}
\bibliographystyle{plain}


\newpage

\appendix 
\section{Alternative stability proof}

\emph{The first version of the paper contained the following alternative proof of stability for the convection diffusion problem. It is more cumbersome than the one provided above. We include it here mainly because we have corrected some typos.} 

\paragraph{Some preliminary results.}

\begin{proposition}\label{prop:linf}
There exists  $C > 0$ such that for all $\tau > 0$ and all $ u$ which are continuous and $\tau$-piecewise affine:
\begin{equation}
 \| u \|_{\rmL^{\infty}} \leq C |\log (\tau)|^{1/2} \| u \|_{\rmH^{1/2}}.
\end{equation}
\end{proposition}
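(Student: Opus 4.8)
The plan is to deduce this from Proposition \ref{prop:linfal}, which already furnishes an $\rmL^\infty$-bound carrying the factor $|\log(\alpha)|^{1/2}$, by specialising the free parameter $\alpha$ there to the mesh-width $\tau$ and then disposing of the resulting extra term $\alpha\int|\dot u|^2$ by the standard inverse inequality for piecewise affine functions.

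Concretely, I would first apply Proposition \ref{prop:linfal} with $\alpha$ replaced by $\tau$ — legitimate since that estimate is valid for every positive value of the parameter — to get
\[
\|u\|_{\rmL^\infty} \cleq |\log(\tau)|^{1/2}\Bigl(\|u\|_{\rmH^{1/2}}^2 + \tau\int|\dot u|^2\Bigr)^{1/2}.
\]
Then I would invoke the inverse inequality $\tau\int|\dot u|^2 \cleq |u|_{\rmH^{1/2}}^2$, valid for $u$ continuous and $\tau$-piecewise affine — the very estimate already used in the proof of Proposition \ref{prop:asav} — to absorb the gradient term into $\|u\|_{\rmH^{1/2}}^2$. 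Combining the two displays gives $\|u\|_{\rmL^\infty} \cleq |\log(\tau)|^{1/2}\|u\|_{\rmH^{1/2}}$, which is the claim (read, for definiteness, with $\tau$ bounded above, say $\tau\leq 1/2$, so that $|\log(\tau)|$ is bounded below).

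Alternatively, and without black-boxing Proposition \ref{prop:linfal}, one can re-run its proof directly with the weight $1+|\xi|+\tau|\xi|^2$: write $\|u\|_{\rmL^\infty}\cleq\|\calF u\|_{\rmL^1}$ and apply Cauchy--Schwarz in the form
\[
\|\calF u\|_{\rmL^1}\leq\Bigl(\int\frac{\rmd\xi}{1+|\xi|+\tau|\xi|^2}\Bigr)^{1/2}\Bigl(\int(1+|\xi|+\tau|\xi|^2)|\calF u(\xi)|^2\,\rmd\xi\Bigr)^{1/2};
\]
the first factor is $\cleq|\log(\tau)|^{1/2}$ by splitting the $\xi$-integral at $|\xi|=1/\tau$ exactly as in Proposition \ref{prop:linfal}, and the second factor equals $(\|u\|_{\rmH^{1/2}}^2+\tau\int|\dot u|^2)^{1/2}$, again controlled by $\|u\|_{\rmH^{1/2}}$ through the inverse inequality.

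I expect the only point requiring a little care — rather than a genuine obstacle — to be that Proposition \ref{prop:linfal} and the inverse inequality are phrased on $\bbR$, while here $u$ is piecewise affine on a bounded interval: when $u$ vanishes at the endpoints one extends by zero, preserving continuity and the piecewise affine structure, and otherwise one combines a bounded $\rmH^{1/2}$-extension with the element-wise nature of the inverse estimate. The conceptual heart of the matter is that the piecewise affine constraint, encoded via the inverse inequality, is precisely what tames the $\tau|\xi|^2$ contribution and converts the otherwise divergent integral $\int\rmd\xi/(1+|\xi|)$ into one that diverges only logarithmically in $\tau$.
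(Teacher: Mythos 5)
Your proof is correct, and it takes a mildly different route from the paper's. The paper's own argument also starts from $\| u \|_{\rmL^\infty} \cleq \| \calF u \|_{\rmL^1}$ and Cauchy--Schwarz, but with the weight $(1+|\xi|^2)^{(1+s)/2}$, which gives $\| u \|_{\rmL^\infty} \cleq s^{-1/2} \| u \|_{\rmH^{(1+s)/2}}$; it then uses the inverse inequality $\| u \|_{\rmH^{(1+s)/2}} \cleq \tau^{-s/2} \| u \|_{\rmH^{1/2}}$ for $\tau$-piecewise affines and optimizes $s = 1/|\log(\tau)|$. You instead specialize Proposition \ref{prop:linfal} to $\alpha := \tau$ (equivalently, run the same Fourier argument with the weight $1+|\xi|+\tau|\xi|^2$) and absorb the extra term via the endpoint inverse inequality $\tau \int |\dot u|^2 \cleq |u|_{\rmH^{1/2}}^2$, which is exactly the one the paper invokes in Proposition \ref{prop:asav}; note that this estimate is elementwise (on each $\tau$-interval, $\tau \int |\dot u|^2$ is the squared difference of the nodal values, controlled by the local Slobodetski seminorm by scale invariance), so it also disposes of the bounded-interval point you raise without any global extension argument. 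What your version buys is that there is no auxiliary exponent $s$ to optimize and no fractional-scale inverse estimate whose constant must be checked uniform in $s$; what the paper's version buys is independence from Proposition \ref{prop:linfal}, i.e.\ it is self-contained within the appendix. Both proofs, yours and the paper's, implicitly require $\tau$ bounded away from $1$ (say $\tau \leq 1/2$, or reading $|\log(\tau)|$ as $1+|\log(\tau)|$), since the stated inequality is vacuous at $\tau=1$; flagging this, as you do, is appropriate and is a harmless imprecision of the statement itself rather than a gap in your argument.
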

\begin{proof}
We have, for small $s >0$:
\begin{align}
\|u\|_{\rmL^\infty} & \cleq \| \calF u \|_{\rmL^1},\\
&\cleq \int (1 + |\xi|^2)^{- \frac{1 + s}{4}} (1 + |\xi|^2)^{\frac{1 + s}{4}} |\calF u (\xi)| \rmd \xi,\\
& \cleq \left(\int (1 + |\xi|^2)^{- \frac{1 + s}{2}} \rmd \xi\right)^{1/2}\left (\int (1 + |\xi|^2)^{\frac{1 + s}{2}} |\calF u(\xi)|^2 \rmd \xi\right)^{1/2} ,\\
& \cleq \frac{1}{s^{1/2}} \|u \|_{\rmH^{(1+s)/2}}.
\end{align} 
Then we use an inverse inequality to obtain:
\begin{align}
\|u\|_{\rmL^\infty} & \cleq \frac{\tau^{-s/2} }{s^{1/2}}   \|u \|_{\rmH^{1/2}}.
\end{align}
Finally  we let:
\begin{equation}
s = 1/ |\log(\tau)|.
\end{equation}
This gives the estimate.
\end{proof}

\begin{proposition}\label{prop:linfh}
On any given interval, there exists a $C > 0$ such that for all $\tau > 0$ and all functions $ u$ which are continuous, $\tau$-piecewise affine and $0$ at the extremities:
\begin{equation}
 \| \calH u \|_{\rmL^{\infty}} \leq C |\log (\tau)| \| u \|_{\rmL^\infty}.
\end{equation}
\end{proposition}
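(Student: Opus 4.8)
The plan is to work directly with the convolution kernel of $\calH$, since the point of the estimate is a loss coming from the mesh scale $\tau$ and from the endpoints of the fixed interval, which the Fourier side does not see cleanly. Write $[a,b]$ for the given interval, extend $u$ by zero to $\bbR$, and set $M=\|u\|_{\rmL^\infty}$. Two elementary properties of the piecewise affine $u$ do all the work: it is globally Lipschitz with constant $\cleq M/\tau$ (each slope is a difference of two values bounded by $M$, divided by $\tau$), and, since it vanishes at $a$ and $b$, it satisfies $|u(x)|\cleq (M/\tau)\,\dist(x,\{a,b\})$ near either endpoint.

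The organizing identity is, for $x\in(a,b)$,
\begin{equation}
\pi\,\calH u(x) \;=\; \int_a^b \frac{u(y)-u(x)}{x-y}\,\rmd y \;+\; u(x)\,\log\Bigl|\frac{x-a}{x-b}\Bigr| ,
\end{equation}
obtained by adding and subtracting $u(x)$ under the principal value and using $\PV\int_a^b(x-y)^{-1}\rmd y=\log|(x-a)/(x-b)|$; the first integrand is bounded by the Lipschitz constant, so no principal value is needed there (equivalently, one may integrate by parts using $u(a)=u(b)=0$ to write $\pi\,\calH u(x)=\int_a^b\dot u(y)\log|x-y|\,\rmd y$, which leads to the same estimates). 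I would bound the first term by interpolating the two pointwise bounds $|u(y)-u(x)|\cleq M$ and $|u(y)-u(x)|\cleq (M/\tau)|x-y|$, i.e. $|u(y)-u(x)|/|x-y|\cleq \min\{M/\tau,\ M/|x-y|\}$, and then integrate over $y\in[a,b]$, splitting at $|x-y|=\tau$; this gives a bound $\cleq M+M\log((b-a)/\tau)\cleq M\,|\log\tau|$. For the second term I would estimate $|u(x)|\,(|\log|x-a||+|\log|x-b||)$: when both endpoints are at distance $\geq\tau$ from $x$ it is $\cleq M\,|\log\tau|$ since $|u(x)|\leq M$; if, say, $x$ lies within $\tau$ of $a$ one uses instead $|u(x)|\cleq (M/\tau)(x-a)$, so that $|u(x)|\,|\log(x-a)|\cleq M\,|\log\tau|+(M/\tau)\sup_{0<\rho\leq\tau}\rho\,|\log(\rho/\tau)|\cleq M\,|\log\tau|$, the factor involving $|x-b|$ being harmless because $|x-b|$ is comparable to the fixed length $b-a$. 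This yields $|\calH u(x)|\cleq M\,|\log\tau|$ for all $x$ inside the interval.

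It remains to treat $x\notin[a,b]$, say $x\leq a$ (the case $x\geq b$ is symmetric), where $\pi\,\calH u(x)=\int_a^b u(y)(x-y)^{-1}\rmd y$ has no singularity. On $[a,a+\tau]$ I would use $|u(y)|\cleq (M/\tau)(y-a)\leq (M/\tau)(y-x)$ for a contribution $\cleq M$, and on $[a+\tau,b]$ the bound $y-x\geq y-a\geq\tau$ for a contribution $\cleq M\log((b-a)/\tau)\cleq M\,|\log\tau|$. Combined with the interior estimate this gives $\|\calH u\|_{\rmL^\infty(\bbR)}\cleq|\log\tau|\,\|u\|_{\rmL^\infty}$, as claimed.

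The one genuinely delicate point — and the reason a naive argument fails — is that $\calH$ does not map $\rmL^\infty$ into $\rmL^\infty$: a crude estimate, bounding $\dot u$ pointwise by $M/\tau$ or summing $\|\calH\phi\|_{\rmL^\infty}$ over the $\cleq 1/\tau$ nodal hat functions $\phi$, only produces $O(1/\tau)$, so some cancellation must be exploited. The decomposition above is exactly what packages that cancellation: below scale $\tau$ one uses the Lipschitz regularity of $u$, above scale $\tau$ merely its boundedness, and the potentially log-divergent diagonal term $u(x)\log|(x-a)/(x-b)|$ is tamed precisely because $u$ vanishes linearly, with slope $\cleq M/\tau$, at the endpoints — the relevant extremal quantity being $\sup_{0<\rho\leq\tau}(\rho/\tau)\,|\log\rho|\cleq|\log\tau|$, which is also where the single logarithm in the statement comes from.
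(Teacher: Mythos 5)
Your argument is correct, but it is not the route the paper takes. You work directly with the kernel: adding and subtracting $u(x)$, splitting at the scale $|x-y|=\tau$ (the Lipschitz bound $\|u\|_{\rmL^\infty}/\tau$ below that scale, mere boundedness above it), and taming the endpoint term $u(x)\log|(x-a)/(x-b)|$ through the linear vanishing of $u$ at the extremities, with a separate elementary estimate for $x$ outside $[a,b]$. The paper instead goes through $\rmL^p$: it applies the one-dimensional interpolation inequality $\|v\|_{\rmL^\infty}\cleq \|v\|_{\rmL^p}^{1-1/p}\|\dot v\|_{\rmL^p}^{1/p}$ to $v=\calH u$ (using that $\calH$ commutes with differentiation), invokes the $\bigo(p)$ bound for the Hilbert transform on $\rmL^p$, uses the inverse estimate $\|\dot u\|_{\rmL^\infty}\cleq \tau^{-1}\|u\|_{\rmL^\infty}$, and then optimizes $p=|\log\tau|$ so that $\tau^{-1/p}$ stays bounded — exactly the same trick as in Proposition \ref{prop:linf}. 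Each approach buys something: the paper's proof is three lines, recycles the $p\sim|\log\tau|$ device already used for Proposition \ref{prop:linf}, and extends verbatim to Hilbert-space-valued functions, but it imports the nontrivial fact that the norm of $\calH$ on $\rmL^p$ grows linearly in $p$; your proof is self-contained and elementary, makes explicit where the hypothesis $u(a)=u(b)=0$ enters (it kills the endpoint logarithm and handles exterior points, so you in fact bound $\calH u$ on all of $\bbR$, not just on the interval), and correctly diagnoses why a naive estimate only gives $\bigo(1/\tau)$. One shared caveat: your constants depend on the interval length through $\log((b-a)/\tau)\cleq|\log\tau|$, which presumes $\tau$ small relative to $b-a$; the paper's proof (and indeed the statement itself, whose right-hand side degenerates as $\tau\to 1$) carries the same implicit restriction, so this is not a gap on your side.
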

\begin{proof} We use that for large $p$, the Hilbert transform is continuous from $\rmL^p$ to $\rmL^p$, with norm of order $p$.
We write, for large $p < \infty$:
\begin{align}
\| \calH u \|_{\rmL^{\infty}} & \cleq \| \calH u \|_{\rmL^{p}}^{1 - 1/p} \| \calH \dot u \|_{\rmL^{p}}^{1/p},\\
& \cleq p \| u  \|_{\rmL^{p}}^{1 - 1/p} \| \dot u \|_{\rmL^{p}}^{1/p},\\
& \cleq p \tau^{-1/p} \| u \|_{\rmL^{\infty}}.
\end{align}
Then we choose:
\begin{equation}
p = |\log (\tau)|.
\end{equation}
This concludes the proof.
\end{proof}
Combining the two previous propositions we get:
\begin{corollary}
On any given interval, there exists a $C > 0$ such that for all $\tau > 0$ and all functions $ u$ which are continuous, $\tau$-piecewise affine and $0$ at the extremities:
\begin{equation}
 \| \calH u \|_{\rmL^{\infty}} \leq C |\log (\tau)|^{3/2} \| u \|_{\rmH^{1/2}}.
\end{equation}
\end{corollary}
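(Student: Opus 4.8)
The proof is a direct composition of the two preceding propositions, so the plan is short. First I would apply Proposition~\ref{prop:linfh}: since $u$ is continuous, $\tau$-piecewise affine, and vanishes at the endpoints of the interval, that proposition gives
\begin{equation}
\| \calH u \|_{\rmL^\infty} \cleq |\log(\tau)| \, \| u \|_{\rmL^\infty}.
\end{equation}
Then I would control $\| u \|_{\rmL^\infty}$ by Proposition~\ref{prop:linf}, which applies to the same class of functions (continuous, $\tau$-piecewise affine) and yields
\begin{equation}
\| u \|_{\rmL^\infty} \cleq |\log(\tau)|^{1/2} \, \| u \|_{\rmH^{1/2}}.
\end{equation}
Chaining the two estimates and collecting the logarithmic factors gives $\| \calH u \|_{\rmL^\infty} \cleq |\log(\tau)|^{1 + 1/2} \| u \|_{\rmH^{1/2}}$, which is exactly the assertion after renaming the constant.

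The only bookkeeping point worth a remark is the range of $\tau$. The stated exponents of $|\log(\tau)|$ in both auxiliary propositions are the interesting content in the regime of small $\tau$, where $|\log(\tau)|$ dominates the constants that get absorbed along the way; for $\tau$ bounded away from $0$ on the fixed interval the corollary is immediate from the $\rmH^{1/2}$-boundedness of $\calH$, the Sobolev embedding $\rmH^{(1+s)/2}\hookrightarrow\rmL^\infty$ for a fixed small $s>0$, and an inverse inequality — precisely the ingredients already assembled in the proofs of Propositions~\ref{prop:linf} and~\ref{prop:linfh}. There is no genuine obstacle in the corollary itself: all the analytic work (the $\rmL^p\!\to\!\rmL^p$ growth of the Hilbert transform, the logarithmic Sobolev-type embedding, the inverse inequality, and the optimizations in $p$ and $s$) has been carried out in the two cited results, and this statement merely records their product.
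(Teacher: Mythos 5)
Your proposal is correct and is exactly the paper's argument: the corollary is obtained by chaining Proposition~\ref{prop:linfh} (giving the factor $|\log(\tau)|$ against $\|u\|_{\rmL^\infty}$) with Proposition~\ref{prop:linf} (giving $|\log(\tau)|^{1/2}$ against $\|u\|_{\rmH^{1/2}}$), which is precisely how the paper states it ("Combining the two previous propositions"). Your remark about the regime of $\tau$ bounded away from zero is harmless extra bookkeeping and does not change the argument.
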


The following is another error estimate for an operator producing upwinded functions.
\begin{proposition}\label{prop:err}
For each $\epsilon >0$ there exists $C > 0$ such that for all $\alpha \leq \tau /C$ and all $u$ which are $\tau$-piecewise constant, we have
\begin{equation}
\| u - G_\alpha \ast u \|_{\rmL^2} \leq \epsilon \| u\|_{\rmL^2}.
\end{equation}
\end{proposition}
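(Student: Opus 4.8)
The plan is to mimic the Fourier-analytic estimates used for Propositions \ref{prop:diffnorm} and \ref{prop:varal}, combined with the observation that a $\tau$-piecewise constant function, while not in $\rmH^{1/2}(\bbR)$, does lie in every $\rmH^s(\bbR)$ with $s<1/2$, with a seminorm that scales like $\tau^{-s}$ times its $\rmL^2$-norm.

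First I would pass to the Fourier side. As in the proof of Proposition \ref{prop:diffnorm} we may take $\beta=1$, so $\calF G_\alpha(\xi)=(1+\alpha i\xi)^{-1}$, and hence
\[
\|u-G_\alpha\ast u\|_{\rmL^2}^2=\int_\bbR\left|1-\frac{1}{1+\alpha i\xi}\right|^2|\calF u(\xi)|^2\,\rmd\xi=\int_\bbR\frac{\alpha^2\xi^2}{1+\alpha^2\xi^2}\,|\calF u(\xi)|^2\,\rmd\xi.
\]
Next I would bound the multiplier. Fix once and for all $s=1/4$ (any $s\in\,]0,1/2[$ works). Since $x^2/(1+x^2)\le x^{2s}$ for all $x\ge 0$ when $0\le s\le 1$, taking $x=\alpha|\xi|$ gives
\[
\|u-G_\alpha\ast u\|_{\rmL^2}^2\le \alpha^{2s}\int_\bbR|\xi|^{2s}|\calF u(\xi)|^2\,\rmd\xi\cleq \alpha^{2s}\,|u|_{\rmH^{s}(\bbR)}^2,
\]
where the last step uses the equivalence of the Fourier and Slobodetski seminorms on $\bbR$, which holds uniformly for this fixed $s$.

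The heart of the matter is to estimate $|u|_{\rmH^{s}(\bbR)}$ for $u$ that is $\tau$-piecewise constant. Writing $u=\sum_k a_k\mathbf{1}_{[k\tau,(k+1)\tau[}$, an elementary computation shows that for $0<|y|\le\tau$ the function $u-\tau_y u$ is supported in strips of width $|y|$ around the nodes $k\tau$, where it equals $a_k-a_{k-1}$; hence $\|u-\tau_y u\|_{\rmL^2}^2\le |y|\sum_k|a_k-a_{k-1}|^2\le 4|y|\sum_k|a_k|^2=4(|y|/\tau)\|u\|_{\rmL^2}^2$, while for $|y|\ge\tau$ one just uses $\|u-\tau_y u\|_{\rmL^2}^2\le 4\|u\|_{\rmL^2}^2$. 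Inserting the resulting bound $\|u-\tau_y u\|_{\rmL^2}^2\cleq\min\{|y|/\tau,1\}\,\|u\|_{\rmL^2}^2$ into the Slobodetski seminorm and integrating in $y$ gives
\[
|u|_{\rmH^{s}(\bbR)}^2\cleq \|u\|_{\rmL^2}^2\int_\bbR\frac{\min\{|y|/\tau,1\}}{|y|^{1+2s}}\,\rmd y=\Big(\tfrac{1}{1-2s}+\tfrac{1}{2s}\Big)\,\tau^{-2s}\,\|u\|_{\rmL^2}^2,
\]
finite since $s$ is fixed in $]0,1/2[$. Combining the three displays yields $\|u-G_\alpha\ast u\|_{\rmL^2}\le C_0\,(\alpha/\tau)^{s}\,\|u\|_{\rmL^2}$ with an absolute constant $C_0$, and choosing $C=(C_0/\epsilon)^{1/s}$ finishes the proof.

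I expect the only real obstacle to be the third step: making rigorous the intuition that translating a piecewise-constant function by $y$ perturbs it in $\rmL^2$ by $\bigo(\sqrt{|y|/\tau})$, which forces one to control $\sum_k|a_k-a_{k-1}|^2$ by $\|u\|_{\rmL^2}^2/\tau$; this is where the $\tau$-grid structure is genuinely used, and it also explains why one must keep $s$ bounded away from $1/2$ (the constant above blows up like $(1/2-s)^{-1}$), rather than letting $s\to 1/2$ as in the logarithmically sharp estimates elsewhere in this section. A sharper constant $C=\bigo(\epsilon^{-2})$ could alternatively be extracted by computing $\int_\bbR\frac{\alpha^2\xi^2}{1+\alpha^2\xi^2}|\calF u|^2$ exactly, using the $\tfrac{2\pi}{\tau}$-periodicity of $\xi\mapsto\sum_k a_k e^{-ik\tau\xi}$ together with the identity $\sum_{m\in\bbZ}\big((\tfrac{2\pi m}{\tau}+\theta)^2+c^2\big)^{-1}=\tfrac{\tau}{2c}\,\sinh(c\tau)/(\cosh(c\tau)-\cos(\tau\theta))$, but the argument above is shorter.
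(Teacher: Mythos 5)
Your argument is correct, and it takes a genuinely different route from the paper's. The paper works entirely in real space: it normalizes $\tau=1$, splits the kernel as $G_\alpha=G_\alpha^\delta+(G_\alpha-G_\alpha^\delta)$ with $G_\alpha^\delta$ the truncation to $[0,\delta]$, checks the estimate on a reference cell (where $G_\alpha^\delta\ast u$ only sees the two neighbouring constants and $G_\alpha^\delta$ has mass $1-\exp(-\beta\delta/\alpha)$, close to $1$ for small $\alpha$), sums these local contributions over the cells, controls the tail by its $\rmL^1$-norm $\exp(-\beta\delta/\alpha)\leq\epsilon/4$, and recovers general $\tau$ by scaling; the threshold $\alpha\leq\tau/C$ appears only qualitatively ("for small enough $\alpha$"). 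You instead combine the Fourier multiplier bound $\alpha^2\xi^2/(1+\alpha^2\xi^2)\leq(\alpha|\xi|)^{2s}$ with the fact that $\tau$-piecewise constants lie in $\rmH^s(\bbR)$ for fixed $s<1/2$ with $|u|_{\rmH^s}\cleq\tau^{-s}\|u\|_{\rmL^2}$; your translation estimate $\|u-\tau_y u\|_{\rmL^2}^2\cleq\min\{|y|/\tau,1\}\|u\|_{\rmL^2}^2$ is the crux and parallels the proof of Proposition \ref{prop:bar}, while the multiplier step is in the spirit of Propositions \ref{prop:wh} and \ref{prop:diffnorm}. What your route buys is a quantitative rate $\|u-G_\alpha\ast u\|_{\rmL^2}\cleq(\alpha/\tau)^s\|u\|_{\rmL^2}$ and hence an explicit $C$ of order $\epsilon^{-1/s}$; the price is that the constant degenerates as $s\to1/2$, so with $s=1/4$ you get the suboptimal power $(\alpha/\tau)^{1/4}$, whereas the sharp behaviour is of order $(\alpha/\tau)^{1/2}$ (recoverable from your proposed periodization computation, or from the paper's local argument made quantitative). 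The paper's kernel-splitting is more elementary and avoids Fourier analysis, but yields no explicit dependence of $C$ on $\epsilon$. Two harmless details in your write-up: the integral over $\bbR$ in the Slobodetski bound carries an extra factor $2$ (absorbed by $\cleq$), and the reduction to $\beta=1$ is legitimate since $\beta$ is a fixed constant, so it only enters the final constant.
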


\begin{proof}
We first fix $\tau= 1$.

\noindent For $\delta > 0$ we decompose $G_\alpha$ as follows:
\begin{equation}
G_\alpha = G_\alpha^\delta + (G_\alpha - G_\alpha^\delta),
\end{equation}
with:
\begin{equation}
G_\alpha^\delta(s) = \left \{ \begin{array}{ll}
\frac{\beta}{\alpha} \exp( \frac{- \beta s}{\alpha}) &\textrm{ for } 0 \leq s \leq \delta,\\
0 &\textrm{ for } s < 0 \textrm{ or } s > \delta.
\end{array} \right.
\end{equation}
We have:
\begin{equation}
\int_0^{\delta}G_\alpha^\delta  \rmd s = 1 - \exp (\frac{-\beta \delta}{\alpha}).
\end{equation}
Fix $\delta \in ]0, 1[$. As $\alpha$ tends to $ 0$ the above number tends to $1$.

Choose $\epsilon > 0$. For small enough $\alpha$ we have, for any $u$ that is constant on $]-1, 0[$ and $]0, 1[$: 
\begin{equation}
\| u - G_\alpha^\delta \ast u \|_{\rmL^2(0, 1)}  \leq \frac{\epsilon}{4} \| u \|_{\rmL^2(-1, 1)}.
\end{equation}
Therefore, for $u\in \rmL^2(\bbR)$ which is constant on each interval $]k, k+1[$ for $k \in \bbZ$:
\begin{align}
\| u - G_\alpha^\delta \ast u \|_{\rmL^2}^2 & \leq  \frac{\epsilon^2}{16} \sum_{k \in \bbZ} \| u \|_{\rmL^2(k-1, k+1)},\\
& \leq \frac{ \epsilon^2 }{8} \|u\|^2_{\rmL^2}.
\end{align}
For small enough $\alpha$ we also have:
\begin{equation}
\| G_\alpha - G^\delta_\alpha \|_{\rmL^1} = \exp (\frac{-\beta \delta}{\alpha}) \leq \frac{\epsilon}{4}.
\end{equation}
Therefore:
 \begin{equation}
\| u - G_\alpha \ast u \|_{\rmL^2} \leq \epsilon \, \frac{ 2^{1/2} + 1}{4} \|u \|_{\rmL^2}. 
\end{equation}
This gives the result for $\tau = 1$. One concludes by scaling.
\end{proof}

\paragraph{Problem setup.} For a function $u$ defined on a domain $U=]0,T[ \times V$, derivation along the first axis will be denoted $u \mapsto \dot u$, and derivation along the remaining axes (in $V$) will be denoted $u \mapsto  \partial_V u$.

In this section we take $\gamma= 1$. The variational form of equation (\ref{eq:convdiff}) can be written:
\begin{equation}\label{eq:cdvar}
\int \langle \dot u, \alpha \dot v + \beta v \rangle + \int a^\alpha (u, v) = \int \langle f, v \rangle.
\end{equation}
Here, integration is on $]0, T[$, and for functions on $V$ we denote:
\begin{equation}
\langle u, v \rangle = \int_V u v.
\end{equation}
Moreover $a^\alpha$ denotes the bilinear map defined on functions on $V$ by:
\begin{equation}
a^\alpha(u,v) = \int_V u v + \alpha \int_V \partial_V u \cdot \partial_V v.
\end{equation}

We let $X_\sigma$ denote some standard finite element space of functions on $V$, such as $Q_{1...1}$ with respect to a product mesh of width $\sigma$, or $P_1$ with respect to a simplicial mesh.

Let $Z^0_\tau$ denote the space of $\tau$-piecewise constant functions on $[0,T]$. Also let $Z^1_\tau$ denote the space of continuous $\tau$-piecewise affine ones, which are $0$ at the extremities of the interval. Finally let $Z^1_\tau(\alpha)$ denote the space of continuous functions which are $\tau$-piecewise of the upwinded form (\ref{eq:cpe}), which are also $0$ at the extremities.

As already indicated in \S \ref{sec:setup} we solve (\ref{eq:cdvar}) by a Petrov Galerkin method, where the trial space is $Z^1_\tau \otimes X_\sigma$ and the test space is $Z^1_\tau(\alpha) \otimes X_\sigma$.
  
\paragraph{Decomposition in steps and terms.}

Given $u\in Z^1_\tau \otimes X_\sigma$, we construct a quasi-optimal test function for (\ref{eq:cdvar}) in $Z^1_\tau(\alpha) \otimes X_\sigma$ in several steps:
\begin{itemize}
\item $v_0 = \calH u + \lambda u$, with $\lambda \geq 1$ to be determined,
\item $v_1 = \overline v_0 \in Z^0_\tau \otimes X_\sigma$ is the projection of $v_0$ onto $\tau$-piecewise constants,
\item $v_2$ solves $\alpha \dot v_2 + \beta v_2 = \beta v_1$, with $v_2(0)=0$,
\item $v_3$ is defined by putting:
\begin{align}
v_3 & = v_2 \textrm{ on } [0, T - \tau],\\
\alpha \ddot v_3 + \beta \dot v_3 & = 0 \textrm{ on } ]T - \tau, T], \textrm{ with } v_3(T)=0 .
\end{align}
\end{itemize} 
This last $v_3$  is in $Z^1_\tau(\alpha) \otimes X_\sigma$ and will be our candidate for an optimal test function. Our first task is to show how relevant norms of $v_3$ can be controlled. We then write:
\begin{equation}
\int \langle \dot u, \alpha \dot v_3 + \beta v_3 \rangle + \int a^\alpha (u, v_3) = I_1 + I_2 + I_3 + I_4,
\end{equation}
with:
\begin{align}
I_1 & = \int_{[0, T - \tau]} \langle \dot u, \beta v_1 \rangle,\\
I_2 & = \int_{[T - \tau, T]} \langle \dot u, \alpha \dot v_3 + \beta v_3 \rangle,\\
I_3 & = \int_{[0, T - \tau]} a^\alpha(u,v_2),\\
I_4 & = \int_{[T - \tau, T]} a^\alpha(u,v_3).
\end{align} 
We estimate the four terms successively. The overall plan is to show that $I_1$ and $I_3$ are big and together dominate the norm squared of $u$, whereas the terms $I_2$ and $I_4$ will be shown not to deteriorate this estimate.

We have four parameters: $\alpha, \tau, \sigma$ and $\lambda$. All our constants are independent of these parameters. Notice also that we will let $\lambda$ vary, contrary to the theory provided for parabolic problems, where it was just chosen big enough.

\paragraph{Controlling $v_3$.} We first remark that $v_3$ is not too big. More precisely we have the following estimates. By Corollary  \ref{cor:comp} we have:
\begin{equation}
\| v_2 \|_{\rmH^{1/2}(O)} \cleq |\log(\alpha)|^{1/2}\lambda  \| u \|_{\rmH^{1/2}(O)}.
\end{equation}
We also have:
\begin{equation}
\| v_2 \|_{\rmL^2(X^\alpha)} \cleq \lambda  \| u \|_{\rmL^2(X^\alpha)}.
\end{equation}

To estimate $v_3$ we use the explicit formula, for $t \in [T-\tau, T]$:
\begin{equation}\label{eq:expl}
v_3(t) = \frac{\exp(\frac{\beta(T - t)}{\alpha}) -1}{\exp(\frac{\beta \tau}{\alpha}) -1} v_2 (T- \tau).
\end{equation}
We can deduce the following formula on $[T- \tau, T]$:
\begin{equation}\label{eq:abv3}
\alpha \dot v_3 + \beta v_3 = \frac{-\beta}{\exp(\frac{\beta \tau}{\alpha}) -1} v_2(T- \tau).
\end{equation}
We remark that for the characteristic function of $[T-\tau, T]$ we have:
\begin{align}
\| \chi_{[T-\tau, T]} \|_{\rmH^{1/2}_w(O)}^2  & = \| \chi_{[T-\tau, T]} \|_{\rmL^2(O)}^2 + | \chi_{[T-\tau, T]} |_{\rmH^{1/2}_w(O)}^2,\\
& \leq \tau + 1.
\end{align}
Using the notation of the proof of Proposition \ref{prop:diffnorm} we introduce $v = v_2 - v_3$ and write:
\begin{align}
v & = G_\alpha \ast\frac{1}{\beta}( \alpha \dot v + \beta v),\\
 & = G_\alpha \ast (v_1 +  \frac{1}{ \exp(\frac{\beta \tau}{\alpha}) - 1 }v_2 (T- \tau)) \chi_{[X-\tau, X]}.
\end{align}
We deduce, using Propositions \ref{prop:wh} and \ref{prop:diffnorm} that, for small $\epsilon$:
\begin{equation}
\|v\|_{\rmH^{1/2}(O)} \cleq \frac{1}{\epsilon^{1/2} \alpha^{\epsilon}} ( \| v_1\|_{\rmL^{\infty}(O)} + \| v_2 \|_{\rmL^{\infty}(O)}).
\end{equation}
We let $\epsilon =  1 / |\log (\alpha)|$ and combine with Propositions \ref{prop:linf} and \ref{prop:linfh} to deduce:
\begin{equation}
\|v\|_{\rmH^{1/2}(O)} \cleq  |\log (\alpha)|^{1/2} |\log(\tau)|^{1/2} ( |\log(\tau)| + \lambda) \| u\|_{\rmH^{1/2}(O)}.
\end{equation}

We conclude:
\begin{proposition} We have the following estimates:

\begin{equation}
\|v_3\|_{\rmH^{1/2}(O)} \cleq |\log (\alpha)|^{1/2} |\log(\tau)|^{1/2} ( |\log(\tau)| + \lambda) \| u\|_{\rmH^{1/2}(O)},
\end{equation}
and:
\begin{equation}
\| v_3 \|_{\rmL^2(X^\alpha)} \cleq \lambda  \| u \|_{\rmL^2(X^\alpha)}.
\end{equation}
\end{proposition}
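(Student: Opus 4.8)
The statement is a consolidation of the bounds derived in the three preceding paragraphs, so the plan is mostly bookkeeping, with one genuine argument needed for the $\rmL^2(X^\alpha)$ bound. For the $\rmH^{1/2}(O)$ estimate I would decompose $v_3 = v_2 - v$, where $v = v_2 - v_3$ is the function examined just above (supported on $[T-\tau,T]$), apply the triangle inequality, and invoke the two bounds already in hand: $\|v_2\|_{\rmH^{1/2}(O)} \cleq |\log(\alpha)|^{1/2}\lambda\|u\|_{\rmH^{1/2}(O)}$ from Corollary \ref{cor:comp}, and $\|v\|_{\rmH^{1/2}(O)} \cleq |\log(\alpha)|^{1/2}|\log(\tau)|^{1/2}(|\log(\tau)|+\lambda)\|u\|_{\rmH^{1/2}(O)}$ from combining Propositions \ref{prop:wh}, \ref{prop:diffnorm}, \ref{prop:linf} and \ref{prop:linfh} with $\epsilon = 1/|\log(\alpha)|$. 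For $\tau$ below a fixed threshold one has $|\log(\tau)|^{1/2}\geq 1$, so $\lambda \leq |\log(\tau)|^{1/2}(|\log(\tau)|+\lambda)$ and the first bound is dominated by the second; this yields the first claimed estimate.

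For the $\rmL^2(X^\alpha)$ estimate I would split the time interval at $T-\tau$. On $[0,T-\tau]$ we have $v_3 = v_2$, contributing at most $\|v_2\|_{\rmL^2(X^\alpha)}\cleq \lambda\|u\|_{\rmL^2(X^\alpha)}$, a bound already recorded. On $[T-\tau,T]$ the explicit formula (\ref{eq:expl}) writes $v_3(t) = \theta(t)\,v_2(T-\tau)$ with $0\leq\theta(t)\leq 1$, so this part is at most $\tau^{1/2}\|v_2(T-\tau)\|_{X^\alpha}$ in $\rmL^2(X^\alpha)$-norm. Everything then hinges on the pointwise-in-time bound $\tau\|v_2(T-\tau)\|_{X^\alpha}^2 \cleq \lambda^2\|u\|_{\rmL^2(X^\alpha)}^2$.

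This last bound is the step I expect to be the main obstacle: a direct Cauchy--Schwarz on $v_2 = G_\alpha \ast v_1$ fails, since $\|G_\alpha\|_{\rmL^2}^2 = \beta/(2\alpha)$ blows up as $\alpha\to 0$. Instead I would exploit the upwinded structure. Since $v_1$ is $\tau$-piecewise constant and $v_2$ solves the scalar ODE $\alpha\dot v_2 + \beta v_2 = \beta v_1$ with $v_2(0)=0$, integrating over successive $\tau$-intervals exhibits the value of $v_2$ at each node as a convex combination — with nonnegative weights, each strictly below one and summing to at most one, independent of $X^\alpha$ — of the values of $v_1$ on the earlier intervals (the maximum principle for upwinded functions, in its Hilbert-space-valued form). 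Applying convexity of $\|\cdot\|_{X^\alpha}^2$, multiplying by $\tau$, using that each weight is at most one, and recognising $\tau\|c_k\|_{X^\alpha}^2$ as the $\rmL^2(X^\alpha)$-mass of $v_1$ over the $k$-th interval, one gets $\tau\|v_2(T-\tau)\|_{X^\alpha}^2 \leq \|v_1\|_{\rmL^2([0,T-\tau],X^\alpha)}^2$. Finally $v_1 = \overline{v_0}$ and $\rmL^2$-projection onto piecewise constants is a contraction on $\rmL^2(\bbR,X^\alpha)$, while $v_0 = \calH u + \lambda u$ with the Hilbert transform bounded on $\rmL^2(\bbR,X^\alpha)$ and $\lambda\geq 1$; hence $\|v_1\|_{\rmL^2(X^\alpha)} \leq \|v_0\|_{\rmL^2(X^\alpha)} \cleq \lambda\|u\|_{\rmL^2(X^\alpha)}$. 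Combining the contributions from the two time-intervals gives the second estimate.
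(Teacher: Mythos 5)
Your argument is correct and, for the $\rmH^{1/2}(O)$ estimate, it is exactly the paper's route: the paper bounds $\|v_2\|_{\rmH^{1/2}(O)}$ by Corollary \ref{cor:comp}, bounds $v=v_2-v_3$ by writing it as $G_\alpha$ convolved with a piecewise constant function supported on $[T-\tau,T]$ and chaining Propositions \ref{prop:wh}, \ref{prop:diffnorm}, \ref{prop:linf}, \ref{prop:linfh} with $\epsilon=1/|\log\alpha|$, and concludes by the triangle inequality, just as you do. The only place you genuinely go beyond the paper is the $\rmL^2(X^\alpha)$ bound: the paper records $\|v_2\|_{\rmL^2(X^\alpha)}\cleq\lambda\|u\|_{\rmL^2(X^\alpha)}$ and then simply asserts the same for $v_3$, leaving the contribution of the final interval implicit, whereas you supply it via the formula (\ref{eq:expl}) with $0\leq\theta\leq 1$ and a convex-combination (maximum principle) argument for $\tau\|v_2(T-\tau)\|_{X^\alpha}^2$. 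Your argument is sound (the weights are nonnegative with sum at most one, and vector-valued convexity poses no difficulty); a marginally shorter route to the same key inequality is $\|v_2(T-\tau)\|_{X^\alpha}\leq\|v_1\|_{\rmL^\infty(X^\alpha)}\leq\tau^{-1/2}\|v_1\|_{\rmL^2(X^\alpha)}$, using $\|G_\alpha\|_{\rmL^1}=1$ and the elementary inverse estimate for $\tau$-piecewise constants, followed by the same contraction properties of the projection and the Hilbert transform on $\rmL^2(\bbR,X^\alpha)$.
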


Our next task is to get the lower bound on $I_1 + I_2 + I_3 + I_4$.
\paragraph{Estimating $I_1$.}

\begin{align}
I_1 & = \int_{[0, T - \tau]} \beta \langle \dot u , \calH u + \lambda u \rangle,\\
& = \beta |u|_{\rmH^{1/2}(O)}^2 - \beta \int_{[T - \tau, T]} \langle \dot u , \calH u \rangle +  \frac{\lambda \beta}{2} \|u(T - \tau)\|_0^2.
\end{align}
In this equation we remark that:
\begin{align}
|\int_{[T - \tau, T]} \langle \dot u , \calH u \rangle | & = \int_{[T - \tau, T]} |\langle \frac{u(T- \tau)}{\tau} , \calH u \rangle |,\\
& \leq \frac{1}{2 \epsilon} \|u(T- \tau)\|_0^2 + \frac{\epsilon}{2 \tau} \int_{[T - \tau, T]} \|\calH u\|_0^2.
\end{align}
Moreover, Propositions \ref{prop:linf}, \ref{prop:linfh} give a constant $C_1$ so that: 
\begin{align} 
\frac{1}{\tau} \int_{[T - \tau, T]} \|\calH u\|_0^2 & \leq \| \calH u \|_{\rmL^{\infty}(O)}^2,\\
& \leq C_1 |\log (\tau)|^{3} \| u \|_{\rmH^{1/2}(O)}^2.
\end{align}
Choosing: 
\begin{equation}
\epsilon = \frac{1}{C_1 C_2 |\log(\tau)|^{3}},
\end{equation}
we get:
\begin{align} 
|\int_{[T - \tau, T]} \langle \dot u , \calH u \rangle | & \leq \frac{C_1C_2|\log(\tau)|^{3} }{2}  \|u(T-\tau)\|_0^2 + \frac{1}{2 C_2} \| u\|_{\rmH^{1/2}(O)}^2.
\end{align}

All in all, we get:
\begin{align}
I_1/\beta  \geq |u|_{\rmH^{1/2}(O)}^2 - \frac{1}{2C_2} \|u\|_{\rmH^{1/2}(O)}^2 + \frac{\lambda - C_1C_2|\log(\tau)|^{3}}{2} \|u(T- \tau)\|_0^2.
\end{align}

In the following we suppose that $\lambda$ satisfies:
\begin{equation}\label{eq:lambdab}
\lambda \geq 2 C_1C_2 |\log(\tau)|^{3}. 
\end{equation}

\paragraph{Estimating $I_2$.}

Integration by parts, using (\ref{eq:abv3}) gives:
\begin{equation}
I_2 = \frac{\beta}{\exp(\frac{\beta \tau}{\alpha}) -1} \langle u(T-\tau), v_2(T-\tau) \rangle. 
\end{equation}
Using also Propositions \ref{prop:linf}, \ref{prop:linfh}, we deduce:
\begin{align}
|I_2| & \cleq \frac{1}{\exp(\frac{\beta \tau}{\alpha}) -1} \| u(T-\tau)\|_0 \|v_2 (T- \tau) \|_0,\\
      & \cleq \frac{1}{\exp(\frac{\beta \tau}{\alpha}) -1} \| u(T-\tau)\|_0 |\log(\tau)|^{3/2}\lambda  \|u\|_{\rmH^{1/2}(O)},\\
      & \cleq \frac{\lambda}{\exp(\frac{\beta \tau}{\alpha}) -1}  ( |\log(\tau)|^{3} \| u(T-\tau)\|_0^2  +  \|u\|_{\rmH^{1/2}(O)}^2 ).
\end{align}

Suppose we have an inequality:
\begin{equation}\label{eq:alphab}
\alpha \leq \frac{\beta \tau}{|\log(\tau)|},
\end{equation}
then we have:
\begin{equation}
\frac{1}{\exp(\frac{\beta \tau}{\alpha}) -1} \leq \frac{\tau}{1 - \tau}. 
\end{equation}
In the following we suppose also that we have an inequality:
\begin{equation}\label{eq:lambdau}
\lambda \leq \frac{1}{C \tau},
\end{equation}
for some large enough $C$.

Then we deduce that $I_2$ does not deteriorate the estimate for $I_1$. That is, assuming (\ref{eq:lambdab}), (\ref{eq:lambdau}) and (\ref{eq:alphab}), and arbitrarily large $C$, we get an estimate:
\begin{equation}
I_1 + I_2 \cgeq |u|_{\rmH^{1/2}(O)}^2 - \frac{1}{C} \|u\|_{\rmL^2(O)}^2 + \lambda \|u(T- \tau)\|_0^2.
\end{equation}

\paragraph{Estimating $I_3$.}
We write:
\begin{align}
I_3 = \int_{[0, T - \tau]} a^\alpha(u, v_1) + \int_{[0, T - \tau]} a^\alpha(u,v_2 - v_1).
\end{align}
For the first term on the right hand side:
\begin{align}
\int_{[0, T - \tau]} a^\alpha(u, v_1)  &= \lambda \int_{[0, T - \tau]} a^\alpha(\overline u,\overline u) + \int_{[0, T - \tau]} a^\alpha(\overline u,\calH u).
\end{align}
Here we remark that:
\begin{equation}
\int_{[0, T - \tau]} a^\alpha(\overline u,\overline u) \cgeq  \|\overline u\|_{\rmL^{2}(X^\alpha)}^2,
\end{equation}
and that:
\begin{equation}
|\int_{[0, T - \tau]} a^\alpha(\overline u,\calH u)| \cleq \| \overline u\|_{\rmL^{2}(X^\alpha)} \| u\|_{\rmL^{2}(X^\alpha)}.
\end{equation}
For the second term we have:
\begin{equation}
| \int_{[0, T - \tau]} a^\alpha(u,v_2 - v_1)| \cleq \|u\|_{\rmL^2(X^\alpha)} \| v_2 - v_1\|_{\rmL^2(X^\alpha)}.
\end{equation}
Now, according to Proposition \ref{prop:err}, under assumption (\ref{eq:alphab}), we get estimates, for arbitrarily large $C$:
\begin{equation}
\| v_2 - v_1\|_{\rmL^2(X^\alpha)} \leq \frac{1}{C} \|v_1\|_{\rmL^2(X^\alpha)}.
\end{equation}
We combine this with the estimate:
\begin{equation}
\|v_1\|_{\rmL^2(X^\alpha)} \cleq  \| u \|_{\rmL^2(X^\alpha)} + \lambda \| \overline u \|_{\rmL^2(X^\alpha)}.
\end{equation}

All in all we deduce that for arbitrarily large $C$ we may get an estimate:
\begin{equation}
I_3 \cgeq \lambda \|\overline u\|_{\rmL^{2}(X^\alpha)}^2 - \frac{1}{C} \|u\|_{\rmL^{2}(X^\alpha)}^2.
\end{equation}

\paragraph{Estimating $I_4$.}
We use the explicit formula (\ref{eq:expl}). We compute:
\begin{equation}
\int_{[T-\tau, T]} \frac{\exp(\frac{\beta(T - t)}{\alpha}) -1}{\exp(\frac{\beta \tau}{\alpha}) -1}\rmd t = \frac{\alpha}{\beta} - \frac{\tau}{\exp(\frac{\beta \tau}{\alpha}) -1}.
\end{equation}

We therefore have:
\begin{align}
|I_4| & \cleq \alpha \|u(T -\tau)\|_{X^\alpha} \|v_3(T -\tau)\|_{X^\alpha}.
\end{align}
Here we substitute:
\begin{align}
\|v_3(T -\tau)\|_{X^\alpha} & \cleq \| v_1\|_{\rmL^\infty(X^\alpha)}, \\
& \cleq \|\calH u\|_{\rmL^\infty(X^\alpha)} + \lambda \|\overline u\|_{\rmL^\infty(X^\alpha)} ,\\
& \cleq |\log(\tau)|  \|u\|_{\rmL^{\infty}(X^\alpha)} +  \lambda \|\overline u\|_{\rmL^\infty(X^\alpha)} ,\\
& \cleq \tau^{-1/2} \left(|\log(\tau)|  \|u\|_{\rmL^{2}(X^\alpha)} + \lambda \| \overline u\|_{\rmL^{2}(X^\alpha)}\right).
\end{align}
Therefore:
\begin{align}
| I_4 |&\cleq   \frac{\alpha}{   \tau} |\log(\tau)|  \|u\|_{\rmL^{2}(X^\alpha)}^2      +        \frac{\alpha^{3/2}}{\sigma \tau^{1/2}} \lambda \|u(T-\tau)\|_0 \| \overline u\|_{\rmL^{2}(X^\alpha)}.
\end{align}
Now we strengthen hypothesis (\ref{eq:alphab}) to the following one:
\begin{equation}\label{eq:alphabs}
\alpha \leq \frac{\min\{\tau, \sigma\} }{C|\log(\tau)|},
\end{equation}
for a large constant $C$. We deduce:
\begin{align}
|I_4 |&\cleq \frac{1}{C}  \|u\|_{\rmL^{2}(X^\alpha)}^2 + \frac{\lambda}{C^{3/2}} (\|u(T-\tau)\|_0^2 +  \| \overline u\|_{\rmL^{2}(X^\alpha)}^2).\label{eq:i4e}
\end{align}
For a large enough $C$, $|I_4|$ is dominated by (any fraction of) $I_1 + I_3$.

\paragraph{Combination of estimates.}
All in all we get:
\begin{align}
I_1 + I_2 + I_3 + I_4 & \cgeq |u|_{\rmH^{1/2}(O)}^2  + \lambda \|u(T- \tau)\|_0^2 + \lambda \|\overline u\|_{\rmL^{2}(X^\alpha)}^2  \\
&\qquad  - \frac{1}{C} \|u\|_{\rmL^2(O)}^2 - \frac{1}{C} \|u\|_{\rmL^{2}(X^\alpha)}^2.
\end{align}
Combined with Lemma \ref{lem:equiv} we deduce in particular:
\begin{align}
I_1 + I_2 + I_3 + I_4 & \cgeq |u|_{\rmH^{1/2}(O)}^2  + \| u\|_{\rmL^{2}(X^\alpha)}^2.
\end{align}
Recalling (\ref{eq:lambdab}) we also have:
\begin{align}
|v_3|_{\rmH^{1/2}(O)} + \| v_3 \|_{\rmL^{2}(X^\alpha)} & \cleq C(\alpha,\tau)   ( |u|_{\rmH^{1/2}(O)}  + \| u\|_{\rmL^{2}(X^\alpha)}),\\
C(\alpha, \tau) & = |\log(\tau)|^{7/2}|\log(\alpha)|^{1/2}.
\end{align}

Summing up we get:
\begin{theorem}
The convection diffusion equation (\ref{eq:convdiff}), discretized by the above Petrov-Galerkin method on quasi uniform grids, assuming a condition (\ref{eq:alphabs}), satisfies a uniform discrete inf-sup condition up to logarithmic terms in $\alpha$ and $\tau$. 
\end{theorem}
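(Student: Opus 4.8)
The plan is to apply Proposition \ref{prop:isa}, taking $X$ to be the trial space $Z^1_\tau \otimes X_\sigma$, $Y$ the test space $Z^1_\tau(\alpha)\otimes X_\sigma$, $a$ the bilinear form on the left hand side of (\ref{eq:cdvar}) (with $\gamma = 1$ as fixed in this section), and $A: u \mapsto v_3$ the four-step construction described above. I would equip both spaces with the norm whose square is $|u|_{\rmH^{1/2}(O)}^2 + \|u\|_{\rmL^2(X^\alpha)}^2$ (the $\rmH^{1/2}_{00}$ variant on the trial side), noting that on quasi uniform grids Lemma \ref{lem:equiv} lets me replace $\|u\|_{\rmL^2(X^\alpha)}$ by $\|\overline u\|_{\rmL^2(X^\alpha)}$ up to constants, since $\tau \cleq \sigma$ and $\alpha \cleq \sigma$ follow from quasi-uniformity together with (\ref{eq:alphabs}). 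First I would fix $\lambda$ inside the admissible window: for $\tau$ small, the lower bound (\ref{eq:lambdab}), $\lambda \cgeq |\log(\tau)|^{3}$, and the upper bound (\ref{eq:lambdau}), $\lambda \cleq 1/\tau$, are simultaneously satisfiable, so I pick $\lambda \ceq |\log(\tau)|^{3}$; the hypothesis (\ref{eq:alphab}) (hence its strengthening (\ref{eq:alphabs})) guarantees the exponential prefactors below are tame.

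For the continuity hypothesis $\|Au\| \leq C_1 \|u\|$ of Proposition \ref{prop:isa}, I would invoke the ``Controlling $v_3$'' estimates: Corollary \ref{cor:comp} supplies the $\rmH^{1/2}(O)$ bound on $v_2$, the $\rmL^2(X^\alpha)$ bound is immediate from $\|G_\alpha\|_{\rmL^1} = 1$, and the correction $v_2 - v_3$ is handled via Propositions \ref{prop:wh} and \ref{prop:diffnorm} combined with the $\rmL^\infty$-in-time bounds of Propositions \ref{prop:linf} and \ref{prop:linfh} applied to $v_1 = \overline{\calH u + \lambda u}$. Collecting these yields $\|v_3\|_Y \cleq C(\alpha,\tau)\|u\|_X$ with $C(\alpha,\tau) = |\log(\tau)|^{7/2}|\log(\alpha)|^{1/2}$, so $C_1 = C(\alpha,\tau)$.

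For the coercivity hypothesis $|a(u, Au)| \cgeq \|u\|^2$, I would use the split $a(u,v_3) = I_1 + I_2 + I_3 + I_4$ and the term-by-term bounds established above. The term $I_1$ delivers $\beta |u|_{\rmH^{1/2}(O)}^2$ plus a favourable $\lambda\|u(T-\tau)\|_0^2$ minus an arbitrarily small multiple of $\|u\|_{\rmH^{1/2}(O)}^2$; $I_3$ delivers $\lambda\|\overline u\|_{\rmL^2(X^\alpha)}^2$ minus a small multiple of $\|u\|_{\rmL^2(X^\alpha)}^2$ (using Proposition \ref{prop:err} for $\|v_2 - v_1\|_{\rmL^2(X^\alpha)}$); and under (\ref{eq:alphabs}) the outflow-layer remainders $I_2$ and $I_4$ are dominated by arbitrarily small fractions of $I_1 + I_3$. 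Summing and invoking Lemma \ref{lem:equiv} gives $a(u,v_3) \cgeq |u|_{\rmH^{1/2}(O)}^2 + \|u\|_{\rmL^2(X^\alpha)}^2 = \|u\|_X^2$, so $C_2 \ceq 1$. Proposition \ref{prop:isa} then produces the inf-sup constant $\cgeq 1/(C_1 C_2)$, deteriorating only by the logarithmic factor $|\log(\tau)|^{7/2}|\log(\alpha)|^{1/2}$; continuity of $a$ itself in these norms follows from the remark on continuity of $(u,v)\mapsto \int u\dot v$ on $\rmH^{1/2}_{00}\times\rmH^{1/2}$ and boundedness of $a^\alpha$ in the $\rmL^2(X^\alpha)$ norms.

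The main obstacle is the coupling of the parameter constraints with the boundary-layer terms $I_2$ and $I_4$: one needs $\lambda$ large (order $|\log(\tau)|^{3}$) to absorb the $\rmL^\infty$-in-time bounds for $\calH u$ coming from Propositions \ref{prop:linf}--\ref{prop:linfh}, yet $\lambda$ small (order $1/\tau$) and $\alpha$ very small relative to $\min\{\tau,\sigma\}/|\log(\tau)|$ so that the prefactors $1/(\exp(\beta\tau/\alpha)-1)$ and $\alpha/\tau$ appearing in $I_2$ and $I_4$ become negligible. Checking that (\ref{eq:alphabs}) indeed opens a nonempty window for $\lambda$, and that every remainder stays strictly below the good terms $I_1$ and $I_3$ for all admissible $\alpha,\tau,\sigma$, is the delicate bookkeeping at the heart of the argument.
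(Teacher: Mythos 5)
Your proposal is correct and follows essentially the same route as the paper: the appendix theorem is precisely the assembly, via Proposition \ref{prop:isa}, of the four-step test function $v_3$, the ``Controlling $v_3$'' continuity bound with $C(\alpha,\tau)=|\log(\tau)|^{7/2}|\log(\alpha)|^{1/2}$, the $I_1+I_2+I_3+I_4$ lower bound under (\ref{eq:lambdab}), (\ref{eq:lambdau}), (\ref{eq:alphabs}), and Lemma \ref{lem:equiv}. Your explicit choice $\lambda \ceq |\log(\tau)|^3$ is a harmless specialization of the paper's admissible window and changes nothing in the bookkeeping.
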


\end{document}